\newtheorem{theorem}{Theorem}[section]
\newtheorem{proposition}[theorem]{Proposition}
\newtheorem{lemma}[theorem]{Lemma}
\theoremstyle{definition}
\newtheorem{remark}[theorem]{Remark}
\def\!{\mathop{\mathrm{!}}}
\def\x{\textbf{x}}
\def\f{\bar{f}}
\newlength{\boxwidth}
\author[1]{N. Balabanova}
\author[1]{M. H. Duong}
\author[2]{T. A. Han}
\affil[1]{School of Mathematics, University of Birmingham, UK.}
\affil[2]{School of Computing and Digital Technologies, Teesside University, UK. }
\title{Replicator-mutator dynamics for public goods games with institutional incentives}
\begin{document}
\maketitle
\begin{abstract}
Understanding the emergence and stability of cooperation in public goods games is  important due to its applications in fields such as biology, economics, and social science. However, a gap remains in comprehending how mutations, both additive and multiplicative, as well as institutional incentives, influence these dynamics. 
In this paper, we study the replicator-mutator dynamics, with combined additive and multiplicative mutations, for public goods games both in the absence or presence of institutional incentives.  For each model, we identify the possible number of (stable) equilibria, demonstrate their attainability, as well as analyse their stability properties. We also characterise the dependence of these equilibria on the model's parameters via bifurcation analysis and asymptotic behaviour. Our results offer rigorous and quantitative insights into the role of institutional incentives and  the effect of combined additive and multiplicative mutations on the evolution of cooperation in the context of public goods games. \end{abstract}
\tableofcontents

\section{Introduction} 


\subsection{The replicator-mutator dynamics}
The replicator-mutator dynamics, a fundamental model in evolutionary game theory, is a system of differential equations describing the evolution dynamics in a population of multiple co-present strategies where selection and mutation are both captured. It provides a powerful mathematical framework for the modelling and analysis of complex biological, economical and social systems.  It has been employed in the study of, among other applications, population genetics \cite{Hadeler1981}, autocatalytic reaction networks
\cite{StadlerSchuster1992}, language evolution
\cite{Nowaketal2001}, the evolution of cooperation \cite{Imhof-etal2005} and dynamics of behavior in social networks \cite{Olfati2007}. 

To describe the model, we consider an infinite population consisting of $n$ types/strategies $S_1,\cdots, S_n$ whose frequencies are, respectively, $x_1,\cdots, x_n$. These types undergo selection; that is, the reproduction rate of each type, $S_i$, is determined by its fitness or average payoff, $f_i$, which is obtained from interacting with other individuals in the population. The interaction of the individuals in the population is carried out within randomly selected groups of $d$ participants (for some integer $d$). That is, they play and obtain their payoffs from a $d$-player game, defined by a payoff matrix. Furthermore, they are also subjected to \textit{multiplicative and additive mutations}. The multiplicative mutation is described by  an $n\times n$ row-stochastic matrix (known as mutation matrix) $Q=(q_{ji}), j,i\in\{1,\cdots,n\}$, that is
\begin{equation}
\sum_{j=1}^n q_{ji}=1, \quad 1\leq i\leq n.    
\label{eq: sum of qij}
\end{equation}
The entry $q_{ji}$ often models the probability that the offsprings of the $j$-th type that resembles the $i$-th type due to error in the replication process. On the other hand, the additive mutation is characterized by a non-negative rate, $\mu$, at which a player changes its strategy.



The resulting replicator-mutator dynamics, which incorporate both additive and multiplicative mutations, is then given by, see e.g. \cite{nowak2001evolution,Komarova2001JTB,Komarova2004, Komarova2010, Pais2012, mittal2020,chakraborty2024replicator, mukhopadhyay2021chaos} 
\begin{equation}
\label{eq: RME}
\dot{x}_i=\sum_{j=1}^n x_j f_j(\x)q_{ji}- x_i \f(\x)-\mu (n x_i-1),\qquad  i=1,\ldots, n,
\end{equation}
where $\x = (x_1, x_2, \dots, x_n)$ and $\f(\x)=\sum_{i=1}^n x_i f_i(\x)$ denotes the average fitness of the whole population.  The replicator dynamics is a special instance of \eqref{eq: RME} when the mutation matrix is the identity matrix, $Q=I$, and there is no additive mutation $\mu=0$. Note that the specific form of \eqref{eq: RME} ensures that the simplex
\[
S_n:=\{x=(x_1,\ldots, x_n)\in \mathbb{R}^n: \sum_{i=1}^nx_i=1\quad\text{and}\quad x_i\geq 0 \quad\text{for}\quad i=1,\ldots, n\}
\]
is invariant under \eqref{eq: RME} provided that initially it belongs to this set (see Section \ref{sec: invariant} for a proof), thus it is biologically consistent since these strategies comprise the whole population.  In the next section, we employ the replicator-mutator dynamics \eqref{eq: RME} in the study of evolution and promotion of cooperation, where the players involve in the so-called Public Goods Games.
\subsection{Public Goods Games and the evolution of cooperation}
The evolution of cooperation is a central question in evolutionary biology, economics, and social science. Public Goods Games (PGGs) provide a powerful framework to study this phenomenon by modelling situations where individuals must choose between contributing to a collective resource or free-riding on the contributions of others. These games capture the fundamental social dilemma where individual rationality leads to collective sub-optimality \cite{hardin:1968mm}. In a typical PGG, each participant decides how much to contribute to a shared resource. The total contributions are multiplied by a factor greater than one, representing the benefits of cooperation, and then distributed equally among all players, regardless of their individual contributions \cite{ledyard:1995aa}. While cooperation benefits the group, individuals have an incentive to withhold contributions to maximize personal gain, leading to the risk of cooperation breakdown \cite{olson:1971aa}.

PGGs are essential for understanding the mechanisms that sustain cooperation in human societies and biological populations. Several mechanisms supporting cooperation have been recognized such as direct and indirect reciprocity (cooperation can evolve when individuals interact repeatedly \cite{hilbe2018indirect,xia2023reputation} or when reputation plays a role \cite{nowak1998evolution,xia2023reputation}), network and group structures (the spatial arrangement of individuals can promote cooperation by enabling clusters of cooperators to form and persist \cite{santos2005scale,szabo2007evolutionary}) and institutional incentives (the introduction of costly punishment for defectors or rewards for contributors can sustain cooperation)  \cite{sigmund2001reward,han2024evolutionary,chen2015first,duong2021cost}. The latter mechanism is the focus of the present paper.

\subsection{Literature Review}
As has been mentioned earlier, the replicator-mutator dynamics arise naturally in the study of many complex systems and phenomena in evolutionary biology, social sciences and evolution of languages. Thus, there is now a large body of research on this important model, see for instance the aforementioned papers \cite{nowak2001evolution,Komarova2001JTB,Komarova2004, Komarova2010, Pais2012,mittal2020, mukhopadhyay2021chaos,chakraborty2024replicator} for deterministic games (the last three papers capture both additive and multiplicative mutations) and \cite{DuongHan2019,DuongHanDGA2020,DuongHan2021,chen2024number} for random games, and references therein. We also refer the reader to some recent papers that extend the replicator-mutator dynamics, such as \cite{bauer2019stabilization} to multi-population models and \cite{du2024replicator} to take into account the environmental feedbacks.

In the following, we will review in more detail the most relevant papers that employ the replicator-mutator dynamics with institutional incentives.  

In \cite{dong2019competitive}, the authors evaluate the effectiveness of three institutional incentives—reward, punishment, and a combination of both—by examining group fitness at stable evolutionary equilibria. Their results show that the optimal incentive depends on decision errors (modelling as additive mutation). Without errors, a mix of reward and punishment maximizes cooperation and fitness. However, as errors increase, reward becomes the most effective, while punishment should be avoided. The paper \cite{jia2024synergistic} investigates the synergistic effects of global exclusion and mutation on replicator dynamics of public cooperation. It shows that the replicator-mutation dynamics can result in either a global stable coexistence or two local stable coexistences, whose attraction basins are separated by an unstable fixed point, between global exclusion and defection, as well as several types of bifurcations. In \cite{chiba2024can}, the authors study the evolution of cooperation in the presence of an institution that is autonomous, in the sense that it has its own interests that may or may not align with those of the population. By comparing the efficiencies of three types of institutional incentives (namely, reward, punishment, and a mixture of reward and punishment) and analysing the group fitness at the stable equilibria of evolutionary dynamics, the authors highlight the potential benefits of institutional wealth redistribution.

    
\subsection{Summary of main results}
The key novelty of the present work is a mathematically rigorous and quantitative characterisation of how institutional incentives and of the combination of additive and multiplicative mutations influence the evolution  of cooperation in the context of the PGGs. This research goes beyond the existing aforementioned works in this direction \cite{dong2019competitive,jia2024synergistic,chiba2024can}, which  take consider only additive mutations.

More precisely, to highlight the role of institutional incentive, we consider the replicator-mutator dynamics both in the absence or presence of the incentive. For each model, we identify the possible number of (stable and unstable) equilibria, demonstrate their attainability, as well as analyse their stability properties. We also characterise the dependence of these equilibria on the model's parameters via bifurcation analysis and asymptotic behaviour. A thorough mathematical analysis of these models is highly non-trivial since they are nonlinear differential equations and contain many biologically relevant parameters. We overcome the challenge by making use of the symmetries arising from the models. Precise statements of the main results will be presented in Section \ref{sec: results}.

\subsection{Organisation of the paper} 
The rest of the paper is organised as follows. In Section \ref{sec: models}, we describe the models and methods. In Section \ref{sec: results}, we present the main results and their interpretations. A summary of the paper and further discussion on future research are given in Section \ref{sec: summary}. Finally, detailed and technical proofs are postponed to the appendix.
\section{Models and Methods}
\label{sec: models}
In this section, we describe the mathematical models that will be studied. We start by applying the general replicator-mutator dynamics \eqref{eq: RME} to obtain a general multi-player two-strategy games, which will be used to study the evolution of cooperation (and defection). We then further employ this framework to public goods games with and without institutional incentives. 

\subsection{Muti-player two-strategy games}
In this section, we focus on the replicator-mutator equation for $d$-player two-strategy games with a symmetric mutation matrix $Q=(q_{ji})$ (with  $j,i\in\{1,2\}$) so that
\[
q_{11}=q_{22}=1-q \quad \text{and}\quad q_{12}=q_{21}=q,
\]
for some constant $0\leq q\leq 1/2$. Let $x$ be the frequency of $S_1$ (cooperation). Thus the frequency of $S_2$ (defection) is $1-x$. The interaction of the individuals in the population is in randomly selected groups of $d$ participants, that is, they play and obtain their fitness from  $d$-player games. Let $a_k$ (resp., $b_k$) be the payoff of an $S_1$-strategist (resp., $S_2$) in a group  containing  other $k$ $S_1$ strategists (i.e. $d-1-k$ $S_2$ strategists). Here we consider symmetric games where the payoffs do not depend on the ordering of the players. In this case, the average payoffs of $S_1$ and $S_2$ are, respectively 
\begin{equation}
\label{eq: fitness}
f_1(x)= \sum\limits_{k=0}^{d-1}a_k\begin{pmatrix}
d-1\\
k
\end{pmatrix}x^k (1-x)^{d-1-k}\quad\text{and}\quad
f_2(x)= \sum\limits_{k=0}^{d-1}b_k\begin{pmatrix}
d-1\\
k
\end{pmatrix}x^k (1-x)^{d-1-k}.
\end{equation} 
The replicator-mutator equation \eqref{eq: RME} then becomes
\begin{align}
\label{eq: RME2}
\dot{x}&=x f_1(x)(1-q)+(1-x) f_2(x)q-x(x f_1(x)+(1-x)f_2(x))-\mu(2x-1)\notag
\\&=q\Big[(1-x)f_2(x)-x f_1(x)\Big]+x(1-x)(f_1(x)-f_2(x))-\mu(2x-1).
\end{align}
Note that when $q=0$ and $\mu=0$, we recover the  replicator equation (i.e. without mutation). In contrast to the replicator equation, $x=0$ and $x=1$ are no longer equilibrium points of the system for $q\neq 0$ or $\mu\neq 0$. In addition, from \eqref{eq: RME2} it follows that, if $q=\frac{1}{2}$ then $x=\frac{1}{2}$ is always an equilibrium point.

Equilibrium points are those points $0\leq x\leq 1$ that make the right-hand side of \eqref{eq: RME2} vanish, that is
\begin{equation}
\label{eq: equilibria1}
G(x):=q\Big[(1-x)f_2(x)-x f_1(x)\Big]+x(1-x)(f_1(x)-f_2(x))-\mu(2x-1)=0.
\end{equation}
Using \eqref{eq: fitness}, $G$ becomes 
\begin{equation}
\label{eq: G}
G(x)=G_1(x)-\mu(2x-1),
\end{equation}
where
\begin{equation}
\label{eq: equilibria2}
G_1(x)=\sum_{k=0}^{d-1}\begin{pmatrix}
d-1\\
k
\end{pmatrix}x^k (1-x)^{d-1-k}\Big[a_k x(1-x-q)-b_k(1-x)(x-q)\Big].
\end{equation}
In the next sections, we consider Public Goods Games (PGGs) with and without institutional incentives where explicit formulas for the payoff entries $\{a_k, b_k\}_{k=0}^{d-1}$ will be given. 
\subsection{PGGs without institutional incentives}
\label{sec: no institutional incentive}
We start with the case of PGGs without institutional incentives. We then study the combined influence of the additive and multiplicative mutations on the equilibrium properties of the replicator-mutator dynamics.

Let $c$ be the cost of contribution, and $r$ be the multiplication factor ($1 < r < d$) \cite{hauert:2007aa}. For $0\leq k\leq d-1$, let $a_k$ and $b_k$ be respectively the payoffs of a C  and a D player when interacting with a group that consist of $k$ C players and $d-1-k$ D players. Then
\begin{equation}
\label{eq: payoff-no-incentive}
 a_k = \frac{(k+1)cr}{d} - c, \quad b_k=\frac{k r c}{d}.   
\end{equation}
Let $x$ be the frequency of cooperation. Using these expressions, \eqref{eq: equilibria2} become
\begin{align}
\label{eq: equilibria3}
 G_1(x)&=\frac{c}{d} \sum_{k=0}^{d-1}\begin{pmatrix}
d-1\\
k
\end{pmatrix}x^k (1-x)^{d-1-k}\Big[krq(1-2x)+(r-d)x(1-x-q)\Big]\notag
\\&=\frac{c}{d}\Bigg(rq(1-2x)\sum_{k=0}^{d-1}k\,\begin{pmatrix}
d-1\\
k
\end{pmatrix}x^k (1-x)^{d-1-k}+(r-d)x(1-x-q)\sum_{k=0}^{d-1}\begin{pmatrix}
d-1\\
k
\end{pmatrix}x^k (1-x)^{d-1-k}\Bigg)\notag
\\&=\frac{c}{d}\Bigg( rq(1-2x)\sum_{k=0}^{d-1}k\,\begin{pmatrix}
d-1\\
k
\end{pmatrix}x^k (1-x)^{d-1-k}+(r-d)x(1-x-q)\Bigg),
\end{align}
since 
\[
\sum_{k=0}^{d-1}\begin{pmatrix}
d-1\\
k
\end{pmatrix}x^k (1-x)^{d-1-k}=(x+(1-x))^{d-1}=1.
\]
In addition, using
\[
k\,\begin{pmatrix}
d-1\\
k
\end{pmatrix}=(d-1)\begin{pmatrix}
d-2\\
k-1
\end{pmatrix}
\]
We can simplify the first summation in \eqref{eq: equilibria3} as
\begin{align*}
\sum_{k=0}^{d-1}k\,\begin{pmatrix}
d-1\\
k
\end{pmatrix}x^k (1-x)^{d-1-k}&=(d-1)   \sum_{k=1}^{d-1}\begin{pmatrix}
d-2\\
k-1
\end{pmatrix}x^k (1-x)^{d-1-k} 
\\&=(d-1)x   \sum_{k=1}^{d-1}\begin{pmatrix}
d-2\\
k-1
\end{pmatrix}x^{k-1} (1-x)^{d-1-k} 
\\&=(d-1)x   \sum_{k=0}^{d-2}\begin{pmatrix}
d-2\\
k
\end{pmatrix}x^{k} (1-x)^{d-2-k}
\\&=(d-1)x(x+(1-x))^{d-2}
\\&=(d-1)x.
\end{align*}
Thus, \eqref{eq: equilibria3} reduces to
\begin{align*}
G_1(x)&= \frac{c}{d} x\Big(rq(d-1)(1-2x)+(r-d)(1-x-q)\Big)
\\&=\frac{c}{d} x \Big(rq(d-1)+(r-d)(1-q)-x(2rq(d-1)+(r-d))\Big).
\end{align*}
Therefore,
\begin{align*}
G(x)&=G_1(x)-\mu(2x-1)
\\&=\frac{c}{d}x \Big(rq(d-1)+(r-d)(1-q)-x(2rq(d-1)+(r-d))\Big)-\mu(2x-1)
\\&=-\frac{c}{d}(2rq(d-1)+(r-d)) x^2+ x \Big[\frac{c}{d}\big(rq(d-1)+(r-d)(1-q)\big)-2\mu\Big]+\mu.
\end{align*}
Thus, the mutator-replicator dynamics for PGGs without institutional incentives is given by
\begin{equation}
 \label{eq: RME no incentive}
\dot{x}=G(x)=-\frac{c}{d}(2rq(d-1)+(r-d)) x^2+ x \Big[\frac{c}{d}\big(rq(d-1)+(r-d)(1-q)\big)-2\mu\Big]+\mu.
\end{equation}
Equilibria of this dynamics are solutions in the interval $[0,1]$ of $G(x)=0$. In Section 3.1, we will analyse the solvability, stability and bifurcation analysis of this equation.
\subsection{PGGs with institutional Incentives}

Now we consider PGGs with institutional incentives. Let $\delta>0$ be the per capita incentive. The total budget $d\delta$ is divided into two parts using a weight $0\leq \omega\leq 1$: the first part $\omega d \delta$ is equally rewarded to the cooperators increasing their payoff, while the other part $(1-\omega)d \delta$ is used for equally punished the defectors deducting their payoff. The rewarding and punishing are respectively leveraged by some  factors $a, b>0$.

Thus we obtain the following explicit formula for the payoff entries $a_k$ and $b_k$ for $k=0,\ldots, d-1$:
\begin{equation}
\label{eq: payoff-incentive}
 a_k = \frac{(k+1)cr}{d} - c+\frac{a \omega d \delta }{k+1}=a^0_k+\frac{a \omega d \delta }{k+1}, \quad b_k=\frac{k r c}{d}-\frac{b(1-\omega)d\delta}{d-k}=b^0_k-\frac{b(1-\omega)d\delta}{d-k},   
\end{equation}
where $a_k^0$ and $b_k^0$ are the corresponding payoffs when there is no incentive ($\delta=0$).

Clearly, $\delta=0$ corresponds to no-incentive studied in the previous section; pure reward and pure punishment correspond to $\omega=1$ and $\omega=0$ respectively.

Using \eqref{eq: payoff-incentive} we compute the expression inside the square bracket of the polynomial $G_1$ given in \eqref{eq: equilibria2}:
\begin{align*}
a_k x(1-x-q)-b_k (1-x) (x-q)&= \Big(a_k^0+\frac{a \omega d \delta }{k+1}\Big) x (1-x-q)-\Big(b_k^0-\frac{b(1-\omega)d\delta}{d-k}\Big) (1-x)(x-q)   
\end{align*}
Therefore $G_1(x)=G_1^0(x)+G_2(x)$, where 
\begin{align*}
G_1^0(x)&= \frac{c}{d} x\Big(rq(d-1)(1-2x)+(r-d)(1-x-q)\Big)
\\&=\frac{c}{d} x \Big(rq(d-1)+(r-d)(1-q)-x(2rq(d-1)+(r-d))\Big)
\end{align*}
(note that this is exactly is $G_1(x)$ in the case of no incentive), and
\begin{align*}
G_2(x)&=\sum_{k=0}^{d-1}\begin{pmatrix}
d-1\\
k
\end{pmatrix}x^k (1-x)^{d-1-k}\Big[\frac{a \omega d \delta }{k+1} x (1-x-q)+\frac{b(1-\omega)d\delta}{d-k}(1-x)(x-q)\Big]    
\\&:= G_3(x)+G_4(x),
\end{align*}
where
\begin{align*}
 G_3(x)&=\sum_{k=0}^{d-1}\begin{pmatrix}
d-1\\
k
\end{pmatrix}x^k (1-x)^{d-1-k}\frac{a \omega d \delta }{k+1} x (1-x-q),
\\ G_4(x)&=\sum_{k=0}^{d-1}\begin{pmatrix}
d-1\\
k
\end{pmatrix}x^k (1-x)^{d-1-k}\frac{b(1-\omega)d\delta}{d-k}(1-x)(x-q).
\end{align*}
Using the elementary identity
\[
\begin{pmatrix}
d-1\\
k
\end{pmatrix}\frac{d}{k+1}=\begin{pmatrix}
    d\\k+1
\end{pmatrix}
\]
we can transform $G_3$ as
\begin{align*}
G_3(x)&=  a \omega  \delta (1-x-q)\sum_{k=0}^{d-1}\begin{pmatrix}
d\\
k+1
\end{pmatrix}x^{k+1} (1-x)^{d-1-k}
\\&= a \omega  \delta (1-x-q)\sum_{k=1}^{d}\begin{pmatrix}
d\\
k
\end{pmatrix}x^{k} (1-x)^{d-k}
\\&=a \omega  \delta (1-x-q) \Big(1-(1-x)^d\Big),
\end{align*}
since 
\[
\sum_{k=0}^{d}\begin{pmatrix}
d\\
k
\end{pmatrix}x^{k} (1-x)^{d-k}=(x+1-x)^d=1.
\]
Similarly, using
\[
\begin{pmatrix}
d-1\\
k
\end{pmatrix}\frac{d}{d-k}=\begin{pmatrix}
d\\
k
\end{pmatrix}
\]
we have
\begin{align*}
G_4(x)&=b(1-\omega)\delta (x-q) \sum_{k=0}^{d-1}\begin{pmatrix}
d\\
k
\end{pmatrix}x^k (1-x)^{d-k}
\\&=b(1-\omega)\delta (x-q)(1-x^d).
\end{align*}
Bringing all together we have
\begin{align}
\label{eq: Gincentive}
G(x)&=G_1(x)-\mu(2x-1)=G_1^0(x)+G_3(x)+G_4(x)-\mu(2x-1)\notag
\\&=\frac{c}{d} x \Big(rq(d-1)+(r-d)(1-q)-x(2rq(d-1)+(r-d))\Big)-\mu(2x-1)\notag
\\&\qquad+a \omega  \delta (1-x-q) \Big(1-(1-x)^d\Big)+b(1-\omega)\delta (x-q)(1-x^d).
\end{align}
Thus, the replicator-mutator dynamics for PGGs with institutional incentive is given by
\begin{align}
\label{eq: RME with incentive}
\dot{x}&=G(x)\notag
\\&=\frac{c}{d} x \Big(rq(d-1)+(r-d)(1-q)-x(2rq(d-1)+(r-d))\Big)-\mu(2x-1)\notag
\\&\qquad+a \omega  \delta (1-x-q) \Big(1-(1-x)^d\Big)+b(1-\omega)\delta (x-q)(1-x^d).
\end{align}
Again, equilibria of this dynamics are roots in $[0,1]$ of the polynomial $G$.

Note that mathematically, the parameters $a$ and $b$ are redundant; one can show by solving a system of linear equations that any values of the  coefficients in front of $(1-x-q)\left(1-(1-x)^d\right)$ and $(x-q)(1-x^d)$ can be achieved by varying $\delta$ and $\omega$ -- $a$ and $b$ can be safely assumed to be equal to 1.  However, in order to preserve the model, we will not make this assumption (unless otherwise stated). Accordingly, equilibria of the replicator-mutator dynamics for PGGs with institutional incentives are roots in $[0,1]$ of the following polynomial equation:
\begin{equation}
\label{eq:g(x)=0}
    \begin{split}
&G(x)=\frac{c}{d} x \Big(rq(d-1)+(r-d)(1-q)-x(2rq(d-1)+(r-d))\Big)\\&\qquad-\mu(2x-1)
+a \omega  \delta (1-x-q) \Big(1-(1-x)^d\Big)+b(1-\omega)\delta (x-q)(1-x^d)  =0. 
    \end{split}
\end{equation}
In Section \ref{sec: results incentives}, we analyse the solvability in $[0,1]$ of this equation (thus the existence of the equilibria of the replicator-mutator dynamics), provide the stability and bifurcation analysis, reveal the role of the institutional incentive and of the combined mutations, as well as the asymptotic behaviour when the group size is sufficiently large.  
\section{Results}
\label{sec: results}
In this section, we present the main results of the present paper and discuss their biological interpretations. We start with public goods games without institutional incentives then with incentives.  For each mode, we identify the possible number of (stable and unstable) equilibria, analyse their stability properties, and investigate the dependence of these equilibria on the model's parameters via bifurcation and asymptotic analysis. To focus on the main ideas, we will postpone all the technically detailed proofs to the appendix.
\subsection{PGGs without institutional incentives}
\label{eq: result no incentives}
We recall that the replicator-mutator dynamics for PGGs is given in \eqref{eq: RME no incentive} and its equilibria are roots in $[0,1]$ of the quadratic polynomial equation
\[
G(x):= A x^2+Bx+\mu=0,
\]
where 
\begin{equation}
 \label{eq: AB}   
A:=-\frac{c}{d}(2rq(d-1)+(r-d)), \quad B:=\Big[\frac{c}{d}\big(rq(d-1)+(r-d)(1-q)\big)-2\mu\Big].
\end{equation}
\begin{figure}
    \centering
\includegraphics[scale=.57]{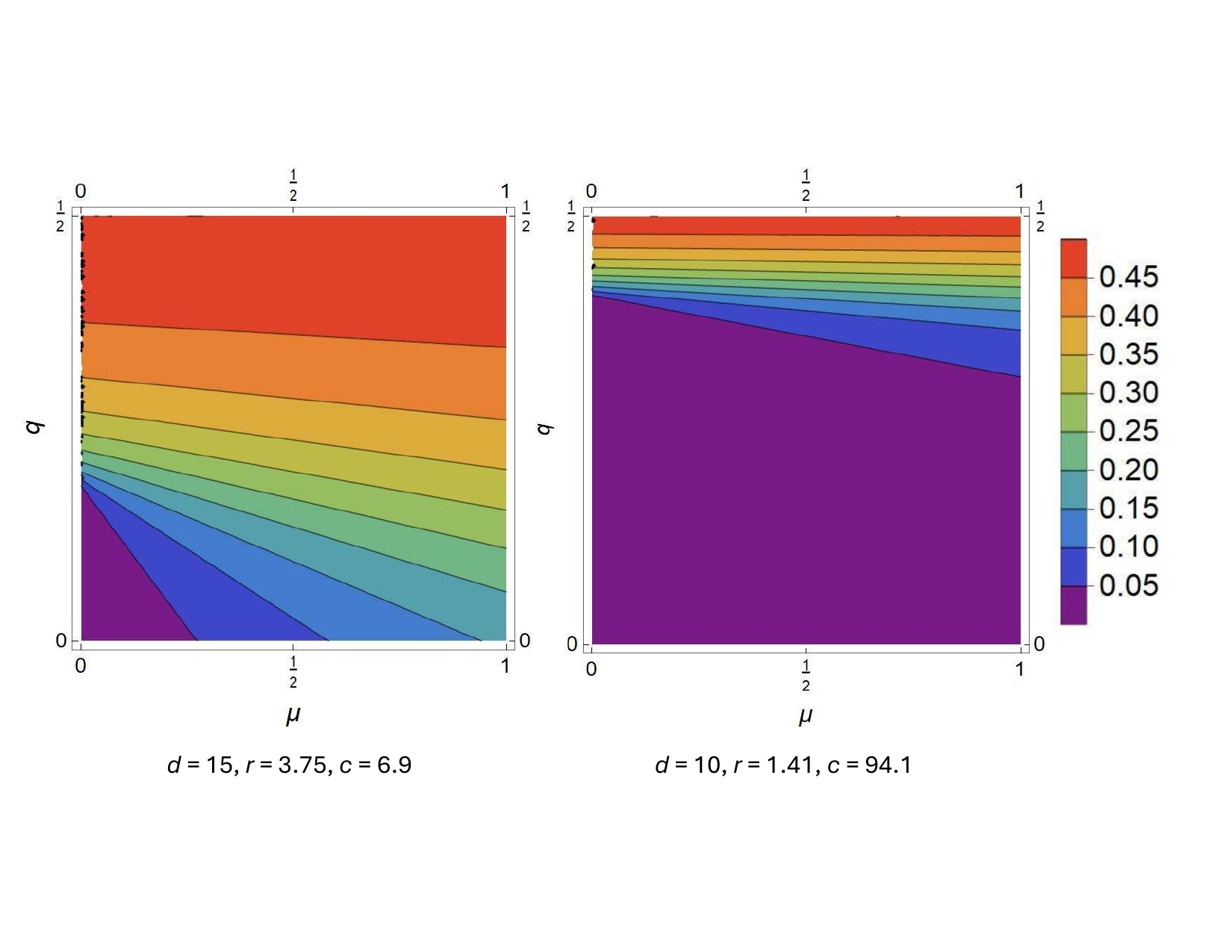}
    \caption{$x_1$ as a function of $\mu$ and $q$, for fixed values of $d, \ r$, and $c$.}
    \label{fig:value of x_1}
\end{figure}
\subsubsection{Equilibria}
Since $G$ is a quadratic polynomial, we can explicitly find its roots and determine whether a root belongs to the interval $[0,1]$ to become an equilibrium for the replicator-mutator dynamics. 
\begin{proposition}
\label{st: equilibria delta zero}
    The equation $G(x)=0$ has 
    \begin{enumerate}
        \item two equilibria at $x_0=0$ and $x_1=1$ when $\mu=q=0$;
        \item one equilibrium at $x_0=0$ and and an additional one at $x_1 = \frac{(d-2) q r+d (q-1)+r}{d (2 q r-1)-2 q r+r}$ if $q>\frac{d-r}{(d-2) r+d}$, when $q>0$, $\mu=0$. Additionally, $x_1<1/2$;
        \item one equilibrium at 
        \begin{small}
        \begin{equation}
        \label{eq: no incentive equilibrium}
            x_1=\frac{2 d \mu }{d \left(\sqrt{\frac{c^2 ((d-2) q r+d (q-1)+r)^2}{d^2}+4 c \mu  q (r-1)+4 \mu ^2}+2 \mu \right)-c ((d-2) q r+d (q-1)+r)}<\frac{1}{2}
        \end{equation}
        \end{small}
        when $\mu,q>0$. 
    \end{enumerate}
\end{proposition}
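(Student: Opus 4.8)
The three parts all concern the roots in $[0,1]$ of the single quadratic $G(x)=Ax^2+Bx+\mu$ with $A,B$ as in \eqref{eq: AB}, so the plan is to treat them by elementary but careful root analysis, exploiting the degeneracies that appear when $\mu$ or $q$ vanishes. For part (1), setting $\mu=q=0$ collapses $A$ and $B$ to $\tfrac{c}{d}(d-r)$ and $-\tfrac{c}{d}(d-r)$, so $G(x)=\tfrac{c}{d}(d-r)\,x(x-1)$; since $1<r<d$ gives $d-r>0$, the roots $0$ and $1$ are genuine and both lie in $[0,1]$. For part (2), $\mu=0$ factors $G(x)=x(Ax+B)$, so $x_0=0$ is automatic and the only candidate for a second equilibrium is $x_1=-B/A$. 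I would write $-B/A=N/D$ with $N:=(d-2)qr+d(q-1)+r$ and $D:=2(d-1)rq-(d-r)$ (matching the stated formula), and record the one key identity $D-N=qd(r-1)>0$. Since $N=[(d-2)r+d]\,q-(d-r)$, we have $N>0$ exactly when $q>\frac{d-r}{(d-2)r+d}$; because $D>N$, this simultaneously forces $D>0$, whence $x_1=N/D\in(0,1)$. Below the threshold a short sign check, comparing against the second critical value $\frac{d-r}{2(d-1)r}$ at which $D$ changes sign, shows $x_1$ is either negative or exceeds $1$, so no admissible second root exists. Finally $x_1<\tfrac12\iff 2N<D\iff 2q<1$, which holds under $q\le\tfrac12$.

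For part (3) the point is to locate and count the roots in $[0,1]$ before solving explicitly. I would evaluate $G(0)=\mu>0$ and simplify $G(1)=cq(1-r)-\mu$, which is negative because $r>1$; the intermediate value theorem then yields a root in $(0,1)$. Uniqueness is immediate from the quadratic structure: if both roots lay in $(0,1)$ then $G(0)$ and $G(1)$ would share the sign of $A$, contradicting $G(0)>0>G(1)$. Hence exactly one equilibrium lies in $(0,1)$.

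It remains to match this root to \eqref{eq: no incentive equilibrium} and to bound it. The central computation is the discriminant: I would verify the identity $B^2-4A\mu=\frac{c^2N^2}{d^2}+4c\mu q(r-1)+4\mu^2$, which is precisely the radicand in \eqref{eq: no incentive equilibrium}; the cross term collapses exactly because $-N+\bigl(2rq(d-1)+(r-d)\bigr)=dq(r-1)$. Using the rationalised form $x_1=\frac{2\mu}{-B+\sqrt{B^2-4A\mu}}$ together with $-B=2\mu-\frac{c}{d}N$ then reproduces the stated expression after clearing $d$; this is also the branch that stays finite as $A\to0$, so it is well defined across the sign change of $A$. For the bound I would compute $G(\tfrac12)=\tfrac14(A+2B+4\mu)=\frac{c}{4d}(d-r)(2q-1)$, which is negative for $q<\tfrac12$; combined with $G(0)>0$ this places the unique root in $(0,\tfrac12)$, giving $x_1<\tfrac12$ (with equality in the boundary case $q=\tfrac12$, consistent with the earlier observation that then $x=\tfrac12$ is always an equilibrium).

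The routine part is the casework and the intermediate value argument; the genuinely delicate steps are the two algebraic identities, namely $D-N=qd(r-1)$ in part (2) and the discriminant simplification in part (3), since these are what make the threshold and the closed form come out clean. The other point requiring care is getting the signs right across the two critical values of $q$ in part (2), which is what rules out a spurious second equilibrium outside $[0,1]$.
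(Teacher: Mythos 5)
Your proposal is correct, and parts (1) and (2) follow essentially the paper's route (you actually supply more detail than the paper in part (2): the identity $D-N=dq(r-1)>0$ cleanly explains why $N>0$ forces $D>0$ and hence $x_1\in(0,1)$, and why $2N<D$ reduces to $q<\tfrac12$; the paper instead just states the threshold and checks $G_\mu(\tfrac12)<0$). Where you genuinely diverge is part (3). The paper writes out \emph{both} explicit roots $x_{1},x_{2}$ of the quadratic and then devotes a separate lemma (Lemma \ref{st:x2 doesntbelong here}) to showing $x_2\notin[0,1]$, via a fairly heavy computation: differentiating $x_2(q)$, proving the derivative is positive, locating a singularity, and checking $x_2(0)>1$ and $x_2(1/2)<0$. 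You avoid all of this by first counting roots in $(0,1)$ with the endpoint signs $G(0)=\mu>0>G(1)$, and then identifying the in-range root as the rationalised branch $\tfrac{2\mu}{-B+\sqrt{B^2-4A\mu}}$, which equals $\tfrac{-B-\sqrt{\Delta}}{2A}$ for either sign of $A$ and remains finite as $A\to0$. This is shorter and makes the paper's auxiliary lemma unnecessary; your explicit verification of the discriminant identity $B^2-4A\mu=\tfrac{c^2N^2}{d^2}+4c\mu q(r-1)+4\mu^2$ (again via $D-N=dq(r-1)$) is also a detail the paper omits. The one place you are slightly informal is the branch identification: ``stays finite as $A\to0$'' is a heuristic, and to be airtight you should note (as your setup already permits) that for $A>0$ the root in $(0,1)$ is the smaller root and for $A<0$ the larger, and that in both cases this is $\tfrac{-B-\sqrt{\Delta}}{2A}$, i.e.\ the rationalised expression; this is a routine check, not a gap in the argument.
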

Note that the solution $x_1$ above is well-defined, since the term inside the square root is non-negative: the first summand is a square, as is the last; as for the middle one, we have $c>0, \mu>0,\ q>0$ and $r>1$. 
The values of $x_1$ as a function of $\mu$ and $q$,  for fixed $c,r,d$, are depicted in Figure \ref{fig:value of x_1} while numerical solutions of $x(t)$ are shown in Figure \ref{fig:solutionsnoincentiva}.
  \subsubsection{Stability and bifurcation analysis}
\subsubsection*{Stability analysis}
\begin{enumerate}
\item In absence of mutations, i.e. when $\mu=q=0$, the graph of the  function $G(x)$ is an upturned parabola; hence, $x_0$ is stable
while $x_1$ is unstable, since $G'(0)=(r-d)<0$ and $G'(1)=-(r-d) > 0$. Thus, in absence of mutations, the system dynamics always converges to full defection.
\item When  $\mu=0$,  $q>0$, $G_{\mu}(0)=0$  and $G_{\mu}(1)<0$ and the dynamics have one or two equilibria, as per Proposition \ref{st: equilibria delta zero}.

From the signs of $G_{\mu}(x)$ at the endpoints of the interval we can deduce the following:
\begin{itemize}
    \item If $q<\frac{d-r}{r(d-1)+(d-r)}$, $x=0$  is a unique, stable equilibrium.
    \item If $q>\frac{d-r}{r(d-1)+(d-r)}$, then $x=0$ is an unstable equilibrium while $x_1$ is a stable one.
\end{itemize}
\end{enumerate}
\begin{figure}
\centering
    \subfigure[$d = 7, c= 10, r = 5, q=0,\mu=0$]{\includegraphics[scale=.3]{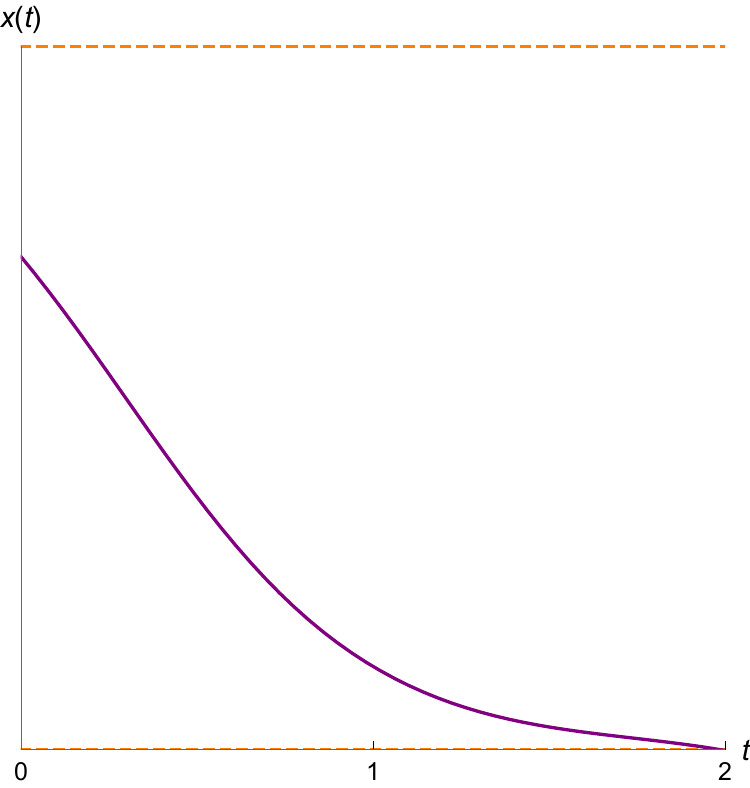}}
     \subfigure[$d = 7, c= 10, r = 5, q = 1/4, \mu=0$]{\includegraphics[scale=.345]{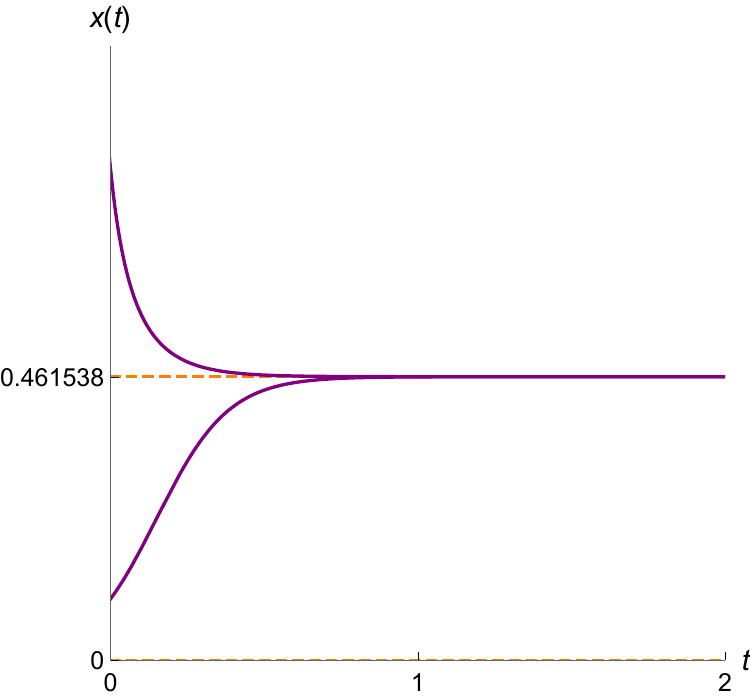}}
     \subfigure[$d = 7, c= 10, r = 5, q = 1/4, \mu=1/2$]{\includegraphics[scale=.345]{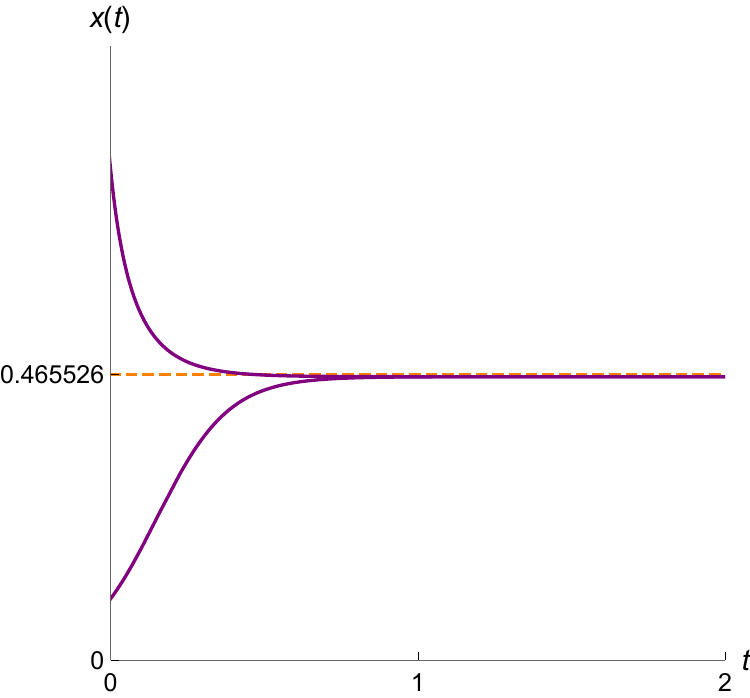}}
     \caption{Numerical solutions for PGGs without institutional incentives.}
     \label{fig:solutionsnoincentiva}
\end{figure}
\subsubsection*{Bifurcation analysis}
We study behaviour of the stable point with respect to the additive mutation $\mu$ when the multiplicative mutation $q$ is fixed.
\begin{proposition}
    \label{st: bifurcations degrre two}
\begin{enumerate}
    \item For a fixed (non-zero) value of $q$ the stable solution $x_1$ is an increasing function of $\mu$.
    \item For a fixed non-zero  $\mu$ the stable solution $x_1$ is an increasing function of $q$.
    \item $x_1$ has a limit as a function of $d$ 
    \[
    \lim\limits_{d\to\infty}x_1 = \frac{2 \mu }{\sqrt{c^2 (q r+q-1)^2+4 c \mu  q (r-1)+4 \mu ^2}-c (q r+q-1)+2 \mu }.
    \]
\end{enumerate}
\end{proposition}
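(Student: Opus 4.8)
The plan is to avoid differentiating the unwieldy closed form in \eqref{eq: no incentive equilibrium} directly, and instead exploit the quadratic structure $G(x)=Ax^2+Bx+\mu$ from \eqref{eq: AB} together with the implicit function theorem. The first step is a bookkeeping observation: expanding the two expressions shows $\frac{c}{d}\big((d-2)qr+d(q-1)+r\big)=B+2\mu$, so the quantity under the square root in \eqref{eq: no incentive equilibrium} is exactly the discriminant $\Delta:=B^2-4A\mu$ of $G$. Consequently $x_1=\frac{2\mu}{\sqrt{\Delta}-B}=\frac{-B-\sqrt{\Delta}}{2A}$, and therefore $G'(x_1)=2Ax_1+B=-\sqrt{\Delta}<0$ (independently of the sign of $A$). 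This re-confirms that $x_1$ is the stable root and, crucially, supplies a clean denominator for implicit differentiation.

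For parts (1) and (2), since $G(x_1)=0$ identically in the parameters, the implicit function theorem gives, for $\theta\in\{\mu,q\}$,
\[
\frac{\partial x_1}{\partial \theta}=-\frac{\partial_\theta G(x_1)}{G'(x_1)}=\frac{\partial_\theta G(x_1)}{\sqrt{\Delta}},
\]
so in each case it suffices to show the relevant partial derivative of $G$ is positive at $x_1$. For part (1), $G(x)=G_1(x)-\mu(2x-1)$ with $G_1$ independent of $\mu$, so $\partial_\mu G=1-2x$, which is positive at $x=x_1$ because $x_1<\tfrac12$ by Proposition \ref{st: equilibria delta zero}; hence $\partial x_1/\partial\mu>0$. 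For part (2), differentiating the coefficients in \eqref{eq: AB} gives $\partial_q A=-\tfrac{2cr(d-1)}{d}<0$ and $\partial_q B=\tfrac{c}{d}(rd+d-2r)>0$, so that $\partial_q G(x_1)=\tfrac{cx_1}{d}\big[\,rd+d-2r-2r(d-1)x_1\,\big]$; using $x_1<\tfrac12$ the bracket exceeds $rd+d-2r-r(d-1)=d-r>0$, whence $\partial x_1/\partial q>0$.

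For part (3), the limit is a direct computation on the closed form: writing $(d-2)qr+d(q-1)+r=d(qr+q-1)+r(1-2q)$, I would divide numerator and denominator of \eqref{eq: no incentive equilibrium} by $d$, so that $\tfrac{c}{d}\big((d-2)qr+d(q-1)+r\big)\to c(qr+q-1)$ and $\tfrac{c^2}{d^2}\big((d-2)qr+d(q-1)+r\big)^2\to c^2(qr+q-1)^2$ as $d\to\infty$; the remaining terms $4c\mu q(r-1)$, $4\mu^2$ and $2\mu$ are independent of $d$, and passing to the limit yields the stated expression.

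The computations in parts (1) and (3) are routine once the discriminant identity is in place; the main obstacle is the sign analysis in part (2), where $\partial_q G(x_1)$ is a genuine combination of a negative term ($\partial_q A$) and a positive term ($\partial_q B$) and is not manifestly positive. The resolution is to factor out $x_1\ge0$ and then use the a priori bound $x_1<\tfrac12$ to control the residual linear term, reducing positivity to the inequality $d>r$ guaranteed by $1<r<d$. I would also note throughout that $\Delta>0$ whenever $\mu,q>0$ (as observed in the remark following Proposition \ref{st: equilibria delta zero}), so that $\sqrt{\Delta}$ is real and the implicit function theorem applies at $x_1$.
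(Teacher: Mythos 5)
Your proposal is correct and follows essentially the same route as the paper: the implicit function theorem for parts (1) and (2), with the identical sign analysis $\partial_\mu G=1-2x_1>0$ and $\partial_q G(x_1)=\tfrac{c}{d}x_1\bigl(r(d-1)(1-2x_1)+(d-r)\bigr)>0$ via $x_1<\tfrac12$, and a direct limit of the closed form for part (3). The only difference is cosmetic: you justify $\partial_x G(x_1)<0$ by the explicit computation $G'(x_1)=-\sqrt{\Delta}$ from the discriminant identity, whereas the paper simply invokes the stability of $x_1$.
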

\begin{figure}
    \centering
 \subfigure[$d = 3,q = 0.211,b = 1,a=3.12,\delta = 7, r= 2.88,\mu = 0.23,c= 2.8,\omega = 0.07$]{\includegraphics[scale=.5]{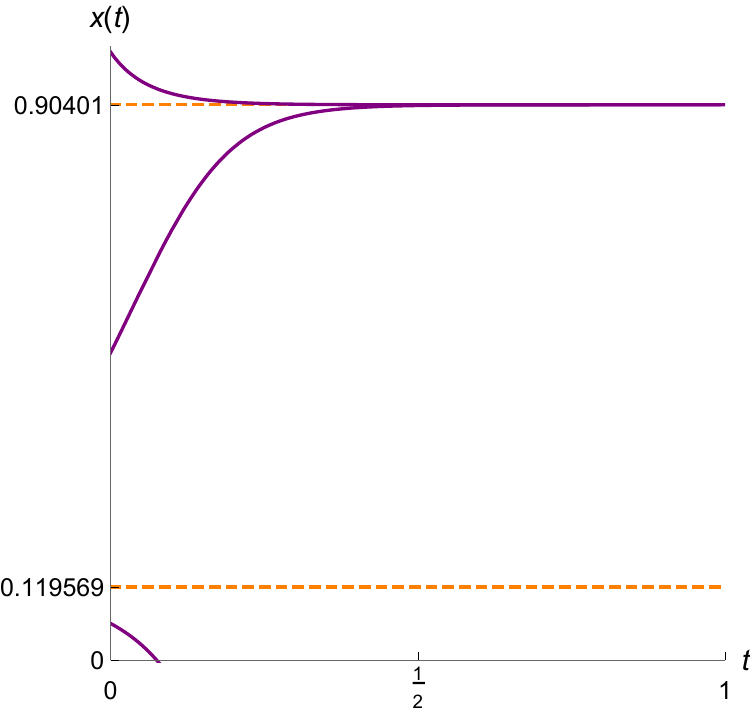}}
  \subfigure[$d=9,q= 0.211,b=0.34,a= 3.12,\delta = 6.7,r= 3.88,\mu = 0.23,c= 2.8,\omega = 0.07$]{\includegraphics[scale=.5]{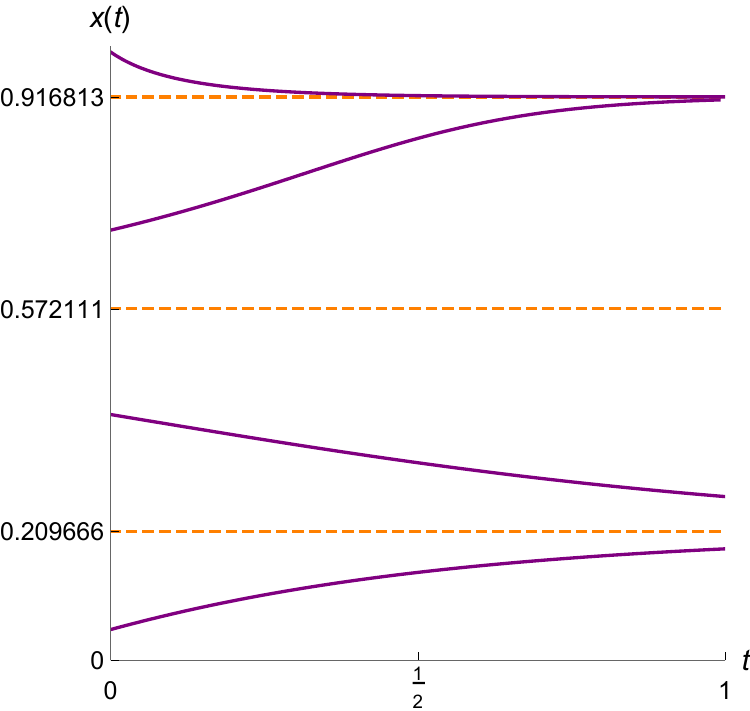}}
    \caption{Numerical solutions of $\dot{x}=G(x)$. The equilibria are given by dashed orange lines, and the non-constant solutions by purple lines.}
    \label{fig:graphs of soltutions}
\end{figure}
The above analysis demonstrates that,
in the absence of both mutations $\mu$ and $q$ (that is, for the replicator dynamics), the only equilibria are homogenous populations, with all players cooperating (unstable) or all players defecting (stable). On the other hand, even without institutional incentive, the replicator-mutator dynamics (that is, at least $q$ or $\mu$ is strictly positive) always has at least one stable equilibrium.

In the model where spontaneous mutations do not occur ($q=0$), the total cooperation strategy is an unstable equilibrium , while another equilibrium that favours defection is stable.  

When both types of mutation occur, the unique stable equilibrium still represents a population with more defectors than cooperators; however, increasing the likelihood of mutations brings the population closer to a balanced divide. 

\subsection{PGGs with institutional incentives}
\label{sec: results incentives}
\subsubsection{The number of equilibria }
We recall that the replicator-mutator dynamics for PGGs with institutional incentives is given in \eqref{eq: RME with incentive} and equilibria are roots in $[0,1]$ of the polynomial $G$, making the RHS of \eqref{eq: RME with incentive} vanishes.

In the following theorem, we show that $G$ has at most four roots, thus the replicator-mutator dynamics for PGGs with institutional incentives has at most four equilibria. This is an interesting result since $G$ is a polynomial of degree $d+1$, so in principle, it could have up to $d+1$ roots. Our proof makes use of the symmetric properties of $G$. 

For convenience, we sum up  the permitted values of the parameters in our model  below:
\begin{small}
\begin{equation}
\begin{split}
    \label{eq: table of parameters}
\delta>0, \ \ a>0, \ \ b>0,\ \  d\in\mathbb{Z}_+, d\ge 1, \ \ 1< r<d,\ \ \omega\in(0,1),\ \ q\in\left(0,\frac12\right), \ \ \mu\in(0,1),\ \ c>0
\end{split}
\end{equation}
\end{small}
\begin{theorem}
\label{st: general case}
    For any  values of the model parameters $\delta, a, b, d, r,\omega,q,\mu,c$  as in (\ref{eq: table of parameters}), the equation $G(x)=0$ has at most four solutions. 
\end{theorem}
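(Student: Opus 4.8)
The plan is to beat the trivial degree bound $d+1$ down to $4$ by combining Rolle's theorem with the reflection near-symmetry $x\mapsto 1-x$, which reduces the count for $G$ to counting the zeros of a low-order derivative. First I would rewrite \eqref{eq:g(x)=0}: expanding the incentive terms and collecting the purely polynomial contributions of degree $\le 2$ gives $G=\Pi-R$, where $\alpha:=a\omega\delta>0$, $\beta:=b(1-\omega)\delta>0$, $\Pi$ is a quadratic polynomial, $R:=\alpha u+\beta v$ with $u(x):=(1-x-q)(1-x)^d$ and $v(x):=(x-q)x^d$. The structural fact driving everything is the reflection identity $u(x)=v(1-x)$, which interchanges the two degree-$(d+1)$ ``bumps''. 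Since for $d\le 3$ the polynomial $G$ has degree at most $4$ and the claim is trivial, I would assume $d\ge 4$ throughout.

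Next I would apply Rolle's theorem twice: if $G$ had $N$ roots in $[0,1]$, then $G''$ would have at least $N-2$ roots in $(0,1)$. As $\Pi''$ is a constant, one has $G''(x)=\Pi''-R''(x)$, so the zeros of $G''$ in $(0,1)$ are exactly the solutions of $R''(x)=\Pi''$. It therefore suffices to prove that $R''$ is \emph{unimodal} on $(0,1)$, i.e. has a single interior minimum: then each level set $\{R''=\Pi''\}$ has at most two points, whence $N\le 4$.

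The heart of the argument is thus to show $R''$ has a unique critical point in $(0,1)$, i.e. that $R'''$ has exactly one zero there. Using $u(x)=v(1-x)$ and $v'''(x)=d(d-1)\,h(x)$ with $h(x):=x^{d-3}\bigl[(d+1)x-q(d-2)\bigr]$, a direct differentiation gives $R'''(x)=d(d-1)\bigl[\beta\,h(x)-\alpha\,h(1-x)\bigr]=:d(d-1)\,\Theta(x)$. The function $h$ vanishes and changes sign once, from $-$ to $+$, at $x_0:=\tfrac{q(d-2)}{d+1}\in(0,\tfrac12)$, and is decreasing then increasing with minimum at $x_1:=\tfrac{q(d-3)}{d+1}<x_0$. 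I would then argue that all zeros of $\Theta$ lie in the central interval $(x_1,1-x_1)$: on $(0,x_1)$ one has $h(x)<0$ while $h(1-x)>0$ (since $1-x>1-x_1>\tfrac12>x_0$), so $\Theta<0$, and symmetrically $\Theta>0$ on $(1-x_1,1)$, ruling out zeros on both end intervals; whereas on $(x_1,1-x_1)$ both $h'(x)>0$ and $h'(1-x)>0$, so $\Theta'(x)=\beta h'(x)+\alpha h'(1-x)>0$ and $\Theta$ is strictly increasing. Together with $\Theta(0)=-\alpha h(1)<0<\beta h(1)=\Theta(1)$, this forces exactly one zero of $\Theta$, hence of $R'''$, in $(0,1)$, so $R''$ is unimodal.

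I expect this last step to be the main obstacle, and it is exactly where the symmetry is essential: a priori the two degree-$(d+1)$ terms could make $R''$ oscillate, so the conclusion cannot come from the degree of $G$ alone. The reflection $u(x)=v(1-x)$ is what tames this, writing $R'''$ as the antisymmetric combination $\beta h(x)-\alpha h(1-x)$, confining its zero to the central region where monotonicity is manifest and excluding it from the two end regions by the sign of $h$. Once unimodality of $R''$ is in hand, the passage back to ``$G$ has at most four roots in $[0,1]$'' via the two applications of Rolle is routine.
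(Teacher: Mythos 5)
Your proposal is correct and follows essentially the same route as the paper's proof: both reduce the claim via Rolle's theorem to showing that the third derivative of $G$ (your $R'''$, since the quadratic part dies after three differentiations) has at most one zero in $(0,1)$, and both obtain this from the reflection structure $u(x)=v(1-x)$, which writes that derivative as $\beta h(x)-\alpha h(1-x)$ with $h(x)=x^{d-3}\bigl[(d+1)x-q(d-2)\bigr]$ and confines its unique sign change to the central interval by the same monotonicity-and-sign analysis of $h$ and its reflection. Your version is, if anything, slightly more careful than the paper's (explicitly disposing of $d\le 3$ and pinning down the sign of $\Theta$ at the endpoints), but the decomposition, the key lemma, and the mechanism are the same.
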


All numbers of equilibria between $0$ and $4$ may occur. In Figure \ref{fig:number of solutions}, we plot  the graph of $G$ for some specific values of the parameters, each of the sub-figure demonstrates the possibility of having $0,\ldots, 4$ equilibria, and Figure \ref{fig:graphs of soltutions} shows some concrete numerical solutions.
 \begin{figure}
    \centering
    \subfigure[Zero solutions;$d=7, b=17.64, a=6.18,\delta=0.48, r=1.16,\omega=0.465,\mu=0.415,q=0.4505,c=20$]{\includegraphics[scale=.45]{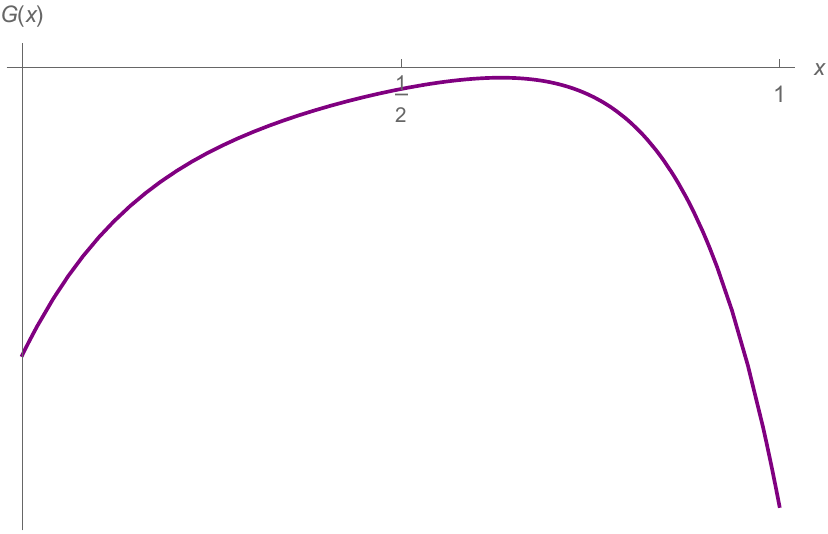}}
    \subfigure[One solution; $d=6, b=0.0001, a=15.32,\delta=1, r=5.57,\omega=0.133,\mu=0.643,q=0.3335,c=1$]{\includegraphics[scale=.45]{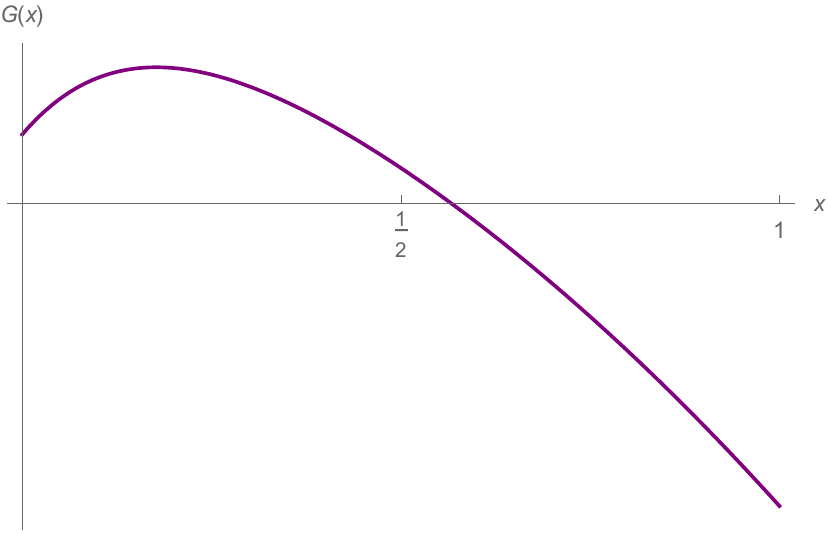}}
    \subfigure[Two solutions; $d=5, b=7.08, a=3.04,\delta=16.4, r=3.8,\omega=0.133,\mu=0.195,q=0.171,c=212$]{\includegraphics[scale=.45]{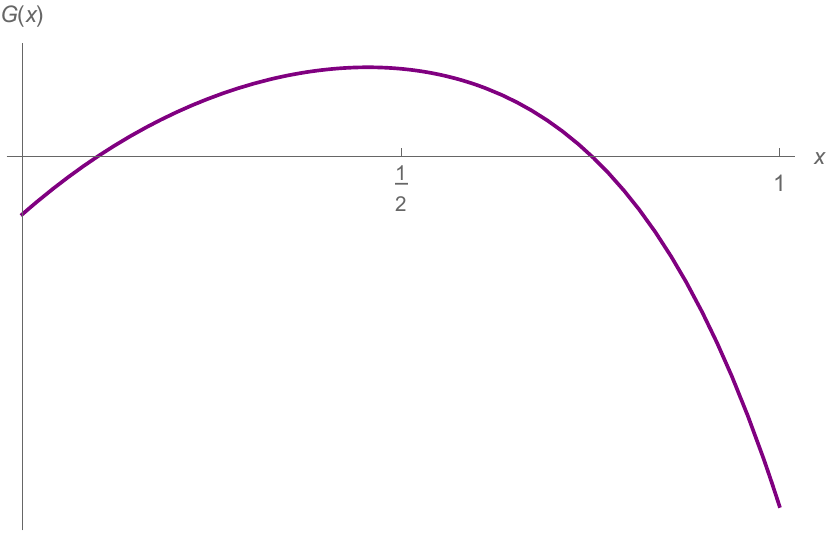}}
    \subfigure[Three solutions; $d=6, b=0.2, a=1.1,\delta=18.68, r=1.18,\omega=0.039,\mu=0.614,q=0.08,c=11.38$]{\includegraphics[scale=.45]{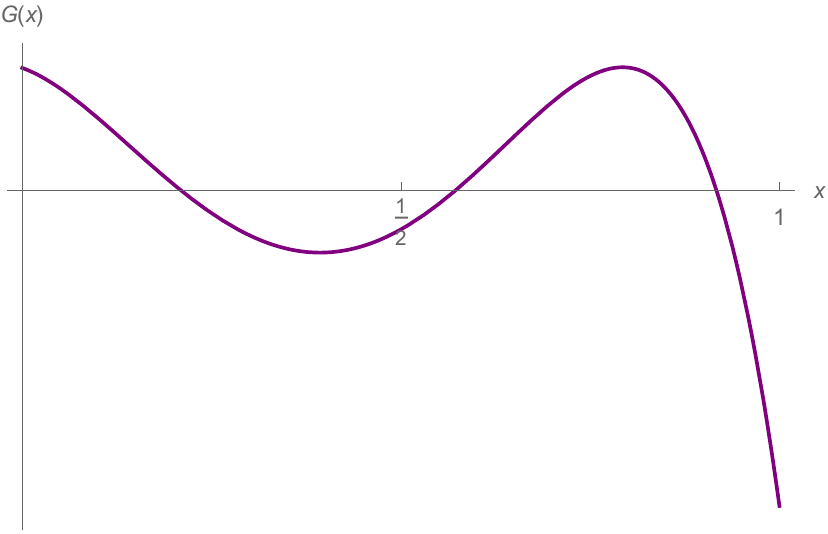}}
    \subfigure[Four solutions; $d=11, b=3.66, a=6.04\delta=16.4, r=1.05,\omega=0.133,\mu=0.643,q=0.195,c=174$]{\includegraphics[scale=.45]{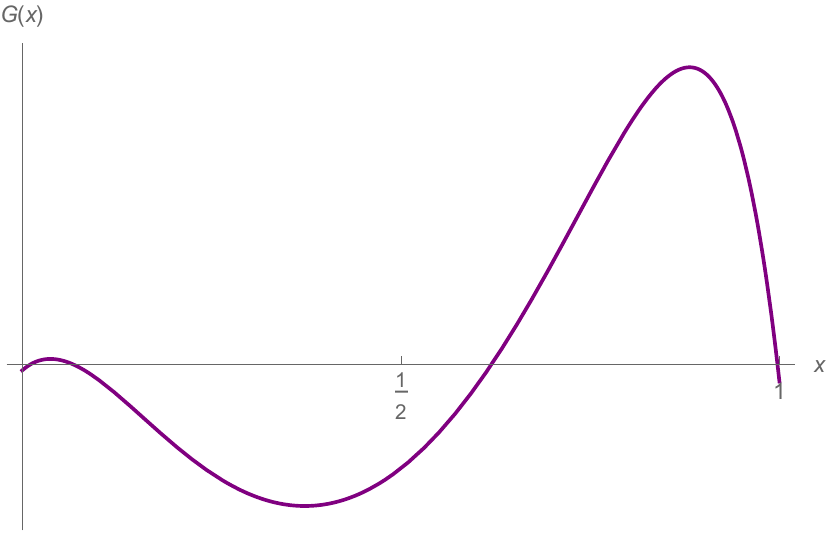}}
    \caption{Different possible numbers of solutions of the equation $G(x)=0$, in the presence of institutional incentives.}
    \label{fig:number of solutions}
\end{figure}

Furthermore, in Figure \ref{fig: random choice} we statistically and numerically calculate the probability of having a certain number of equilibria by randomly sampling the parameters. More precisely, we fix the value of $d$ for each simulation and impose the following restrictions on the rest of the parameters:
 \begin{equation*}
     \begin{split}
&\omega\in(0,1),\ \mu\in(0,1),\ \delta\in (0,10), \ c\in(0,5), r\in(1,d), \ q\in(0,1/2).
     \end{split}
 \end{equation*}
 For each iteration, we uniformly generate the parameters in the above intervals and solve the equation $G(x)=0$ using Wolfram Mathematica. The final output was the total number of cases with $0,1,2,3$ and $4$ solutions for the specified number of iterations. The graphic representation of the results is given in Figure \ref{fig: random choice}.

\begin{figure}[ht!]
 \centering
 \includegraphics[scale=.65]{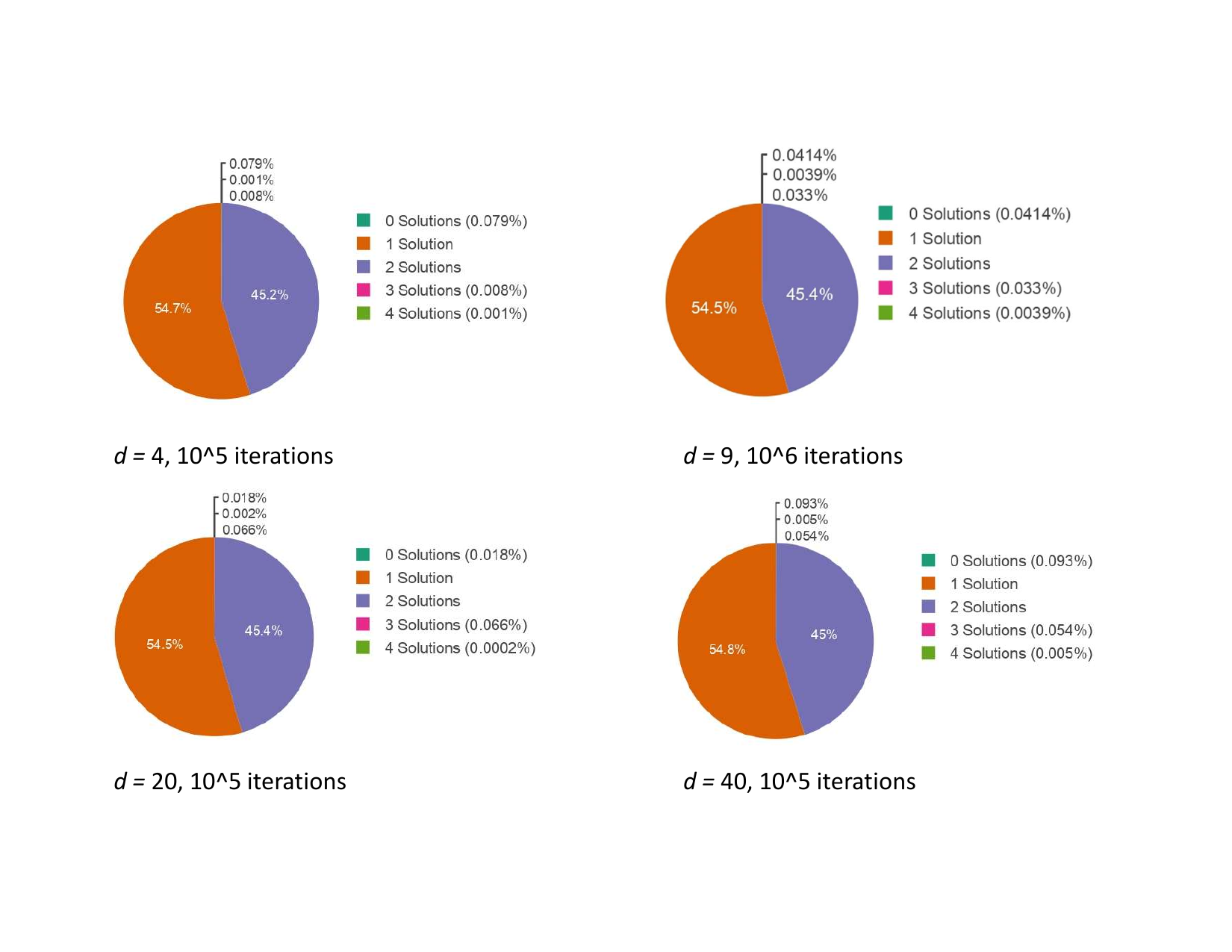}
\caption{Number of solutions for randomly chosen parameters, different values of $d$ and numbers of iterations}
\label{fig: random choice}
\end{figure}
We observe that the configurations with one or two solutions comprise the overwhelming majority of the cases. The ration between the two categories seems to be universal for all values of $d$; however, it can be checked that it changes when the permitted values of $c$ and $\delta$ are different. 
 
\subsubsection{Stability analysis}

 Substituting the endpoints of $[0,1]$ into $G(x)$, we get $G(0) = -b \delta  q (1-\omega)+\mu $, which clearly can have varying signs, but $ G(1) = -q \left(a \delta  \omega +c (r-1)\right)-\mu<0$. This entails that the following holds for simple equilibria of $G(x)=0$:
    \begin{enumerate}
        \item If (\ref{eq:g(x)=0}) has one equilibrium, it is stable;
        \item  If (\ref{eq:g(x)=0}) has two equilibria, they are, in order of increase, unstable and stable;
        \item If (\ref{eq:g(x)=0}) has three equilibria, they are, in order of increase, stable, unstable and stable;
            \item If (\ref{eq:g(x)=0}) has four equilibria, they are, in order of increase, unstable, stable, unstable and stable;
    \end{enumerate}
Thus, it follows that the largest equilibrium is always stable. Thus, if the cooperation is initially abundant, it will sustain around the largest equilibrium. In the case that the system has one or three equilibria, the smallest one is also stable; thus even if the cooperation is small initially, it will increase, reaching the smallest stable equilibrium.  On the other hand, in the case that the system has two or four equilibria, the smallest one will be unstable while the second one is stable. In these cases, if the level of cooperation is sufficiently large initially, more precisely if it is larger than the smallest unstable equilibrium, it will sustain and reach the second stable equilibrium. However, if the cooperation is too small initially (smaller than the smallest equilibrium), then it will die out. We also note that a stable equilibrium represents the co-presence of the strategies.  Thus, Theorem \ref{st: general case} and the above stability analysis provide quantitative insights into the emergence of cooperative behaviour, as well as the biological/behavioural diversity of evolutionary systems.
\subsubsection{The role of institutional incentives}
The main result of this section is the following theorem.
\begin{theorem}
\label{prop: sufficient incentive}    
When $\delta$ is sufficiently large (see \eqref{eq: lower bound} for an explicit lower bound), then the following statements hold
\begin{enumerate}
    \item There is at least one stable equilibrium $x^*$, satisfying $\frac{1}{2}<x^*<1$.
    \item $x^*$ increases with respect to $\delta$.
    \item $x^*$ decreases with respect to $\mu$.
    \item $x^*$ decreases with respect to $q$.
    \item $x^*$ decreases with respect to $\omega$, and satisfies $0.5<x_R^*<x^*_{mixed}<x^*_P<1$, where $x_R^*, x_{mixed}^*$ and $x^*_P$ respectively denote the stable equilibrium when using pure reward, mixed reward and punishment and pure punishment incentives.
\end{enumerate}
\end{theorem}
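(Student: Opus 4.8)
The plan is to exploit the natural decomposition $G(x)=\delta\,H(x)+L(x)$, in which
\[
H(x)=a\omega(1-x-q)\big(1-(1-x)^d\big)+b(1-\omega)(x-q)(1-x^d)
\]
collects all terms carrying the factor $\delta$, and $L(x)=\frac{c}{d}x\big(rq(d-1)+(r-d)(1-q)-x(2rq(d-1)+(r-d))\big)-\mu(2x-1)$ is the $\delta$-independent remainder. Crucially, $L$ coincides with the no-incentive polynomial analysed in Proposition \ref{st: equilibria delta zero}, so everything known there is available to me.

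First I would localise the equilibrium sharply. A direct evaluation gives $H(1-q)=b(1-\omega)(1-2q)\big(1-(1-q)^d\big)>0$ (using $q<\tfrac12$) and $H(1)=-a\omega q<0$, so $H$ vanishes somewhere in $(1-q,1)$. Since the excerpt already records $G(1)=-q\big(a\delta\omega+c(r-1)\big)-\mu<0$ for all admissible parameters, while $G(1-q)=\delta H(1-q)+L(1-q)>0$ as soon as $\delta>-L(1-q)/H(1-q)$ (any $\delta>0$ if that ratio is non-positive), the largest root $x^*$ of $G$ must lie in $(1-q,1)\subset(\tfrac12,1)$; this inequality is the explicit lower bound on $\delta$ quoted in the statement. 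Because $G$ is positive just left of its largest root and $G(1)<0$, the crossing is downward, so $x^*$ is stable, which proves part (1) together with the refined localisation $x^*>1-q$ needed later.

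For the comparative statics I would differentiate the identity $G(x^*)=0$ implicitly. At the simple root $x^*$ one has $G'(x^*)<0$ by stability, so $\operatorname{sign}\!\big(\partial x^*/\partial p\big)=\operatorname{sign}\!\big(\partial_p G|_{x^*}\big)$ for each parameter $p$. Part (3) is then immediate, since $\partial_\mu G=-(2x-1)<0$ at $x^*>\tfrac12$. For part (2), at equilibrium $\partial_\delta G=H(x^*)=-L(x^*)/\delta$, and Proposition \ref{st: equilibria delta zero} tells us that $L$ has its unique root in $[0,1]$ strictly below $\tfrac12$ with $L(1)<0$, forcing $L<0$ throughout $(\tfrac12,1)$; hence $H(x^*)>0$ and $x^*$ increases in $\delta$. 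Part (4) follows because in $\partial_q G=-a\omega\delta\big(1-(1-x)^d\big)-b(1-\omega)\delta(1-x^d)+\frac{c}{d}x(\cdots)$ the two $\delta$-terms are strictly negative and swamp the bounded $\delta$-free remainder once $\delta$ is large. Finally, for part (5), $\partial_\omega G=a\delta(1-x-q)\big(1-(1-x)^d\big)-b\delta(x-q)(1-x^d)$, and the localisation $x^*>1-q$ makes $1-x^*-q<0$, so the first term is $\le 0$ and the second is $<0$; thus $\partial_\omega G|_{x^*}<0$, and monotonicity in $\omega$ yields $x_R^*=x^*|_{\omega=1}<x^*_{mixed}<x^*_P=x^*|_{\omega=0}$, all lying above $1-q>\tfrac12$.

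The part I expect to be most delicate is the sharp localisation $x^*>1-q$, since it is precisely what renders the sign of $1-x^*-q$, and therefore of $\partial_\omega G$, determinate in part (5); a cruder bound $x^*>\tfrac12$ would leave that sign ambiguous. A second technical point is justifying that $x^*$ is a simple root with $G'(x^*)<0$, so that the implicit function theorem applies and the monotonicities are strict; both issues are controlled by comparing $G$ with its rescaled limit $\delta^{-1}G\to H$ as $\delta\to\infty$, and the main bookkeeping is to choose a single threshold for $\delta$ that simultaneously secures the localisation, the domination in part (4), and the non-degeneracy of $x^*$.
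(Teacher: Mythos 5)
Your proposal is correct, and while it follows the same overall strategy as the paper (sign evaluations of $G$ at well-chosen points to produce a stable equilibrium above $\tfrac12$, then implicit differentiation $\partial x^*/\partial p=-\partial_pG/\partial_xG$ with $\partial_xG(x^*)<0$), the execution differs in one genuinely useful way. The paper tests the signs $G(0)<0$, $G(1/2)>0$, $G(1)<0$ and only concludes $x^*\in(\tfrac12,1)$; you instead test $G$ at $x=1-q$, where $H(1-q)=b(1-\omega)(1-2q)\bigl(1-(1-q)^d\bigr)>0$, obtaining the sharper localisation $x^*>1-q$. This pays off in part (5): the paper's proof of monotonicity in $\omega$ explicitly assumes $a=b$ in order to control the sign of $\partial_\omega G$, whereas your bound $1-x^*-q<0$ makes both terms of $\partial_\omega G=a\delta(1-x^*-q)\bigl(1-(1-x^*)^d\bigr)-b\delta(x^*-q)\bigl(1-(x^*)^d\bigr)$ strictly negative for arbitrary $a,b>0$, so your argument proves the stated claim in greater generality than the paper's own proof. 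Your treatment of part (2) via the known sign of the no-incentive quadratic $L$ on $(\tfrac12,1]$ (unique root below $\tfrac12$ plus $L(1)<0$) is equivalent in substance to the paper's direct estimate of the bracketed expression. The one place where you are less explicit than the paper is part (4): the paper isolates the domination condition in a separate lemma with the concrete threshold $\frac{c((d-2)r+d)^2}{8(d-1)dr}<\delta\min\bigl(b(1-\omega),a\omega\bigr)$, while you only assert that the $\delta$-terms swamp the bounded remainder; since $1-(1-x^*)^d\ge 1-q^d$ is bounded below on $(1-q,1)$ this is easy to make quantitative, and the paper's headline bound \eqref{eq: lower bound} does not incorporate that extra condition either, so this is a gap of explicitness rather than of substance.
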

In the case of no incentive, we have shown in Section  \ref{sec: no institutional incentive} that the replicator-mutator dynamics has one stable equilibrium, in which the level  of cooperation  is less than $0.5$. This is also the case when the institutional incentive, $\delta$, is sufficiently small (see Appendix for a proof). 

Parts 1 and 2 of Theorem \ref{prop: sufficient incentive} clearly show the role of the institutional incentive in enhancing the level of cooperation: when the (per capita) budget for providing institutional incentives is sufficiently large, there is one stable equilibrium in which the level of cooperation is larger than $50\%$ (i.e. cooperation becomes more frequent than defection in the population). Moreover,  as this  budget increases,   the  cooperation also becomes more frequent.

Parts 3 and 4 reveal the effects of the additive and multiplicative mutations: in contrast to the per capita incentive, stable equilibria decrease with respect to both mutations. 

The last part, part 5, provides useful insight for the institution: punishment is more efficient than reward on enhancing the level of cooperation. 

For an illustration of Theorem \ref{prop: sufficient incentive}, see Figure \ref{fig: sufficient incentive}.
\begin{figure}[ht!]
\centering
    \subfigure[$x^{\ast}$ as a function of $\mu$; $d=9, b=0.08,a=5.74,\delta=60.4, r=1.18,\omega=0.232,q=0.024,c=2.55$]{\includegraphics[scale=.33]{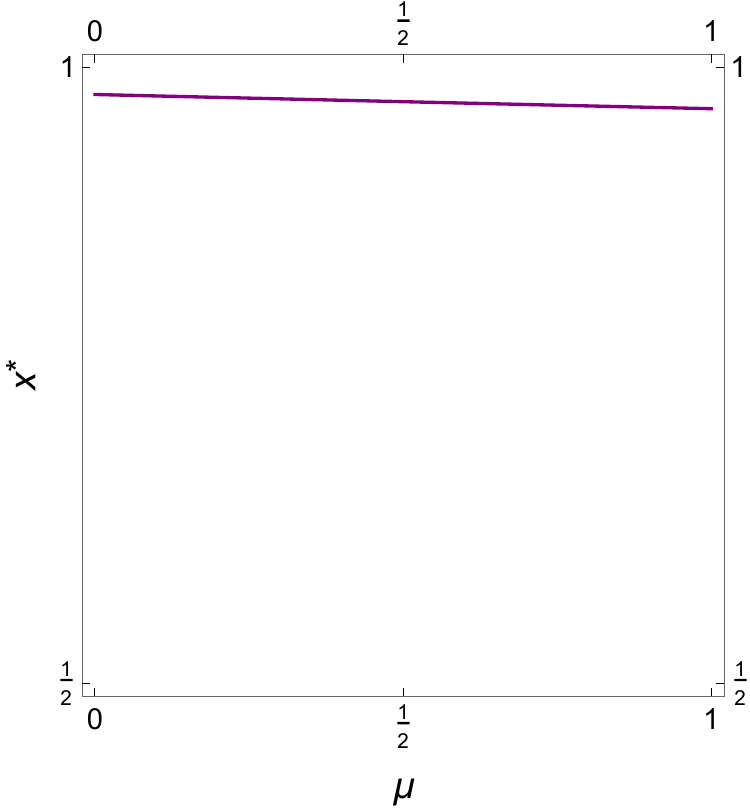}}
\subfigure[$x^{\ast}$as a function of $\delta$; $d=10, b=5.96, a=5,\omega = 0.476, r=8.39,\mu =0.146, q=0.464, c=8.75$ ]{\includegraphics[scale=.33]{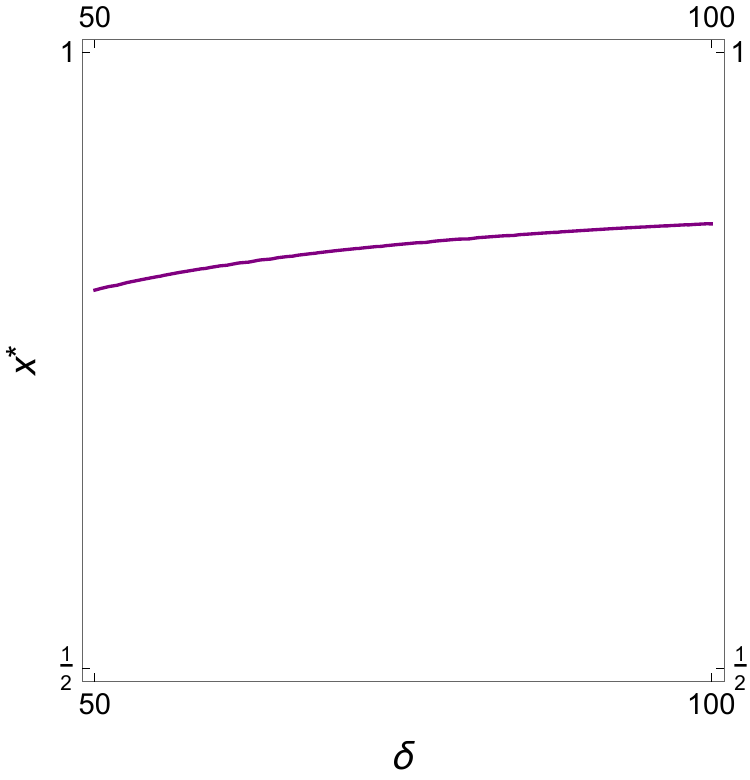}}
    \subfigure[$x^{\ast}$as a function of $\omega$; $d=8, b=4.02, a=8.62,\delta=181, r=1.93,\mu=0.138, q=0.116,c=12.15$]{\includegraphics[scale=.33]{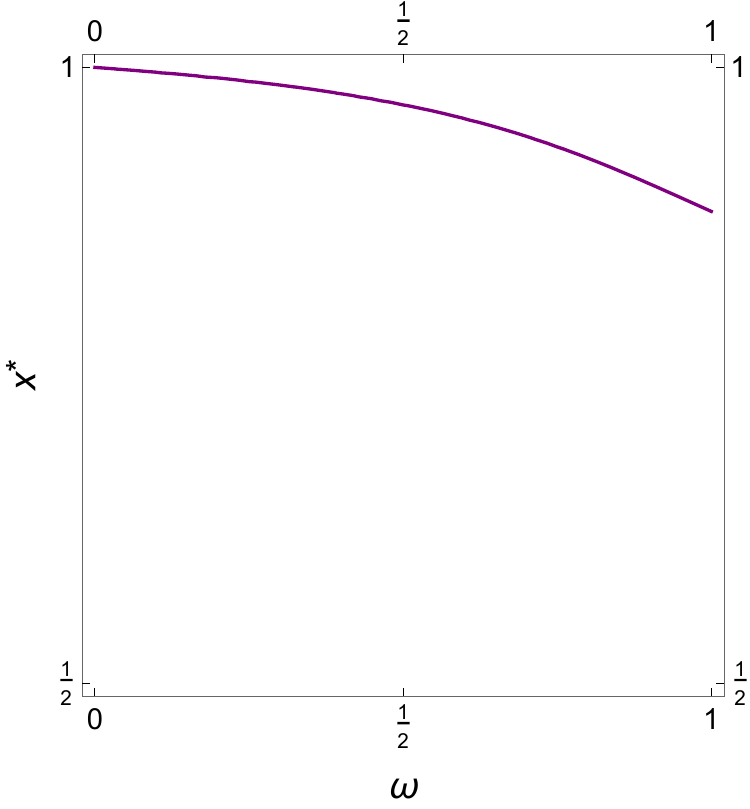}}
    \caption{Examples on the dependence of $x^{\ast}$on $\mu, \ \delta$ and $\omega$ for large values of $\delta$ (Theorem \ref{prop: sufficient incentive}).}
    \label{fig: sufficient incentive}
\end{figure}

\subsubsection{Special cases}
In Section \ref{sec: no institutional incentive} we have studied the case $\delta=0$; below we investigate what happens when other parameters in $G(x)$ assume values on the end points of the permitted intervals. The number in each entry of the table is the possible number of root of $G$. We recall the biological meaning of the parameters:
\begin{itemize}
    \item  $\delta>0$ is the per capita incentive, $\delta=0$ means no incentive is used.
    \item $\omega\in[0,1]$ represents the type of institutional incentive, $\omega=0$ (respectively, $\omega=1$) corresponds to pure punishment (resp. reward) while $0<\omega<1$ corresponds to a mixture of both reward and punishment.
    \item $q\in[0,1/2]$ and $\mu\in[0,1]$ model, respectively, the multiplicative and additive mutations. If one value is $0$, it means that the corresponding type of mutation is not considered.
\end{itemize}

\begin{theorem}
\label{st: particular cases}
Equation \eqref{eq:g(x)=0} has the following number of roots in the different cases represented in the table. 
 \begin{small}
    \[
    \begin{array}{c|c|c|c|c|c|c|c}
      &\delta = 0&\omega=0&\omega=1^{\ast}&\mu=0&\mu=1&q=0\\
      \hline
      \delta=0& \ 1&\ 1 & \ 1 & 1~ \text{or}~ 2 &1~ \text{or}~2 &\ 2&\\
      \hline
      \omega=0& \ 1&  \ \text{0, 1, 2 or 3}&  \times& \text{0 or 2 }&\text{0 or 2}& \text{1 or 3}\\
      \hline
      \omega = 1^{\ast}&1& \times & 1& 1&1& 1\\
      \hline
      \mu=0&\text{1 or 2}& \text{0 or 2 }& \text{1}&\text{0, 2 or 4 } &\times & \text{2 or 4 }\\
      \hline
      \mu=1&\text{1 or 2}& \text{0 or 2 }& \text{1}&\times  & \text{0, 1, 2, 3, or 4 }&\text{1 or 3 } &\\
      \hline
      q=0& \text{2} & \text{1 or 3 }& \text{1}&\text{2 or 4} &\text{1 or 3} &\text{1 or 3 } &\\
      \hline
      & & & & & & &
    \end{array}
    \]
    \end{small}
  The diagonal entries are the cases when only one condition is imposed; all other elements have two conditions imposed simultaneously.  The crosses denote incompatible conditions, such as $\mu=0$ and $\mu=1$.  

  $\ast$ -- unless $\mu=b q \delta$.
\end{theorem}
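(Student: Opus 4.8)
The plan is to read each entry of the table as a specialization of the master polynomial $G$ in \eqref{eq:g(x)=0}: fixing a parameter at an endpoint of its range either deletes one of the two incentive terms or degenerates $G$ to low degree. Concretely, $\delta=0$ collapses $G$ to the quadratic $Ax^2+Bx+\mu$ already analysed in Section~\ref{sec: no institutional incentive}, so every cell of the $\delta=0$ row/column is read off from Proposition~\ref{st: equilibria delta zero} together with the sign of the discriminant and the endpoint values; $\omega=1$ kills the punishment term, leaving $G=\tilde Q(x)+a\delta(1-x-q)\big(1-(1-x)^d\big)$ with $\tilde Q$ quadratic, and $\omega=0$ kills the reward term symmetrically; $\mu=0$ removes $-\mu(2x-1)$; $\mu=1$ replaces it by $1-2x$; and $q=0$ simplifies the two incentive factors to $(1-x)\big(1-(1-x)^d\big)$ and $x(1-x^d)$. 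The off-diagonal cells are the simultaneous imposition of two such reductions, with the crosses marking the incompatible pairs.

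The first systematic tool is endpoint evaluation. From $G(0)=\mu-b\delta q(1-\omega)$ and $G(1)=-q\big(a\delta\omega+c(r-1)\big)-\mu$ one reads the signs of $G$ at $0$ and $1$ in each cell. Since $G(1)\le 0$ (strictly negative unless $q=\mu=0$), the sign of $G(0)$ fixes the \emph{parity} of the interior root count: $G(0)>0$ forces an odd number, $G(0)<0$ an even number. This single observation already explains the qualitative shape of the table: $\mu=0$ gives $G(0)=-b\delta q(1-\omega)\le 0$ and $G(1)<0$, hence only the even values $0,2,4$; $q=0$ gives $G(0)=\mu>0$, $G(1)=-\mu<0$, hence the odd values $1,3$; and pure reward $\omega=1$ gives $G(0)=\mu>0$, forcing an odd count which we then pin to $1$. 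The coincidence $\mu=b\delta q$ flagged by the asterisk is exactly where $G(0)=0$, so $x=0$ becomes a root and the generic count must be adjusted; these measure-zero loci (and the corner $q=\mu=0$, where $G(1)=0$ as well) are treated separately, with boundary roots counted.

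To turn parity into the exact finite list I would combine the global bound of Theorem~\ref{st: general case} ($G$ has at most four roots) with sharper, case-specific upper bounds. The key structural remark is that in the pure cases $G$ becomes a \emph{sparse} polynomial: for pure punishment, $G=\tilde Q_0(x)-b\delta(x^{d+1}-qx^d)$ is supported only on the monomials $1,x,x^2,x^d,x^{d+1}$, and the reflection $x\mapsto 1-x$ (under which $(1-x-q)\big(1-(1-x)^d\big)$ and $(x-q)(1-x^d)$ interchange, with $a\omega\leftrightarrow b(1-\omega)$, while $-\mu(2x-1)$ changes sign) carries pure reward to a polynomial of the same sparse pure-punishment shape. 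Descartes' rule of signs then bounds the positive-root count by the number of sign changes ($\le 4$), and this is refined to the unit interval either by the substitution $x=t/(1+t)$, which turns roots in $(0,1)$ into positive roots in $t$, or more cheaply by noting that the incentive term $(x-q)x^d$ is one-signed on $[q,1]$ and there can only push $G$ downward; this monotone behaviour past $x=q$ is what forbids the extra up-crossing and yields the caps ($\le 3$ for punishment, and, with the odd parity, exactly $1$ for reward). Throughout, the reflection halves the work by transferring each $\omega=0$ cell to its $\omega=1$ counterpart.

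Finally, attainability of every value in each list is certified by exhibiting explicit parameters in the admissible ranges \eqref{eq: table of parameters} realising that many roots, in the spirit of Figure~\ref{fig:number of solutions}. The main obstacle is not the parity bookkeeping but precisely these sharp upper bounds in the rich cells: proving that pure reward admits exactly one root (ruling out three) and that pure punishment admits at most three (ruling out four) requires the careful sign-change analysis above, since the naive Descartes/Rolle count initially allows one root too many and the global bound of Theorem~\ref{st: general case} alone is too weak to separate these cases. Verifying the degenerate coincidences where an endpoint is a root, and checking that the lists are exhaustive across all the compatible cells, is the remaining—largely mechanical but lengthy—part of the argument.
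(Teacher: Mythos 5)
Your overall skeleton coincides with the paper's: evaluate $G$ at the endpoints of $[0,1]$ to fix the parity of the interior root count, invoke Theorem \ref{st: general case} for the global cap of four, exploit the sparseness of the specialised polynomial via Descartes' rule of signs in the pure-punishment case, transport the result to pure reward by the reflection $x\mapsto 1-x$ (with $a\omega\leftrightarrow b(1-\omega)$), and certify attainability by exhibiting explicit parameters. The divergence, and the genuine gap, is at the two load-bearing exclusions that you yourself identify as the main obstacle. For $\omega=0$ the incentive term is $b\delta(x-q)(1-x^d)$, which on $[q,1]$ is \emph{non-negative} (it vanishes at $x=q$ and at $x=1$ and is positive in between), so it pushes $G$ \emph{up}, not down as you assert; more importantly, adding a one-signed but non-monotone bump to a quadratic can perfectly well create two additional crossings, so "one-signedness past $x=q$" does not forbid the extra up-crossing and does not deliver the cap of three for punishment or of one for reward. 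The fallback you mention, $x=t/(1+t)$, yields after clearing denominators a dense degree-$(d+1)$ polynomial in $t$, and you give no reason its sign-change count improves on the four you already have; as written this is a placeholder, not an argument.

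The paper closes these cases differently: for $\omega=0$ it writes the specialisation in the sparse form \eqref{eq: omega0}, notes that four positive roots force the coefficient sign pattern $(-,+,-,+,-)$, applies Descartes a second time to the polynomial in $-x$ to count negative roots, and then argues via the degree and rootlessness of the remaining cofactor that four positive roots are impossible; for $\omega=1$ it substitutes $y=1-x$ and reuses that sign-pattern analysis to exclude three roots in $[0,1]$, which together with the endpoint signs forces exactly one. (Separately it handles $\mu=0$, $q=0$ and the off-diagonal cells by the same endpoint-parity bookkeeping you describe, so that part of your plan is fine.) To make your proposal a proof you must replace the "monotone bump" step with an argument of this second-Descartes/factorisation type, or with an honest Sturm or Budan--Fourier computation on $[0,1]$; until then the entries "$0,1,2$ or $3$" for $\omega=0$ and "$1$" for $\omega=1$ are asserted rather than established.
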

This theorem further demonstrates the effect of the parameters on the number of equilibria. 
\subsubsection{Conditions for one or two solutions}
As shown in Figure \ref{fig: random choice}, at least when the parameters are in the given intervals, the probabilities of having three or four equilibria are much smaller than those of having one or two equilibria.
For this reason, in this section, we describe the conditions under which equation $G(x) = 0$ has one or two solutions. We also note that the conditions for the existence of three and four solutions are indeed much more involved to pin down (for example, we conjecture that there is no Sturm sequence that fits the polynomial $G(x)$ for all the values of the parameters). 
\begin{figure}
    \centering
   \subfigure[General form   ]{\includegraphics[scale=.5]{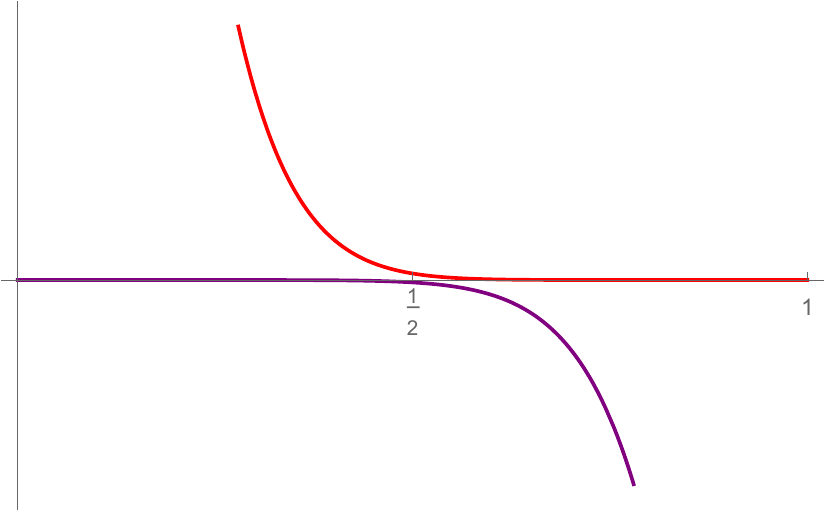}}
   \subfigure[Zoomed in]{\includegraphics[scale=.5]{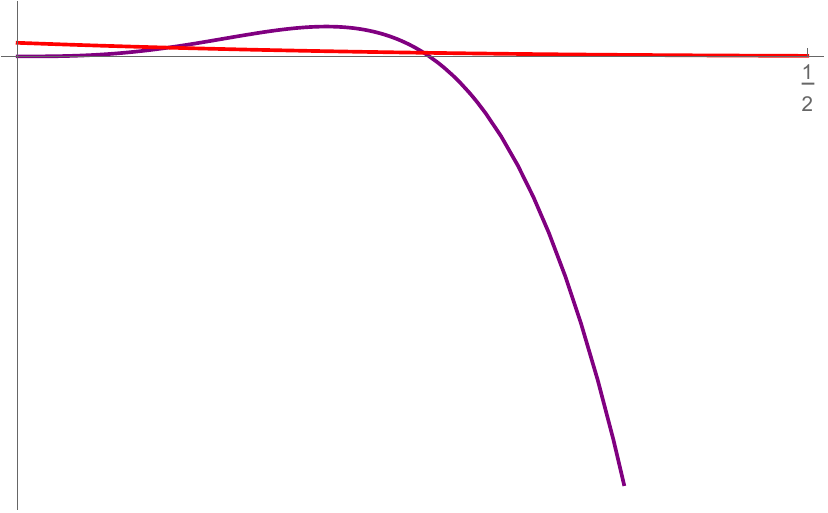}}
   \caption{Graphs of two parts of $G''_1(x)$}
    \label{fig:x and 1-x zoomed in}
\end{figure}
Again, we rearrange the equation $G(x)=0$ as below
\begin{small}
\begin{equation}
\begin{split}
\label{eq: conditions for two solutions original}
&\underbrace{-\frac{c}{d} x \Big(rq(d-1)+(r-d)(1-q)-x(2rq(d-1)+(r-d))\Big)+\mu(2x-1)}_{G_2(x)}
\\&=\underbrace{\omega  \delta (1-x-q) \Big(1-(1-x)^d\Big)+(1-\omega)\delta (x-q)(1-x^d)}_{G_1(x)}. 
\end{split}
\end{equation}
\end{small}

We have remarked before that the graph of $G_2(x)$ is a parabola; here, we additionally observe that the graph of $G_1(x)$  is a downturned curve resembling a parabola. This gives rise to the following proposition.
\begin{proposition}
\label{st: when one or two solution}
  When the function $G_1(x)$ is convex and the graph of the function $G_2(x)$ is an upturned parabola, the equation (\ref{eq: conditions for two solutions original}) has one or two solutions.  In particular, it has one solution if $\mu
\geq\delta q(1-\omega)$ and two solutions if $\mu<\delta q(1-\omega)$.
\end{proposition}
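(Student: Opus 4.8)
The plan is to count the roots in $[0,1]$ of $G=G_1-G_2$, where $G_1$ and $G_2$ are the two functions identified in \eqref{eq: conditions for two solutions original}; since $G(x)=0$ is equivalent to $G_2(x)=G_1(x)$, the solutions of the equation are exactly the roots of $G$. The whole argument rests on converting the two stated hypotheses into a single statement about the second derivative of $G$. First I would note that, as an upturned parabola, $G_2$ has constant second derivative $G_2''=\frac{2c}{d}\big(2rq(d-1)+(r-d)\big)>0$, while the hypothesis on $G_1$ forces $G_1''\le 0$ on $[0,1]$ (this is the ``downturned, parabola-like'' behaviour discussed just before the proposition and quantified through $G_1''$ in Figure \ref{fig:x and 1-x zoomed in}). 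Consequently $G''=G_1''-G_2''<0$, so $G$ is strictly concave on $[0,1]$, and a strictly concave function has at most two zeros. This already yields the ``at most two solutions'' part.

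Next I would pin down the signs at the endpoints. A direct substitution gives $G(0)=\mu-\delta q(1-\omega)$ (using $(1-x)^d|_{x=0}=1$ and $x^d|_{x=0}=0$), while the computation recorded in the stability analysis gives $G(1)=-q\big(a\delta\omega+c(r-1)\big)-\mu<0$ for all admissible parameters. Combining these with concavity produces the dichotomy. If $\mu\ge \delta q(1-\omega)$ then $G(0)\ge 0>G(1)$; since the superlevel set $\{G\ge 0\}$ of a concave function is an interval containing $0$ but not $1$, $G$ changes sign exactly once, giving a single solution. If instead $\mu<\delta q(1-\omega)$ then $G(0)<0$ and $G(1)<0$; a concave function that is negative at both endpoints either stays negative (no root) or rises above zero in the interior (two roots), and I would argue that the latter occurs, producing two solutions.

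The main obstacle is exactly this last point: ruling out the ``no root'' alternative when $\mu<\delta q(1-\omega)$, since concavity together with negative endpoint values is not by itself sufficient. Thus I would establish that $G$ attains a positive value in $(0,1)$. Two natural routes present themselves: (i) show that the initial slope is positive, where $G'(0)=\delta\big(\omega d(1-q)+(1-\omega)\big)-2\mu+\tfrac{c}{d}\big(rq(d-1)+(r-d)(1-q)\big)$, and combine a positive slope with concavity and $G(1)<0$ to force a sign change on the way up and again on the way down; or (ii) evaluate $G$ at a structurally convenient interior point such as $x=q$ or $x=1-q$, where the incentive part $G_1$ collapses to the single nonnegative term $\delta(1-2q)\big(1-(1-q)^d\big)$ (weighted by $\omega$ or $1-\omega$), and show the resulting value is positive after subtracting $G_2$. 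Since $(r-d)(1-q)<0$, route (i) is not unconditional, so I expect route (ii), exploiting the reflection structure of $G_1$, to be the more robust choice.

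Finally, I would tie off the rigorous justification of the hypothesis $G_1''\le 0$, as this is the genuinely model-specific input. Writing $G_1''$ as the sum of its two incentive contributions and factoring out the powers $x^{d-2}$ and $(1-x)^{d-2}$, the sign of each piece is governed by a linear factor of the type $q(d-1)-(d+1)x$, and I would verify, in the parameter regime of the proposition, that their combination is nonpositive throughout $[0,1]$ --- precisely the two-curve comparison depicted in Figure \ref{fig:x and 1-x zoomed in}. With concavity thereby secured, the endpoint and interior sign checks above complete the proof.
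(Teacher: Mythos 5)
Your argument is essentially the paper's own: evaluate the two sides at the endpoints and invoke the fact that a concave curve and an upturned parabola meet in at most two points. The paper phrases this as a statement about the graphs of $G_1$ and $G_2$, you phrase it as strict concavity of the single function $G=G_1-G_2$; these are the same observation, and your endpoint values $G(0)=\mu-\delta q(1-\omega)$ and $G(1)<0$ coincide with the paper's $G_1(0)-G_2(0)$ and $G_1(1)-G_2(1)$. Your decision to read ``convex'' in the statement as $G_1''\le 0$ is also the correct one: the appendix (Section \ref{sec: conditions two solutions}) explicitly seeks the regime where $G_1''<0$ on $[0,1]$, and the literal reading would destroy the at-most-two count, so the wording of the proposition is an internal inconsistency rather than something you misread.

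The step you leave open --- exhibiting an interior point where $G>0$ when $\mu<\delta q(1-\omega)$, so that ``at most two'' upgrades to ``exactly two'' --- is precisely the step the paper does not supply either: its proof ends with the unsupported sentence ``when $\mu<\delta q(1-\omega)$, two solutions will exist.'' So you have not missed an idea that the paper provides; you have correctly located the gap in the published argument. Be aware, though, that neither of your proposed repairs closes it unconditionally. Route (i) fails as you note, and for route (ii) one computes
\[
G(q)=(1-2q)\Bigl[\omega\delta\bigl(1-(1-q)^d\bigr)+\tfrac{cq}{d}\bigl(rq(d-1)+r-d\bigr)+\mu\Bigr],
\]
whose middle term is negative whenever $rq(d-1)<d-r$, so $G(q)>0$ (and likewise $G(1-q)>0$) requires an additional hypothesis such as $\delta$ being large relative to $c$. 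As stated, the ``two solutions'' half of Proposition \ref{st: when one or two solution} guarantees only \emph{zero or two} solutions under the given hypotheses; any complete proof must add a condition ensuring $\max_{[0,1]}G>0$.
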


By direct computations, we obtain the following:
\begin{equation}
\begin{split}
G_1(0) &= -\delta  q (1-\omega)<0,\\
G_2(0) &= -\mu<0, \\
G_1(1) &=-\delta  q \omega<0,\\
G_2(1) &= c q (r-1)+\mu >0
\end{split}
\end{equation}
Since the inequality $G_1(1)<G_2(1)$ always holds,  so if the conditions are as described in the statement of Proposition \ref{st: when one or two solution}, a solution will be unique.
On the other hand, when $\mu<\delta q (1-\omega)$, two solutions will exist.
\begin{figure}
    \centering
    \includegraphics[scale=.5]{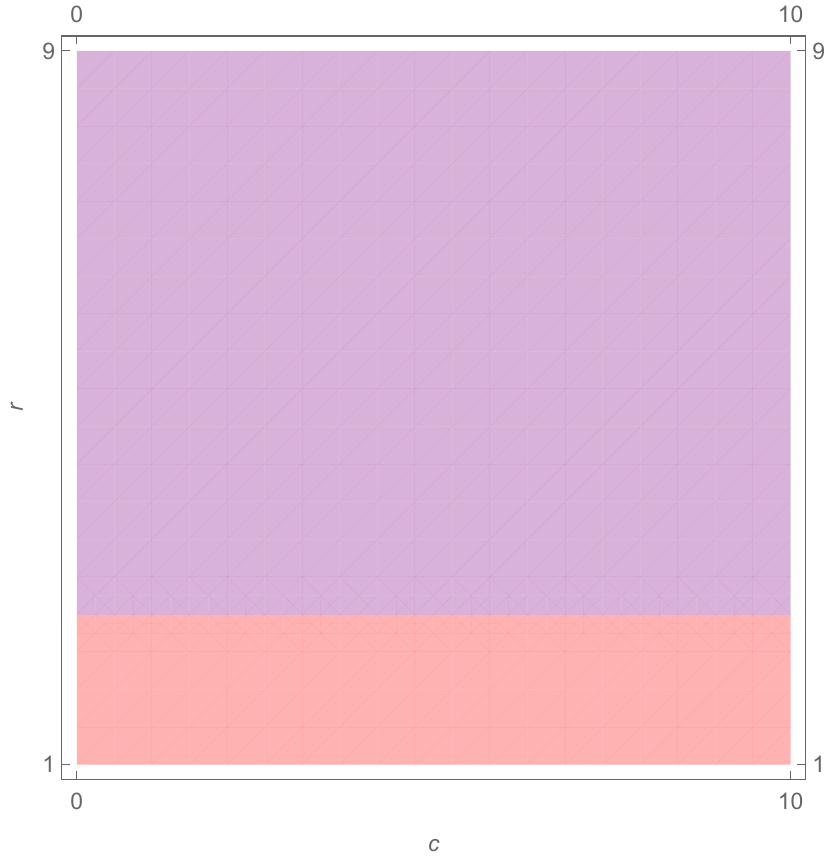}
    \caption{Conditions for no more than one or two solutions  in $c$ and $r$, $d=9, q = 0.1475$. The region in purple is the region corresponding to one or two solutions, the  region in red corresponds to possibly more solutions.  }
    \label{fig:conditions in c and r}
\end{figure}
In Section \ref{sec: conditions two solutions} of the appendix, we determine the conditions on $\omega$ and $q$ to satisfy Proposition \ref{st: when one or two solution}. More precisely, $q$ and $\omega$ respectively solve equations \eqref{eq: conditions on q} and \eqref{eq: conditions for omea}. By solving these equations, we can determine when the replicator-mutator dynamics has one or two solutions. We numerically plot this in Figure \ref{fig: one or two solutions in omaga and q}: the area with one or two solutions is coloured purple, and the area where more than two solutions may exist is coloured red.  
\begin{figure}[ht!]
    \centering
    \subfigure[Conditions in $\omega$]{\includegraphics[scale=.5]{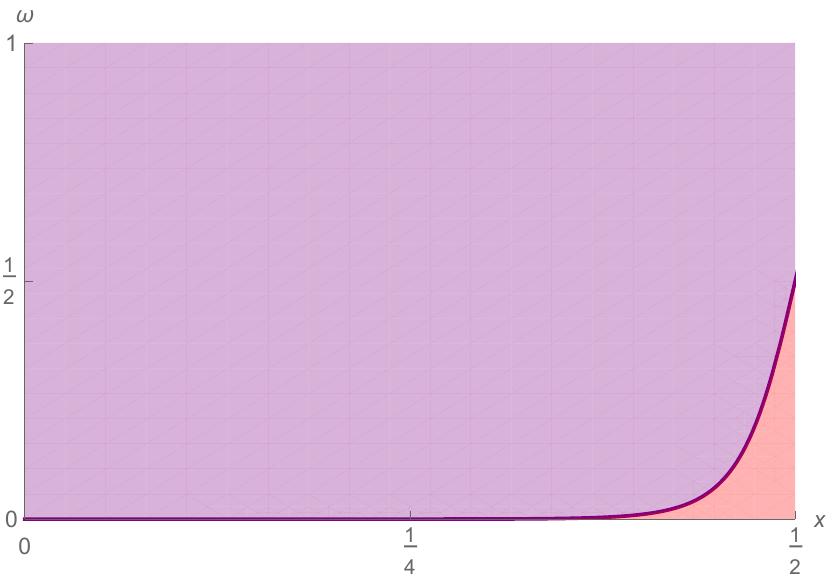}}
    \subfigure[Conditions in $q$]{\includegraphics[scale=.5]{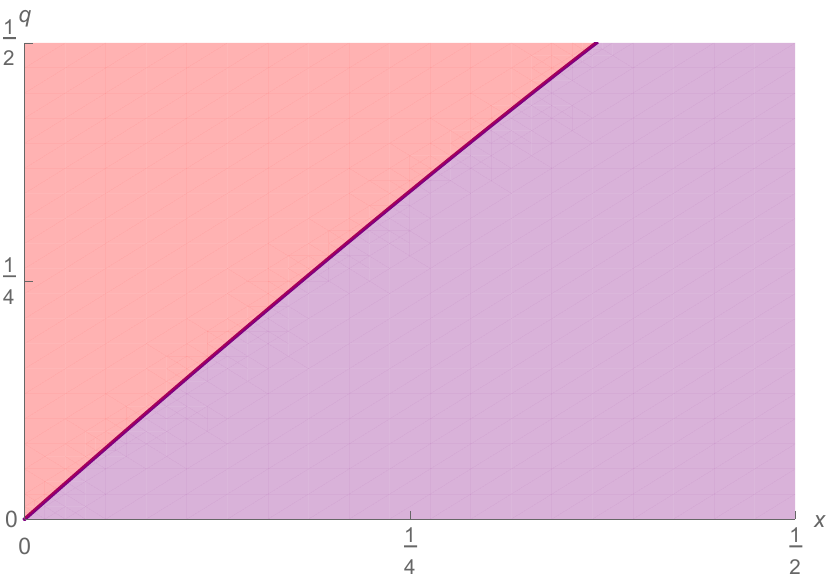}}
    \caption{Regions with one or two solutions (purple) and possibly more than two solutions (red) in $\omega$ and $q$ for $d=9$. }\label{fig: one or two solutions in omaga and q}
\end{figure}

\subsubsection{Bifurcations}


In this section, we provide the detailed classification of the types of bifurcation diagrams in $\mu$. To construct bifurcation diagrams, we observe that $G(x)=0$ is linear in $\mu$ and therefore can be solved with respect to it and is rewritten as: 
    \begin{equation}
    \begin{split}
        \label{eq: mu bifurcations}
        G_{\mu}(x):=&\frac{d \delta  \left(a \omega  \left((1-x)^d-1\right) (q+x-1)-b (\omega -1) \left(x^d-1\right) (q-x)\right)}{d (2 x-1)} + \\
         &\frac{c x \left(d (x-1  + q(r - 2rx+1))+(2 q-1) r (x-1)\right)}{d (2 x-1)}=\mu
        \end{split}
    \end{equation}

Therefore, the number of solutions for a given fixed value $\mu = \mu_0$ is determined by the number of intersections of the graph of $G_{\mu}(x)$ with the horizontal line $y=\mu_0$.  

Immediately, we get the first geometric property from Theorem \ref{st: general case}:
\begin{lemma}
\label{lem: geo. property}
\begin{itemize} The following statements hold:
    \item No horizontal line intersects the graph of the function $G_{\mu}$ in more than 4 points.
    \item The graph $G_{\mu}(x)$ intersects the $x$-axis in 0,2 or 4  points.  
\end{itemize}
\end{lemma}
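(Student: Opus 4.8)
The plan is to read both statements off the elementary correspondence between horizontal sections of the graph $y=G_\mu(x)$ and roots of the polynomial $G$. Recall that $G(x)$ is affine in $\mu$: writing $G(x)=H(x)-\mu(2x-1)$, where $H$ collects all terms of \eqref{eq:g(x)=0} not containing $\mu$, equation \eqref{eq: mu bifurcations} is precisely the rearrangement $G_\mu(x)=H(x)/(2x-1)$, valid for $x\neq\tfrac12$. Consequently, for any height $\mu_0$ the equation $G_\mu(x)=\mu_0$ is equivalent to $H(x)-\mu_0(2x-1)=0$, i.e.\ to $G(x)=0$ with the additive-mutation parameter set equal to $\mu_0$. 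This single observation drives both bullets.

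For the first bullet I would simply invoke Theorem \ref{st: general case}. A horizontal line $y=\mu_0$ meets the graph exactly at the solutions of $G(x)=0$ with $\mu=\mu_0$, and that equation has at most four solutions. The only bookkeeping point is that the counting argument behind Theorem \ref{st: general case} treats the additive mutation as a free parameter and does not genuinely rely on the biological restriction $\mu\in(0,1)$, so the bound of four persists for every real $\mu_0$; the removable or genuine singularity at $x=\tfrac12$ contributes at most one further point and so cannot raise the count beyond four.

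For the second bullet I would apply the first bullet to the line $y=0$: intersections with the $x$-axis are the roots in $[0,1]$ of $G_\mu(x)=0$, equivalently of $H(x)=0$ (the $\mu=0$ specialisation of $G$), of which there are at most four. To upgrade ``at most four'' to ``$0$, $2$ or $4$'' I would use a parity argument at the endpoints. Direct evaluation gives $H(0)=-b(1-\omega)\delta q<0$ and $H(1)=-q\big(c(r-1)+a\omega\delta\big)<0$, which are just the $\mu=0$ values of $G(0),G(1)$ already recorded in the stability discussion. Since $H$ is continuous on $[0,1]$ and strictly negative at both endpoints, it undergoes an even number of sign changes on the open interval; hence the number of crossing points of the graph with the $x$-axis is even, and combined with the bound of four this leaves only $0$, $2$ or $4$.

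The hard part will be bookkeeping rather than depth. Specifically I must confirm that (i) the bound of Theorem \ref{st: general case} is genuinely uniform in the value $\mu_0$ of the additive mutation, not tied to $\mu_0\in(0,1)$, and (ii) the parity statement is phrased in terms of transversal crossings, so that a non-generic tangency (a double root of $H$, which is exactly a bifurcation value and therefore exceptional) is handled consistently. Both become routine once the equivalence $G_\mu(x)=\mu_0\Leftrightarrow G(x)\big|_{\mu=\mu_0}=0$ and the two endpoint signs of $H$ are in hand.
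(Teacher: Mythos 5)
Your proposal is correct and follows essentially the same route as the paper: the first bullet is read off Theorem \ref{st: general case} via the equivalence $G_\mu(x)=\mu_0\Leftrightarrow G(x)|_{\mu=\mu_0}=0$, and the second bullet is the endpoint-sign parity argument ($G_{\mu_0}(0)<0$, $G_{\mu_0}(1)<0$, hence an even number of crossings, hence $0$, $2$ or $4$), which is exactly how the paper handles the $\mu=0$ case inside Theorem \ref{st: particular cases} and then cites it here.
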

By construction, the graph of $G_{\mu}$ has a vertical asymptote at $x = 1/2$. Hence, it is completely determined by the signs of the denominator near $x=1/2$ and the minima and maxima of the function $G_{\mu}$. 

\begin{theorem}
\label{thm: bifurcation}
    Up to the  permitted relative vertical movement, the magnitude of the maxima and minima and horizontal cut-offs, there exist 19 types of bifurcation diagrams, as approximately drawn in Figures \ref{fig: bifurcations 1}, \ref{fig: bifurcations 2} and \ref{fig: bifurcations 3}. 
\end{theorem}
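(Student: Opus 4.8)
The plan is to read every bifurcation diagram directly off the graph of the function $G_\mu$ from \eqref{eq: mu bifurcations}. Since the equilibria at a fixed additive-mutation level $\mu_0$ are exactly the abscissae of the intersections of this graph with the horizontal line $y=\mu_0$, classifying the diagrams in $\mu$ reduces to classifying the qualitative shapes of the graph of $y=G_\mu(x)$ on $[0,1]\setminus\{1/2\}$, up to the equivalence stated (vertical translation, the sizes of the extrema, and the positions of the horizontal cut-offs). First I would record the rigid global features that every such graph must share. By construction there is a single vertical asymptote at $x=1/2$; evaluating the sign of the numerator at $1/2$ over that of $(2x-1)$ shows that the two branches leave the asymptote in opposite vertical directions, which splits the analysis into exactly two mutually exclusive regimes (left $\to+\infty$ and right $\to-\infty$, or the reverse). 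I would then compute the finite endpoint data $G_\mu(0)$ and $G_\mu(1)$ (the latter negative, by the sign computation already used in the stability analysis) to fix where each arc begins and ends relative to the axis.

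The analytic core is to bound how much each branch can oscillate, equivalently to bound the number of fold points. Writing $G(x)=H(x)-\mu(2x-1)$ with $H$ the $\mu$-independent part, so that $G_\mu(x)=H(x)/(2x-1)$, the interior critical points of $G_\mu$ are the zeros of $K(x):=H'(x)(2x-1)-2H(x)$. The key observation is the identity $K'(x)=H''(x)(2x-1)$: the turning points of $K$, and hence the number of its zeros, are governed entirely by the sign changes of the convexity $H''$ together with the single sign change of $(2x-1)$ at $x=1/2$. Thus bounding the inflection points of $H$ — precisely the second-derivative computation illustrated for $G_1''$ in Figure \ref{fig:x and 1-x zoomed in} — bounds the number of folds, and together with Lemma \ref{lem: geo. property} (at most four intersections per horizontal line, and $0$, $2$ or $4$ crossings of the axis) pins down a short finite menu of admissible shapes for each branch: monotone, a single interior turning point, or the borderline single bump.

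With the per-branch shapes in hand I would enumerate. Combining each admissible left-branch shape with each admissible right-branch shape, subject to the opposite-asymptote constraint and to the global requirements that no level meet the union in more than four points and that the axis be crossed $0$, $2$ or $4$ times, yields a finite list of combinatorial graphs. To keep this list honest I would exploit the built-in symmetry of the model — the involution $x\mapsto 1-x$ together with the cooperator/defector and reward/punishment swap ($\omega\mapsto 1-\omega$, $a\leftrightarrow b$) — which fixes the asymptote, interchanges the two branches, and thereby pairs up the two asymptotic regimes, roughly halving the bookkeeping. For each surviving combinatorial graph I would overlay the relevant cut-off lines (the axis $\mu=0$ and the admissible $\mu$-window) and collapse diagrams that coincide under the stated equivalence, keeping the genuinely distinct representatives.

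The last step, and the main obstacle, is to certify that this enumeration yields exactly the nineteen types drawn in Figures \ref{fig: bifurcations 1}, \ref{fig: bifurcations 2} and \ref{fig: bifurcations 3}, neither more nor fewer. This has two delicate halves. On the completeness side one must rule out every shape that the fold bound and the four-intersection ceiling do not already forbid, which requires the $H''$ sign-change bound of the second step to hold uniformly in $d$ and across the admissible parameter box \eqref{eq: table of parameters}, rather than being merely sufficient in the generic case. On the realizability side one must exhibit, for each of the nineteen types, an admissible parameter vector producing it (the parameter sets accompanying the figures serve as the witnesses, to be verified). I expect the realizability direction and the precise, non-redundant case bookkeeping to be the most laborious part, since it is here that an over-merging of distinct diagrams or an omitted case would alter the count.
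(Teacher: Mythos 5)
Your proposal follows essentially the same route as the paper: fix the endpoint signs $G_\mu(0)>0$, $G_\mu(1)<0$, bound the number of critical points of $G_\mu$ by factoring the numerator of $G_\mu'$ — your identity $K'(x)=H''(x)(2x-1)$ is exactly the paper's factorization $F'(x)=(2x-1)f(x)$ with $f'=G'''$, whose unique zero (from Theorem \ref{st: general case}) gives at most four extrema — and then enumerate branch shapes using Lemma \ref{lem: geo. property} and the $x\mapsto 1-x$, $a\leftrightarrow b$, $\omega\mapsto 1-\omega$ symmetry. The final step (that the enumeration yields exactly nineteen realizable types) is left at the same informal, figure-supported level in the paper as in your plan, so your proposal matches the paper's proof in both substance and rigour.
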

To make the classification manageable, we omit drawing certain types of diagrams that are not dissimilar enough from each other. Thus, we assume that the magnitude of the oscillations of maxima and minima can be arbitrary; in some cases, the graphs of the two parts can also be shifted vertically in relation to each other, as long as they retain the property of not intersecting any horizontal line in more than four places. Lastly, during initial computations we did not impose any restrictions on $\mu$; however, it is clear that the graph can be ``compressed" as much as necessary by choosing the appropriate values of $\delta$ and $c$ -- this allows us to assume that we can ``cut off" the graphs vertically at any value of $\mu$. 

As it can be  seen that the number of equilibria changes if one of the following two things occurs: i) a saddle-node bifurcation, with two equilibria merging into a degenerate one (or emerge from a degenerate one), or ii) appearance of a new equilibrium when the graph of $G_{\mu}$ crosses the boundary of $[0,1]\times[0,1]$-square where $x$ and $\mu$ lie.

We have grouped bifurcation diagrams in Figures \ref{fig: bifurcations 1}, \ref{fig: bifurcations 2} and \ref{fig: bifurcations 3}  by similarity; wherever the diagrams are similar, we put them in an increasing order with regard to the number of solutions in case $\mu=0$.
\begin{figure}
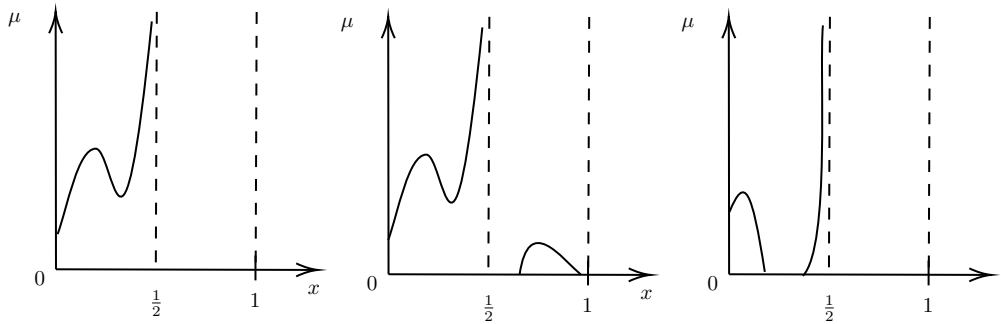

 \centering
\subfigure{
\tikzset{every picture/.style={line width=0.75pt}} 


}
    \caption{Bifurcation diagram, part 3 }
    \label{fig: bifurcations 3}
\end{figure}
\subsubsection{Large group size}
The key technical challenge in the mathematical analysis to obtain results in the previous section is that $d$ is large but finite. Recall in particular that equilibria of the replicator-mutator dynamics are roots in $[0,1]$ of the polynomial $G$ given in \eqref{eq: RME with incentive}, which is of degree $d+1$. Solving $G$ is analytically impossible when $d$ is larger than $4$ according to the celebrated Abel's impossibility theorem. In this section, we investigate the equation $G=0$ in the limit when $d$ tends to infinity. It turns out that in this limit, $G$ and its roots can be approximated by a simpler function.

Note again that, $G(x)=0$ is equivalent  to
\[
G_1(x)=G_2(x),
\]
where
\begin{equation*}
\label{eq: right hand side d to infinity}
   G_1(x):= a \omega \delta \left(1-(1-x)^d\right) (1-x-q)+b\delta (1-\omega) \left(1-x^d\right) (x-q),
    \end{equation*}
and 
\[
G_2(x):= -\frac{c x (d q (r (2 x-1)-1)+d (1-x)+r (2 q (1-x)+x-1))}{d}+\mu  (2 x-1).
\]
We start by investigating the limiting  behaviour of $G_1$ as $d$ tends to infinity. Near $x=0$, this function increases as $a \omega \delta \left(1-(1-x)^d\right) (1-q)$ and  near $x=1$, decreases as $b \delta(1 - q) (1 - x^d)  (1 - \omega)$.  Roughly in the  middle of the $[0,1]$-interval, both  functions $ (1 - x^d) $ and $\left(1-(1-x)^d\right)$ are very close to 1, and $G_1$ is well-approximated by a linear function 
    
    \[
    g_1(x):=  x\delta\left(b (1-\omega)-a \omega \right)+\delta(a (1-q) \omega -b q (1-\omega )),
    \]
    
See Figure \ref{fig:dtoinfinity} for an illustration. 
\begin{figure}[ht!]
    \centering
    \includegraphics[width=0.5\linewidth]{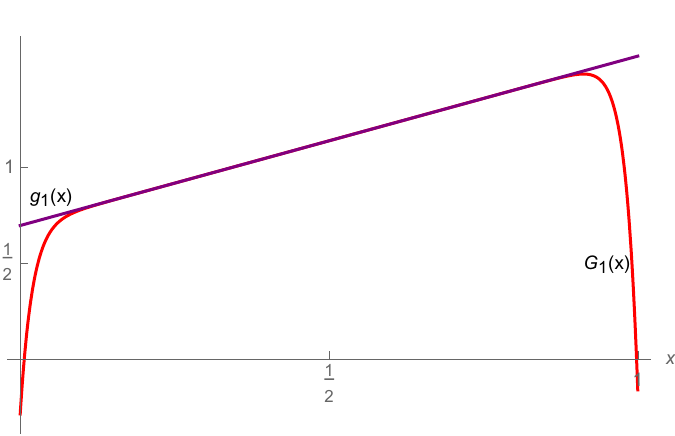}
    \caption{Approximation of the function $G_1(x)$ by a linear function $g_1(x)$ when $d\to\infty$ }
    \label{fig:dtoinfinity}
\end{figure}
For $G_2$, we have
\begin{equation}
\begin{split}
    \label{eq: the other side d to infinity}
    \lim\limits_{d\to\infty}G_2(x)&= x^2 c(2  q r-1)+x (c (1-q -qr)+2 \mu )-\mu=:g_2(x).
    \end{split}
\end{equation}
Whether the graph of this function is an upturned or downturned parabola depending on the parameters $c, q $ and $r$ -- observing that two of the three are not present in the left hand side of the equation.

Therefore, when $d\to\infty$, we effectively investigate the intersections of a line with a parabola, the number of which cannot exceed 2. Thus, it is  most likely that the limiting number of solutions of $G(x)=0$ will be equal to two. We have the following result.
\begin{lemma}
\label{lem: approximation}
With an appropriate choice of the parameters $\omega, a,b,\delta, r$ and $c$, the derivative of the approximating function $g_1(x)-g_2(x)$ will only have one zero and the value of the function in it will be large enough to prevent $G(x)$ from having any other zeros around it. 
\end{lemma}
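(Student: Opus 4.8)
The plan is to exploit the fact that the limiting object is genuinely elementary: since $g_2$ is the quadratic of \eqref{eq: the other side d to infinity} and $g_1$ is linear, the difference $h(x):=g_1(x)-g_2(x)$ is a quadratic polynomial. First I would write it out as
\begin{equation*}
h(x)=\alpha x^2+\beta x+\gamma,\qquad
\alpha=-c(2qr-1),\quad
\beta=\delta\big(b(1-\omega)-a\omega\big)-c(1-q-qr)-2\mu,\quad
\gamma=\delta\big(a(1-q)\omega-bq(1-\omega)\big)+\mu .
\end{equation*}
Choosing the parameters so that $2qr\neq 1$ keeps $\alpha\neq 0$, so $h'(x)=2\alpha x+\beta$ is a non-degenerate affine function with the single zero $x_0=-\beta/(2\alpha)$. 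This already gives the first assertion (exactly one critical point, namely the vertex of the parabola), and it is the only step that is completely unconditional.

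Next I would estimate the extremal value $h(x_0)=\gamma-\beta^2/(4\alpha)$. The key observation is that $\delta$ enters $\beta$ and $\gamma$ \emph{linearly} while $\alpha$ is independent of $\delta$. Hence, provided we pick parameters with $b(1-\omega)\neq a\omega$, letting $\delta$ grow gives
\[
h(x_0)\;\sim\;-\frac{\delta^2\big(b(1-\omega)-a\omega\big)^2}{4c(1-2qr)}\qquad(\delta\to\infty),
\]
so $|h(x_0)|$ grows quadratically in $\delta$ and can be made as large as we wish, with sign equal to that of $2qr-1$ (i.e. opposite to $\alpha$). In particular the vertex lies on the far side of the $x$-axis, so $h$ has two transversal real roots in between, matching the heuristic ``a line meets a parabola in at most two points.'' This is the second, quantitative assertion of the statement.

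It then remains to transfer this picture from $h$ to the true function $G$. Recalling $G=G_1-G_2$ and the approximations $G_1\approx g_1$, $G_2\approx g_2$, I would fix a compact interior window $[\epsilon,1-\epsilon]$ and bound the discrepancy: the incentive error $G_1-g_1=-a\omega\delta(1-x)^d(1-x-q)-b(1-\omega)\delta x^d(x-q)$ is at most $\delta\cdot\mathrm{const}\cdot(1-\epsilon)^d$, while $G_2-g_2=O(1/d)$ uniformly. Thus $\sup_{[\epsilon,1-\epsilon]}|G-h|=:\eta$ can be driven below any threshold by taking $d$ large \emph{after} fixing the (large-$\delta$) parameters. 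On a fixed neighbourhood of the vertex one has $|h|\ge \tfrac12|h(x_0)|\gg\eta$, so $G$ keeps the sign of $h$ there and cannot vanish ``around'' $x_0$; near each of the two transversal roots of $h$, where $|h'|$ is bounded below, a monotonicity/intermediate-value (or Rouch\'e-type) argument produces exactly one zero of $G$. Combined, this forces precisely two interior zeros, as claimed.

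\textbf{Main obstacle.} The hard part will be the last paragraph: making ``large enough to prevent any other zeros around it'' rigorous means producing a \emph{uniform} lower bound on $|h|$ near the vertex and on $|h'|$ near the two roots, and checking that these dominate the error $\eta$. The delicate point is that the error itself carries a factor $\delta$, so the argument only closes because the vertex value grows like $\delta^2$ while the error grows like $\delta$ times the exponentially small factor $(1-\epsilon)^d$; one must therefore order the quantifiers carefully (fix $\delta$ large, then send $d\to\infty$). A secondary difficulty is the endpoint regions $x\to 0,1$, where the linear approximation $g_1$ degrades; there I would fall back on the explicit boundary behaviour of $G_1$ noted before the lemma (growth like $a\omega\delta\big(1-(1-x)^d\big)(1-q)$ near $0$ and symmetric decay near $1$) to rule out additional crossings.
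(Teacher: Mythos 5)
Your proposal follows the same strategy as the paper's proof: write $h=g_1-g_2$ as an explicit quadratic, observe that $h'$ is affine with a single zero at the vertex, make the relevant non-degeneracy quantities (the value of $h$ at the vertex, equivalently the discriminant, equivalently $|h'|$ at the roots) large by pushing one parameter, and then transfer the zero count from $h$ to $G$ by bounding $\sup|G-h|$ away from the endpoints as $d\to\infty$. The one substantive difference is \emph{which} parameter you push: you send $\delta\to\infty$, whereas the paper makes the discriminant and the vertex value large by increasing $\mu$ (both are quadratic in $\mu$ with positive leading coefficient). Your choice is arguably the more defensible one, since the lemma explicitly lists $\omega,a,b,\delta,r,c$ as the tunable parameters (not $\mu$), and the paper's own parameter table \eqref{eq: table of parameters} confines $\mu$ to $(0,1)$, so ``increasing the value of $\mu$'' arbitrarily is in tension with the model. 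Your error accounting (the incentive error $O(\delta(1-\epsilon)^d)$ versus the $O(1/d)$ cost error, and the need to fix $\delta$ before sending $d\to\infty$) is also spelled out more carefully than in the paper, which only asserts that the discrepancy ``goes to $0$''.

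One slip to flag: in your regime $\beta\sim\delta\,(b(1-\omega)-a\omega)$ while $\alpha$ is independent of $\delta$, so the vertex $x_0=-\beta/(2\alpha)$ \emph{leaves} the interval $[0,1]$ as $\delta$ grows, and so does one of the two roots of $h$ (the surviving root tends to the zero of the dominant linear part, $-\gamma/\beta$). Hence the final claim ``this forces precisely two interior zeros'' does not follow; on $[0,1]$ your $h$ is eventually monotone with $|h'|\gtrsim\delta\,|b(1-\omega)-a\omega|$ and has at most one interior zero. This is an over-claim relative to the lemma, which only asserts uniqueness of the critical point of $h$ and a lower bound on $|h|$ there; for that weaker statement your argument (and indeed the uniform lower bound on $|h'|$ over all of $[0,1]$ that your regime actually delivers) suffices, and the paper's large-$\mu$ version suffers from exactly the same drift of the vertex out of $[0,1]$. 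If you want the two-crossing picture advertised before the lemma, you would instead need the vertex to stay in $(0,1)$, which requires balancing $b(1-\omega)-a\omega$ against the other coefficients rather than letting one term dominate.
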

\section{Summary and outlook}
\label{sec: summary}
Understanding the evolution and mechanisms for enhancing cooperation is a central question in evolutionary biology, economics and social sciences. The present paper pertains to this research direction. We have focused on the replicator-mutator dynamics, with combined additive and multiplicative mutations, for public goods games both in the absence or presence of institutional incentives. 
Through delicate and rigorous mathematical analysis, our results have offered new insight into the role of institutional incentives and of the effects of combined additive and multiplicative mutations on the evolution and promotion of cooperation in the context of public goods games. 
 
In this work, we have focused on models with one population and two strategies. However, in more complex applications, it is necessary to employ models with more than two strategies and/or more than one population. For instance, the replicator-mutator dynamics used in \cite{kauhanen2020replicator} to study language dynamics contains at least two populations. When consider more complex types of incentives such as peer ones \cite{sigmund2001reward,han2024evolutionary}, or when reciprocity \cite{ohtsuki2006leading,schmid2021unified} and commitment \cite{Han:2014tl} mechanisms are considered, the strategy space can be extremely large. As another instance,  the baseline models in recent works that employ evolutionary game theory to study AI governance consist of  three populations (of AI regulators, AI users and  AI developers) \cite{alalawi2024trustairegulationdiscerning} and four populations (of AI regulators, AI users,  AI developers, and commentariat) \cite{balabanova2025mediaresponsibleaigovernance}. In light of these important applications, in future works we will generalise this paper to multi-strategies and multi-populations evolutionary game models.
\section{Appendix}
In this appendix we present the detailed proofs of the results of the previous sections.
\subsection{Invariant space of the replicator-mutator dynamics}
\label{sec: invariant}
As already mentioned, the specific form of \eqref{eq: RME} ensures that the simplex
\[
S_n:=\{x=(x_1,\ldots, x_n)\in \mathbb{R}^n: \sum_{i=1}^nx_i=1\quad\text{and}\quad x_i\geq 0 \quad\text{for}\quad i=1,\ldots, n\}
\]
is invariant under \eqref{eq: RME} provided that initially it belongs to this set. In fact, let $s(t):=\sum_{i=1}^n x_i(t)$. Then
\begin{align*}
    \dot{s}&=\sum_{i=1}^n \dot{x}_i
    \\&\overset{\eqref{eq: RME}}{=}\sum_{i=1}^n\Big[\sum_{j=1}^n x_j f_j(\x)q_{ji}- x_i \f(\x)-\mu (n x_i-1)\Big]
    \\&=\sum_{j=1}^n x_j f_j(\x)\sum_{i=1}^n q_{ji}- (\f(\x)+\mu n)\sum_{i=1}^n x_i+ n\mu
    \\&\overset{\eqref{eq: sum of qij}}{=}\sum_{j=1}^n x_j f_j(\x)- (\f(\x)+\mu n)\sum_{i=1}^n x_i+ n\mu
    \\&=(1-s)(\f(\x)+n\mu)\quad (\text{since}~~\f(\x)=\sum_{j=1}^m x_j f_j(\x)).
\end{align*}
It implies that $s=1$ is a stationary point. Therefore, if $s(0)=1$ then $s(t)=1$ for all $t>0$. In other words, if the initial data satisfies $\sum_{i=1}^n x_i(0)=1$, then for any $t>0$, we also have $\sum_{i=1}^n x_i(t)=1$. This condition is biologically consistent since $
\{x_i\}$ model the frequencies of the strategies and they make up the whole population. 

\subsection{Proof of Proposition \ref{st: equilibria delta zero}}
  \begin{proof}[Proof of Proposition \ref{st: equilibria delta zero}]
        \begin{enumerate}
        \item Substituting $\mu=0$ and $q=0$ turns $G(x)=0$ into
        \[
        \frac{c (x-1) x (d-r)}{d} = 0,
        \]
        which clearly has roots at $0$ and $1$. 
        \item When only $\mu=0$, we get 
        \[
       G_{\mu}(x):= c x \left(\frac{(2 q-1) r (x-1)}{d}-2 q r x+q r+q+x-1\right) = 0.
        \]
        Once again, $x_0=0$ is an obvious solution, but so is 
        \[
        x_1 = \frac{(d-2) q r+d (q-1)+r}{d (2 q r-1)-2 q r+r}.
        \] By the conventions of our problem, this root must lie in $[0,1]$ interval; hence, we need to determine where the values of the parameters guarantee this outcome are.  Expressing $q$ through $r$ and $d$ yields
        \[
        q>\frac{d-r}{(d-2) r+d},
        \]
as in the statement of the theorem. 

To show that $x_1<\frac12$, we need to observe that $G_{\mu}\left(\frac12\right) = \frac{c (2 q-1) (d-r)}{4 d}<0$.

Thus, if the multiplicative mutation is sufficiently big, it enhances cooperation.
\item   The values of the function  $G(x)$ at $x=0$ and $x=1$ have opposite signs:
\begin{equation*}
    \begin{split}
        G(0) &= \mu>0\  \text{and}\\ 
        G(1)& = c q(1- r)-\mu<0 .
    \end{split}
\end{equation*}
        Since the function $G(x)$ is quadratic, the existence of a unique root immediately follows.

        Solving the equation explicitly gives two solutions:
\begin{small}
    \begin{equation}
    \begin{split}
        \label{eq: solutions quadratic}
        x_1&=\frac{2 d \mu }{d \left(\sqrt{\frac{c^2 ((d-2) q r+d (q-1)+r)^2}{d^2}+4 c \mu  q (r-1)+4 \mu ^2}+2 \mu \right)-c ((d-2) q r+d (q-1)+r)},\\
        x_2 &=  -\frac{2 d \mu }{d \left(\sqrt{\frac{c^2 ((d-2) q r+d (q-1)+r)^2}{d^2}+4 c \mu  q (r-1)+4 \mu ^2}-2 \mu \right)+c ((d-2) q r+d (q-1)+r)}
        \end{split}
    \end{equation}
\end{small}
However, under the imposed restrictions on the parameters the root $x_2$ does not belong to the $[0,1]$ (see Lemma \ref{st:x2 doesntbelong here} below).  Therefore, our unique root always coincides with $x_1$.

Lastly, 
\[
G\left(\frac12\right) = G_{\mu}\left(\frac12\right)<0,
\]
thus forcing $x_1<\frac12$.

        \end{enumerate}
    \end{proof}
    \begin{lemma}
        \label{st:x2 doesntbelong here}
        The solution $x_2$ from (\ref{eq: solutions quadratic}) does not belong in the $[0,1]$-interval.
    \end{lemma}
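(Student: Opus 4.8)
The plan is to collapse the unwieldy formula \eqref{eq: solutions quadratic} to a transparent expression and then reduce the whole statement to the single sign fact $G(1)<0$. Recall $G(x)=Ax^2+Bx+\mu$ with $A,B$ as in \eqref{eq: AB}. First I would record two preparatory identities that strip the clutter from \eqref{eq: solutions quadratic}: writing $P:=c\big((d-2)qr+d(q-1)+r\big)$, a direct expansion of $B$ shows $P=d(B+2\mu)$, so $P/d=B+2\mu$; and the radicand under the square root equals exactly the discriminant, i.e.
\[
\Delta:=\frac{c^2\big((d-2)qr+d(q-1)+r\big)^2}{d^2}+4c\mu q(r-1)+4\mu^2=B^2-4A\mu.
\]
Substituting these into \eqref{eq: solutions quadratic} cancels the $\pm 2d\mu$ terms in the denominator and yields the compact form $x_2=\dfrac{-2\mu}{\sqrt{\Delta}+B}$ (which, after rationalisation, is the ``$+$''-root $\tfrac{-B+\sqrt{\Delta}}{2A}$ whenever $A\neq0$).

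Next I would isolate the identity that drives everything. Using $P/d=B+2\mu$, a one-line computation gives
\[
\Delta-\Big(\tfrac{P}{d}\Big)^2=B^2-4A\mu-(B+2\mu)^2=-4\mu(A+B+\mu)=-4\mu\,G(1).
\]
A direct evaluation gives $G(1)=A+B+\mu=cq(1-r)-\mu$, which is strictly negative since $r>1$ and $c,q,\mu>0$. Hence $\Delta-(P/d)^2=-4\mu\,G(1)>0$, so that $\sqrt{\Delta}>\lvert P/d\rvert\geq -P/d$.

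With the compact form in hand I would finish by a short sign analysis of the denominator $D:=\sqrt{\Delta}+B$. If $D>0$ then $x_2=-2\mu/D<0$, so $x_2\notin[0,1]$. If $D<0$ then $x_2>0$, and I would show $x_2>1$: using $-2\mu-B=-P/d$ we get $x_2-1=\dfrac{-2\mu-D}{D}=\dfrac{-(\sqrt{\Delta}+P/d)}{D}$, whose numerator is negative by the displayed inequality and whose denominator is negative, so $x_2-1>0$. In either case $x_2\notin[0,1]$. The boundary case $D=0$ forces $\sqrt{\Delta}=-B$, hence $A=0$ and $G$ degenerates to a linear function; there the quadratic formula splits and the genuine root is the one recorded as $x_1$, so no finite $x_2\in[0,1]$ occurs. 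Under the standing restrictions \eqref{eq: table of parameters} the radicand is positive (as noted after Proposition \ref{st: equilibria delta zero}), so $\Delta>0$, the two roots are real and distinct, and the above cases are exhaustive. The main obstacle is purely bookkeeping: establishing the two preparatory identities $P=d(B+2\mu)$ and $\Delta=B^2-4A\mu$ cleanly enough that $\Delta-(P/d)^2=-4\mu\,G(1)$ drops out, and correctly tracking which square-root branch the formula for $x_2$ selects through the rationalisation; once these are settled, the localisation of $x_2$ outside $[0,1]$ is immediate from $G(1)<0$.
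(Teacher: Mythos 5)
Your proof is correct, and it takes a genuinely different route from the paper's. The paper treats $x_2$ as a function of $q$, shows by an explicit (and somewhat delicate) derivative computation that $x_2(q)$ is increasing, notes the singularity at $q=\tfrac{d-r}{r(d-2)}$, and concludes from the endpoint values $x_2(0)>1$ and $x_2(1/2)<0$ that the branch never enters $[0,1]$. You instead collapse the formula to $x_2=\tfrac{-2\mu}{\sqrt{\Delta}+B}$ via the identities $P=d(B+2\mu)$ and $\Delta=B^2-4A\mu$ (both of which I have checked and which do hold), and then reduce everything to the single sign fact $G(1)=cq(1-r)-\mu<0$ through the identity $\Delta-(B+2\mu)^2=-4\mu\,G(1)$; the case split on the sign of $\sqrt{\Delta}+B$ then places $x_2$ strictly below $0$ or strictly above $1$, with the degenerate case $A=0$ handled separately. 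Your argument is shorter, purely algebraic, and avoids both the differentiation and the implicit reliance on the singularity to justify that an increasing function starting above $1$ and ending below $0$ skips $[0,1]$; it also makes transparent that the real content is just $G(0)=\mu>0>G(1)$ forcing exactly one root of the quadratic into $[0,1]$, namely the one labelled $x_1$. What the paper's approach buys in exchange is explicit monotonicity information about $x_2$ as a function of $q$, though that information is not used elsewhere. One small point worth making explicit in your write-up: in the case $\sqrt{\Delta}+B<0$ you use $\sqrt{\Delta}+P/d>0$, which follows from $\sqrt{\Delta}>\lvert P/d\rvert$; this in turn needs $\mu>0$ strictly (so that $-4\mu G(1)>0$ rather than $\geq 0$), which is guaranteed by the standing assumptions in \eqref{eq: table of parameters} and by the fact that the lemma is only invoked in the case $\mu,q>0$ of Proposition \ref{st: equilibria delta zero}.
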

\begin{proof}[Proof of Proposition \ref{st:x2 doesntbelong here}]
Consider $x_2$ as a function of the parameter $q$:
\[
x_2(q) = -\frac{2 d \mu }{d \left(\sqrt{\frac{c^2 ((d-2) q r+d (q-1)+r)^2}{d^2}+4 c \mu  q (r-1)+4 \mu ^2}-2 \mu \right)+c ((d-2) q r+d (q-1)+r)}.
\]
This function has a singularity when $q = \frac{d-r}{r(d-2)}$ (a point that may belong to $[0,1/2]$); we set out to show that it is an increasing function of $q$.
\small
\[
x'_2(q) = \frac{2 c d \mu  \left(\frac{c ((d-2) r+d) ((d-2) q r+d (q-1)+r)+2 d^2 \mu  (r-1)}{d \sqrt{\frac{c^2 ((d-2) q r+d (q-1)+r)^2}{d^2}+4 c \mu  q (r-1)+4 \mu ^2}}+(d-2) r+d\right)}{\left(d \left(\sqrt{\frac{c^2 ((d-2) q r+d (q-1)+r)^2}{d^2}+4 c \mu  q (r-1)+4 \mu ^2}-2 \mu \right)+c ((d-2) q r+d (q-1)+r)\right)^2}.
\]
\normalsize
The denominator in this expression is a square and therefore positive, and so is the multiplier $2cd\mu$; therefore we only need to show that 
\[
\frac{c ((d-2) r+d) ((d-2) q r+d (q-1)+r)+2 d^2 \mu  (r-1)}{d \sqrt{\frac{c^2 ((d-2) q r+d (q-1)+r)^2}{d^2}+4 c \mu  q (r-1)+4 \mu ^2}}+(d-2) r+d>0.
\]
Through  transferring the fraction to the right hand side, multiplying by the (positive!) square root in the denominator and squaring both sides, we arrive at the inequality
\begin{equation*}
\begin{split}
&d^2 ((d-2) r+d)^2 \left(\frac{c^2 ((d-2) q r+d (q-1)+r)^2}{d^2}+4 c \mu  q (r-1)+4 \mu ^2\right)>\\&\left(c ((d-2) r+d) ((d-2) q r+d (q-1)+r)+2 d^2 \mu  (r-1)\right)^2,
\end{split}
\end{equation*}
which reduces to 
\[
4 d^2 \mu  (d-r) (c (r-1) ((d-2) r+d)+4 (d-1) \mu  r)>0,
\]
which is an always true statement. Note that this inequality yields positivity of the derivative  since $(d-2)r + d>0$.

Hence, since $x_2(q)$ has a singularity and is an increasing function, we only need to check that $x_2(0)>1$ and $x_2(1/2)<0$. We have 
\[
x_2(0)  = \frac{2 d \mu }{c (d-r)-\sqrt{c^2 (d-r)^2+4 d^2 \mu ^2}+2 d \mu }
\]
It is easy to see that $c (d-r)-\sqrt{c^2 (d-r)^2+4 d^2 \mu ^2}<0$ and therefore $c (d-r)-\sqrt{c^2 (d-r)^2+4 d^2 \mu ^2}+2 d \mu<2d\mu$, which immediately yields the desired result. 
Furthermore, 
\[
x_2(1/2) = \frac{2 \mu }{c(1- r)},
\]
clearly a negative expression. Therefore, $x_2(q)$ does not lie in the $[0,1]$-interval, which was what we set out to show. 
\end{proof}
\subsection{Proof of Proposition \ref{st: bifurcations degrre two}}
\begin{proof}[Proof of Proposition \ref{st: bifurcations degrre two}]
We prove points 1 and 2 in analogous ways, though applying  implicit function theorem:
\[
\frac{\partial x_1}{\partial\mu}=-\frac{\partial_{\mu} G}{\partial_x G}(x_1)=-\frac{1-2x_1}{\partial_x G(x_1)}>0
\]
since $1-2x_1>0, \partial_x G(x_1)<0$ (because $x_1$ is stable). Therefore, $x_1$ is increasing as a function of $\mu$.

Similarly,
\[
\frac{\partial x_1}{\partial q}=-\frac{\partial_q G}{\partial_x G}(x_1)>0
\]
since 
\[
\partial_q G=\frac{c}{d}x_1 (r(d-1)(1-2x_1)-(r-d))>0,
\]
from which it follows that $x_1$ increases as a function of $q$ as well.

To demonstrate point 3, we just need to consider the explicit form of $x_1$ from point 3 of  Proposition \ref{st: bifurcations degrre two}. Sending $d\to\infty$
gives an explicit expression:
\[
\lim\limits_{d\to\infty}x_1 = \frac{2 \mu }{\sqrt{c^2 (q r+q-1)^2+4 c \mu  q (r-1)+4 \mu ^2}-c (q r+q-1)+2 \mu }
\]
\end{proof}
\subsection{Proof of Theorem \ref{st: general case}}
\begin{proof}[Proof of Theorem \ref{st: general case}]
We employ Rolle's theorem in Calculus: if a derivative $F'(x)$ of the function $F(x)$, has $k$ zeros in some interval, the function $F(x)$ has fewer than $k+1$ zeros in that interval. 

Therefore, after demonstrating that in the general case $G'''(x)$ has exactly one zero in $[0,1]$, we will be able to claim that $G''(x)$ will have two or fewer zeros, $G'(x)$ three or fewer, and subsequently obtain our claim for $G(x)$. 

   Differentiating three times rids us of the quadratic part of the function:
    \begin{small}
    \begin{equation}
    \label{eq: G'''(x)}
    \begin{split}
        G'''(x) &=(d+1) d (d-1)  \delta  \left(-b  (1-\omega) x^{d-3} (x-q\frac{d-2}{d+1})+a \omega  (1-x)^{d-3} \left((1-x)-q\frac{d-2}{d+1} \right)\right).
        \end{split}
    \end{equation}
\end{small}
We are therefore looking for the roots of the equation 
\begin{equation*}
    b  (1-\omega) x^{d-3} \left(x-q\frac{d-2}{d+1}\right)=a \omega  (1-x)^{d-3} \left((1-x)-q\frac{d-2}{d+1} \right),
    \end{equation*}
which is equivalent to
\begin{equation}
    \label{eq: third derivative general case zeros}
     x^{d-3}\left(x - q\frac{d-2}{d+1}\right) = \frac{a \omega}{b(1-\omega)}(1-x)^{d-3}\left(1-x-q\frac{d-2}{d+1}\right).
    \end{equation}
If we denote $H(x):=x^{d-3}\left(x - q\frac{d-2}{d+1}\right)$, then the equation  (\ref{eq: third derivative general case zeros}) has the form 
\[
H(x) = K H(1-x)
\]
for some constant coefficient $K>0$. The graph of the function on the right-hand side is the vertically ($K$ times) stretched reflection respect to the line $x = \frac{1}{2}$ of the graph on the left-hand side.

     \begin{figure}[ht!]
        \centering
        \includegraphics[width=0.5\linewidth]{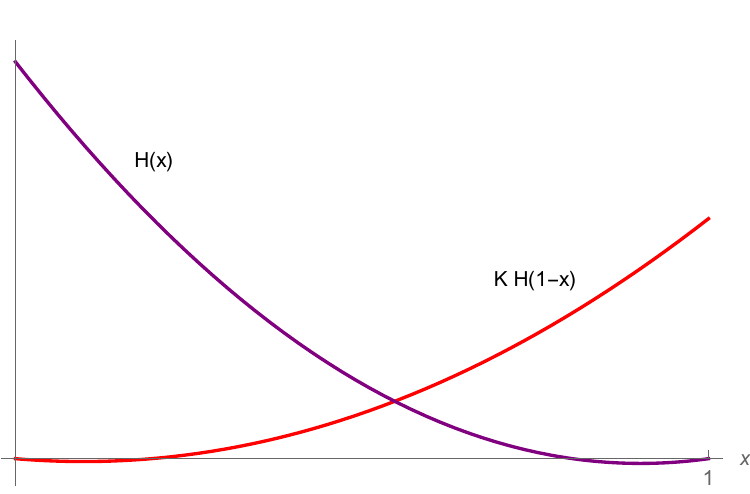}
        \caption{ The graphs of $H(x)$ and $KH(1-x)$.}
        \label{fig: third derivative}
    \end{figure}
Consider the left-hand side first, that is, $ x^{d-3}\left(x - q\frac{d-2}{d+1}\right)$. This function is negative when $0<x<q\frac{d-2}{d+1} $ and has a minimum at $x = q\frac{(d-3) }{d+1}$. Therefore, when $0<x<q\frac{d-3 }{d+1}$, the function is negative and decreasing, it is negative but increasing when $q\frac{d-3 }{d+1}<x<q\frac{d-2 }{d+1}$ and positive increasing when $x>q\frac{d-2}{d+1} $.

In contrast, the function  $(1-x)^{d-3}\left(1-x-q\frac{d-2}{d+1}\right)$ is positive and decreasing when $x<1-q\frac{d-2}{d+1}$, turns zero at $x = 1-q\frac{d-2}{d+1}$ , is negative and decreasing when $1-q\frac{d-2}{d+1}<x<1-q\frac{d-3}{d+1}$ and is negative but increasing when $1-q\frac{d-3}{d+1}<x<1$. Note that this can also be seen from treating the left-hand side as the reflection of the right-hand side, see Figure \ref{fig: third derivative} for an illustration.

Note that since $0\le q\le\frac12$, the points of interest are arranged in the following (strict) order:
\[
0<q\frac{d-3}{d+1}<q\frac{d-2}{d+1}<\frac{1}{2}<1 - \frac{d-2}{d+1}<1-q\frac{d-3}{d+1}<1.
\]

Therefore, the only interval where the intersections might take place is in the interval $q\frac{d-2}{d+1}<x<1 - \frac{d-2}{d+1}$, where both functions are positive (they do not have the same signs at any other intervals).

However, on that interval, the left hand side is strictly increasing, while the right hand side is strictly decreasing (multiplication by $K>0$ does not change that); this means that there may exist only one intersection. 
    \end{proof}

\subsection{Proof of Theorem \ref{prop: sufficient incentive}}
\begin{proof}[Proof of Theorem \ref{prop: sufficient incentive}]
We recall that equilibria of the replicator-mutator dynamics are solutions in the interval $[0,]$ of the following polynomial equation: 
\begin{align*}
G(x)&=\frac{c}{d} x \Big(rq(d-1)+(r-d)(1-q)-x(2rq(d-1)+(r-d))\Big)-\mu(2x-1)\notag
\\&\qquad+a \omega  \delta (1-x-q) \Big(1-(1-x)^d\Big)+b(1-\omega)\delta (x-q)(1-x^d).
\end{align*}
We have
\begin{align*}
&G(0)=\mu-b(1-\omega)\delta q, \quad G(1)=cq(1-r)-\mu-a\,\omega\,\delta\, q<0,
\\& G(1/2)=\frac{c}{2d}(r-d)(\frac{1}{2}-q)+ a\omega \delta (\frac{1}{2}-q)(1-(1/2)^d)+b (1-\omega)\delta (1/2-q)(1-(1/2)^d)  
\end{align*}
When
\begin{equation}
\label{eq: lower bound}
\delta>\max\Big\{\frac{\mu}{b(1-\omega)q}, \frac{c}{2d}\frac{d-r}{(a\omega+b(1-\omega))(1-(1/2)^d)}\Big\},    
\end{equation}
we have $G(0)<0, \quad G(1/2)>0, \quad G(1)<0$. Therefore, there is at least one stable equilibrium $x^*$, which is between $1/2$ and $1$. Thus it is larger than the stable equilibrium point in non-incentive dynamics. Next we will analyse the behaviour of $x^*$ as a function of each of the model's parameter. To this end, we calculate the derivative of $x^*$ with respect to the parameter using the implicit theorem.


\subsubsection*{Behaviour w.r.t. $\mu$}
Similarly as in the case of no institutional incentive, we calculate
\[
\frac{\partial x^*}{\partial\mu}=-\frac{\partial_{\mu} G}{\partial_x G}(x^*)=-\frac{1-2x^*}{\partial_x G(x^*)}<0
\]
since $x^*>1/2$ and $\partial_x G(x^*)<0$. Therefore, in constrast to the no-institutional incentive case, $x^*$ is decreasing as a function of $\mu$.
\subsubsection*{Behaviour w.r.t. $\delta$}
\[
\partial_\delta G(x^*, \delta)=a \omega  (1-x^*-q) \Big(1-(1-x^*)^d\Big)+b(1-\omega) (x^*-q)(1-(x^*)^d).
\]
It follows from the equation $G(x^*)=0$ that
\begin{align}
\partial_\delta G(x^*, \delta)&=a \omega  (1-(x^*)-q) \Big(1-(1-x^*)^d\Big)+b(1-\omega) (x^*-q)(1-(x^*)^d)\notag
\\&=-\frac{1}{\delta}\Bigg(\frac{c}{d} x^* \Big(rq(d-1)+(r-d)(1-q)-x^*(2rq(d-1)+(r-d))\Big)-\mu(2x^*-1)\Bigg).\label{eq: derivative delta}
\end{align}
Since $x^*>\frac{1}{2}$, we have
\begin{align*}
rq(d-1)+(r-d)(1-q)-x^*(2rq(d-1)+(r-d))&\leq rq(d-1)+(r-d)(1-q)-\frac{1}{2}(2rq(d-1)+(r-d))
\\&=(r-d)(\frac{1}{2}-q)\leq 0.
\end{align*}
Therefore, using $x^*>\frac{1}{2}$ again, 
\[
\frac{c}{d} x^* \Big(rq(d-1)+(r-d)(1-q)-x^*(2rq(d-1)+(r-d))\Big)-\mu(2x^*-1)< 0.
\]
From this and \eqref{eq: derivative delta}, we deduce 
\[
\partial_\delta G(x^*, \delta)>0.
\]
Hence, since $\partial_x G(x^*,\delta)<0$, we have
\[
\frac{\partial x^*}{\partial \delta}=-\frac{\partial_\delta G(x^*, \delta)}{\partial_x G(x^*,\delta)}>0
\]
Thus the stable equilibrium $x^*$ is increasing with respect to $\delta$. In other words, the more incentive is provided, the bigger the frequency of cooperation is. 

\subsubsection*{Behaviour w.r.t. $\omega$}
\[
\partial_\omega G= a\delta (1-x^*-q) \Big(1-(1-x^*)^d\Big)-b\delta (x^*-q)(1-(x^*)^d).
\]
Suppose $a=b$. Then
\begin{align*}
\partial_\omega G(x^*,\omega)&= \delta \Big[(1-x^*) \Big(1-(1-x)^d\Big)-x^*(1-(x^*)^d)\Big]+ q\delta \Big((1-x^*)^d-(x^*)^d)
\\&=\delta (1-x^*)x^* \sum_{i=0}^{d-1}\Big((1-x^*)^i-(x^*)^i\Big)+ q\delta \Big((1-x^*)^d-(x^*)^d)
\end{align*}
Since $x^*>\frac{1}{2}$, $(1-x^*)^i<(x^*)^i$ for all $i=0,\ldots, d$. This implies $\partial_\omega G(x^*,\omega)<0$. Thus

\[
\frac{\partial x^*}{\partial \omega}=-\frac{\partial_\omega G(x^*, \delta)}{\partial_x G(x^*,\omega)}<0
\]
Hence $x^*$ is decreasing with respect to $\omega$. Therefore, $0.5<x^*_R<x^*_{\text{mixed}}<x^*_{P}<1$, where $x^*_I, I\in\{R,\text{mixed}, P\}$ respectively denotes the stable equilibrium when using reward, mixed and punishment incentive. Thus, punishment is the most effective incentive in promoting cooperation. This generalizes the result of \cite{dong2019competitive} from $q=0$ to 
\subsubsection*{Behaviour w.r.t. $q$}
\[
\partial_q G(x^*,q)=\frac{c}{d}x^* (r(d-1)(1-2x^*)-(r-d))-a\omega \delta (1-(1-x^*)^d)-b(1-\omega)\delta (1-(x^*)^d).
\]
In general, this function can assume positive, ass well as negative values. Nonetheless, there are some observations that we can make.

The quadratic part of $\partial_qG(x)$ is
\[
c x \left(\frac{2 r (x-1)}{d}-2 r x+r+1\right),
\]
a function that assumes positive values when $x\in\left[0,\frac{(d-2) r+d}{2 (d-1) r}\right] $ (the latter need not be smaller than 1). 

On the other hand, $-a\omega \delta (1-(1-x^*)^d)-b(1-\omega)\delta (1-(x^*)^d)$ is a strictly negative expression. It has a unique minimum at  the point $x = \frac{\sqrt[d-1]{\frac{a \omega }{b (1-\omega )}}}{\sqrt[d-1]{\frac{a \omega }{b (1-\omega )}}+1}$ (obtained by differentiating and finding the unique zero of the derivative). 

Therefore, the largest value of $-a\omega \delta (1-(1-x^*)^d)-b(1-\omega)\delta (1-(x^*)^d)$ on $[0,1]$ is obtained either when $x=0$ or when $x=1$ -- these are, respectively, $-b \delta  (1-\omega )$ and $-a \delta  \omega$.

Therefore, if the parabolic part at its maximum assumes a value smaller than $\min\left(b \delta  (1-\omega ),a \delta  \omega\right)$, it will be guaranteed that the entire function is less than zero.  The maximum of the quadratic part is equal to $\frac{c ((d-2) r+d)^2}{8 (d-1) d r}$, which completes a part of the proof of the following 
\begin{lemma}
    If $\frac{c ((d-2) r+d)^2}{8 (d-1) d r}<\delta\min\left(b  (1-\omega ),a  \omega\right)$, then stable equilibria decrease while the unstable ones increase as functions of $q$. 
\end{lemma}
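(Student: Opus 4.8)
The plan is to determine the sign of $\partial_q G$ at an equilibrium and then read off the monotonicity of $x^*$ from the implicit function theorem, exactly as was done above for the parameters $\mu,\delta,\omega$. Writing $x^*=x^*(q)$ for a simple root of $G(\cdot,q)$, the implicit function theorem gives
\[
\frac{\partial x^*}{\partial q}=-\frac{\partial_q G(x^*,q)}{\partial_x G(x^*,q)},
\]
so the whole statement reduces to controlling the sign of $\partial_q G$ together with the sign of $\partial_x G$ at the root.

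The sign of $\partial_x G$ is already dictated by stability: since the dynamics is the scalar equation $\dot x=G(x)$, a simple equilibrium is stable precisely when $\partial_x G(x^*)<0$ and unstable when $\partial_x G(x^*)>0$. Hence it suffices to show that, under the hypothesis of the lemma, $\partial_q G(x,q)<0$ for \emph{every} $x\in[0,1]$; the two claimed monotonicities then follow at once by inserting the two possible signs of $\partial_x G$ into the displayed quotient (stable roots give a negative quotient, unstable roots a positive one).

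To bound $\partial_q G$ I would use the decomposition recorded just above the statement: $\partial_q G$ is the sum of the quadratic part $p(x)=cx\big(\tfrac{2r(x-1)}{d}-2rx+r+1\big)$ and the exponential remainder $E(x)=-a\omega\delta\big(1-(1-x)^d\big)-b(1-\omega)\delta\big(1-x^d\big)$. First I would bound $p$ above by its global vertex value $\tfrac{c((d-2)r+d)^2}{8(d-1)dr}$, which is legitimate because this maximum over $\mathbb{R}$ dominates the maximum over $[0,1]$ regardless of where the vertex falls. Next I would bound $E$: since $1-(1-x)^d$ and $1-x^d$ are each increasing and concave on $[0,1]$, the function $E$ is convex, so its maximum over $[0,1]$ is attained at an endpoint, giving $\max_{[0,1]}E=-\delta\min\{a\omega,\,b(1-\omega)\}$. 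Summing the two bounds yields
\[
\partial_q G(x,q)\le \frac{c((d-2)r+d)^2}{8(d-1)dr}-\delta\min\{a\omega,\,b(1-\omega)\},
\]
which is strictly negative precisely under the stated hypothesis.

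The main technical point — and essentially the only place requiring care — is the convexity of $E$ and the resulting endpoint maximum: one must verify $E''\ge 0$ from $\tfrac{d^2}{dx^2}(1-x^d)=-d(d-1)x^{d-2}\le 0$ and the analogous identity for $(1-x)^d$, and then evaluate $E(0)=-b(1-\omega)\delta$ and $E(1)=-a\omega\delta$ to identify the winning endpoint. Everything else is the routine vertex computation for $p$ and the sign bookkeeping in the implicit-function quotient. I would emphasise that the bound on $\partial_q G$ is \emph{uniform} in $x$, so it applies simultaneously to every equilibrium, whatever its location in $[0,1]$ and whatever the total number of equilibria, which is exactly what is needed to conclude that all stable equilibria decrease and all unstable ones increase as functions of $q$.
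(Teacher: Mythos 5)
Your proposal is correct and follows essentially the same route as the paper: the same decomposition of $\partial_q G$ into the quadratic part (bounded by its vertex value $\tfrac{c((d-2)r+d)^2}{8(d-1)dr}$) and the incentive part (bounded by its endpoint maximum $-\delta\min\{a\omega,\,b(1-\omega)\}$), followed by the same implicit-function-theorem sign bookkeeping. The only cosmetic difference is that you justify the endpoint maximum of the incentive term via convexity, whereas the paper locates its unique interior critical point and identifies it as a minimum; both arguments are valid and interchangeable.
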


\begin{proof}
    This immediately follows from the discussion above: if the condition from the statement holds, $\partial_qG(x^{\ast}_q)<0$ for all $x^{\ast}$. Since $\partial_{x}G(x^{\ast},q)<0$ for $x^{\ast}$ stable and $\partial_{x}G(x^{\ast},q)>0$ for $x^{\ast}$ unstable, we get the appropriate increase and decrease. 
\end{proof}

\end{proof}

\begin{remark}[small institutional incentive]
By similar analysis, we can also quantitatively analyse the case when the institutional incentive is sufficiently small. More precisely, when $\delta$ is small,
\[
\delta<\min\Big\{\frac{\mu}{b(1-\omega)q},\frac{c}{2d}\frac{d-r}{(a\omega+b(1-\omega))(1-(1/2)^d)}\Big\}
\]
then $G(0)>0$, $G(1/2)<0$ $G(1)<0$. Then there is at least one stable equilibrium in $(0,1/2)$. Using the implicit theorem, we can also analyse the behaviour of the stable equilibrium with respect to the model's parameters as in the case of large institutional incentive.
\end{remark}

\subsection{Proof of Theorem \ref{st: particular cases}}
\begin{proof}[Proof of Theorem \ref{st: particular cases}]
      We go through the table case by case.\\
      \vskip .2cm
      \noindent$\boldsymbol{\delta = 0.}$ This case was addressed above in Section \ref{sec: no institutional incentive}, as well as the case $\delta=\mu=0$. 
 \vskip .2cm
      \noindent$\boldsymbol{\omega = 0.}$
Denote $G(x)$ with $\omega$ substituted for 0 by $F_{\omega_0}(x)$. Then 
\begin{equation}
    \label{eq: omega0}
    F_{\omega_0}(x) = -b\delta x^{d+1} + b\delta q x^d + Ax^2 + (B+b\delta)x + (\mu-b\delta q).
\end{equation}

We assume that $\mu - b\delta q\ne 0$.

Whatever the values of the parameters are, the first two coefficients will be negative; this justifies using Decartes' rule of signs. In order for the equation to have four roots (the maximal number), the coefficiens must have the following signs:
\begin{equation}
    \label{eq: signs omega0}
    \begin{array}{c c c c c}
        -b\delta & b\delta q &A &B+b\delta&\mu-b\delta q\\
        -&+&-&+&-
    \end{array}
\end{equation}
\begin{figure}
    \centering
  \subfigure[Zero solutions]{\includegraphics[scale=.5]{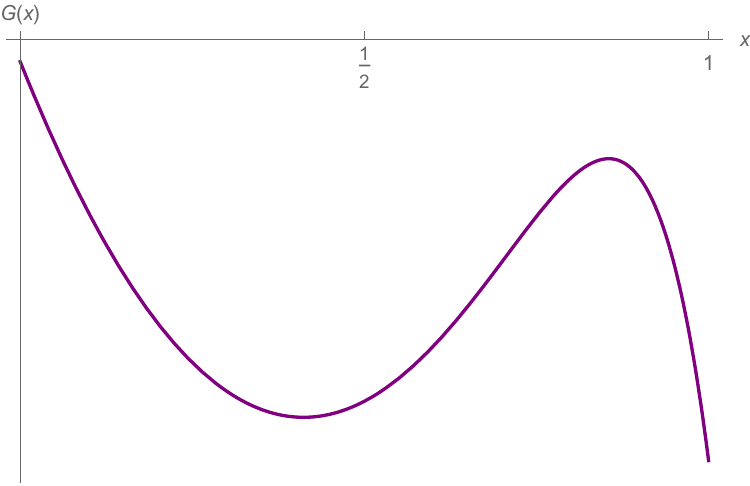}}
    \subfigure[One solution]{\includegraphics[scale=.5]{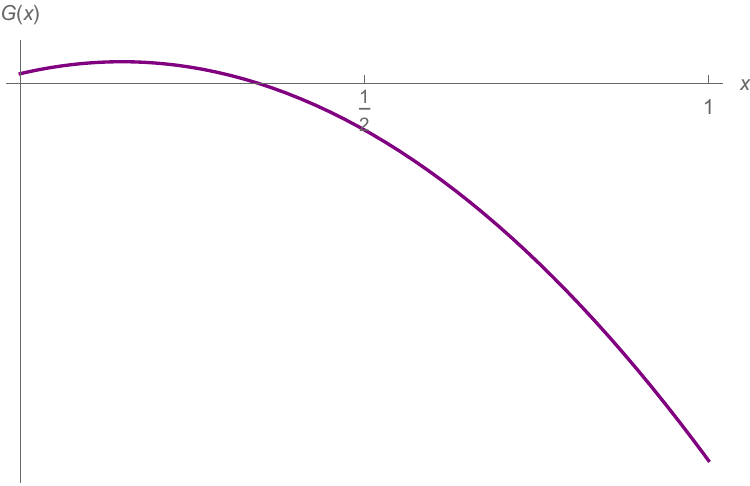}}
      \subfigure[Two solutions]{\includegraphics[scale=.5]{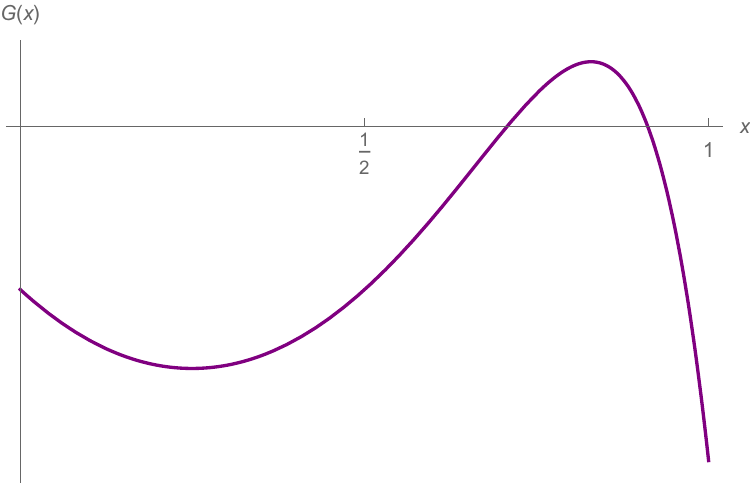}}
        \subfigure[Three solutions]{\includegraphics[scale=.5]{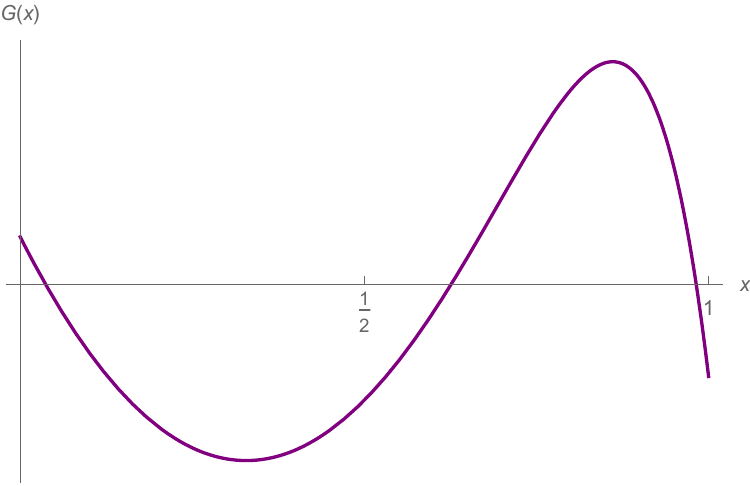}}
    \caption{Possible numbers of solutions when $\omega=0$}
    \label{fig:solutions omega0}
\end{figure}
However, numerics seem to strongly suggest that the equation $F_{\omega_0}=0$ never has four roots. We will proceed to show this analytically.    
    
Assume that the signs of the coefficients are as in (\ref{eq: signs omega0}) and the four roots are $x_1,x_2,x_3$ and $x_4$. We consider two  distinct cases:
    \begin{enumerate}
        \item $d$ is even. We apply Decartes theorem again in order to determine the number of negative roots: after changing $z\mapsto-z$, the coefficients become 
        \begin{equation*}
    \begin{array}{c c c c c}
        b\delta & b\delta q &A &-(B+b\delta)&\mu-b\delta q\\
        +&+&-&-&-
    \end{array}
\end{equation*}
meaning that there must exist one negative root, which we denote by $x_5$. Therefore, the polynomial can be represented as 
\[
F_{\omega_0}(x) = P(x)(x-x_1)(x-x_2)(x-x_3)(x-x_4)(x-x_5),
\]
where $P(x)$ does not have any roots. However, the degree of $P(x)$ is $d-5$, an odd number -- hence, $P(x)$ must have at least one root, which creates a contradiction. 
        \item $d$ is odd. In this case, the distribution of signs for the negative $z$ is 
        \begin{equation*}
         \begin{array}{c c c c c}
        -b\delta & -b\delta q &A &-(B+b\delta)&\mu-b\delta q\\
        -&-&-&-&-
    \end{array}
\end{equation*}
and the polynomial $F_{\omega_0}$ has no negative roots.

Analogously, $F_{\omega_0}(x) = (x-x_1)(x-x_2)(x-x_3)(x-x_4)P(x)$, with $\deg(P(x)) = d-4$, an odd number -- and we arrive at the same contradiction as before, which concludes our proof.
    \end{enumerate}

When $\mu=0$, the  equation turns into 
\[
b \delta  \left(x^d-1\right) (q-x)+\frac{c (2 q-1) r (x-1) x}{d}+c x (q (-2 r x+r+1)+x-1)=0
\]
The left hand side is $-b \delta  q$ when $x=0$ and $-c q (r-1)$ when $x=1$. Since both of these values are less than zero, the equation can only have an even number of roots -- therefore it is zero or 2, as per what we proved above. 
\begin{figure}
    \centering
    \subfigure[Zero Solutions]{\includegraphics[scale=.33]{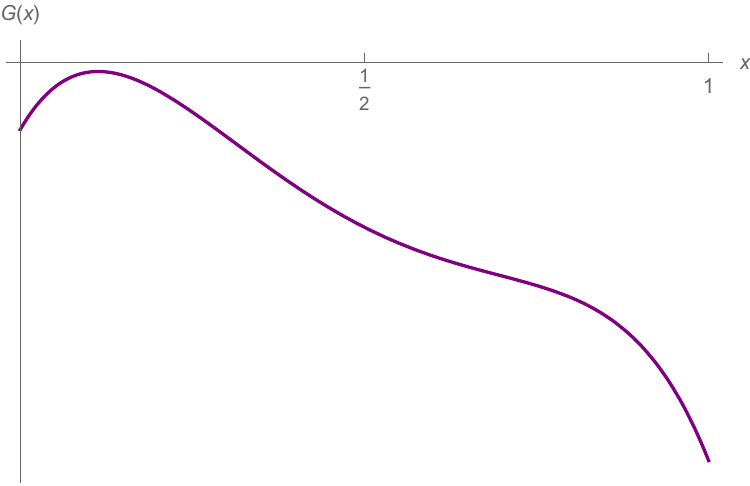}}
    \subfigure[Two Solutions]{\includegraphics[scale=.33]{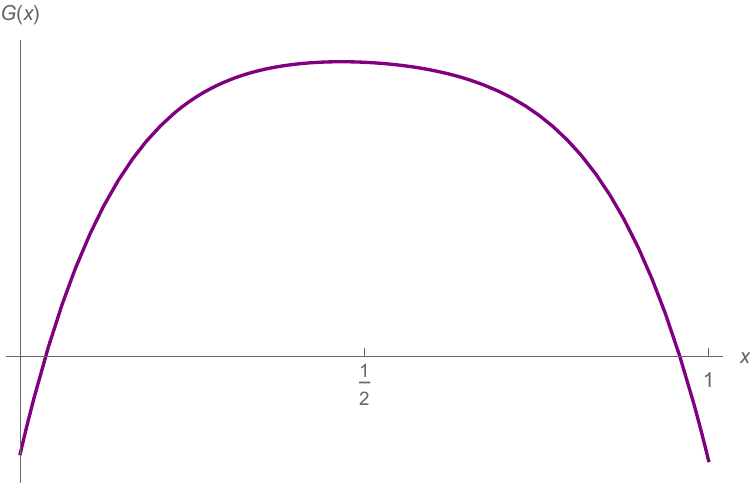}}
    \subfigure[Four Solutions]{\includegraphics[scale=.33]{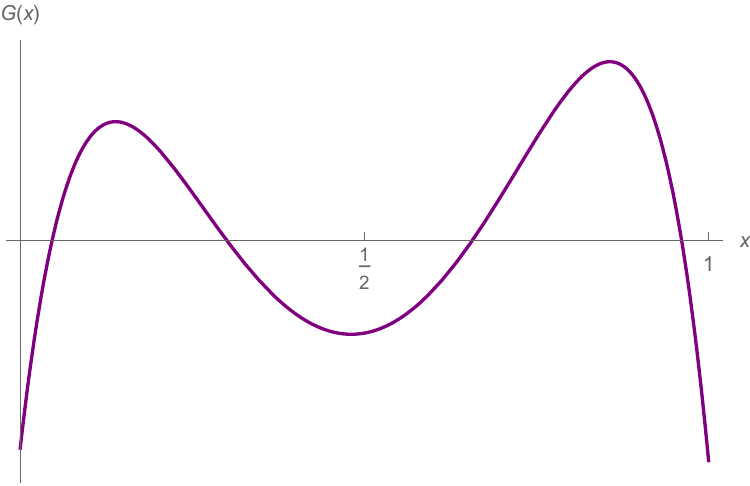}}
    \caption{Varying number of solutions for $\mu=0$}
    \label{fig:mu=0}
\end{figure}
Figure \ref{fig:solutions omega0} illustrates  examples with 0 roots (a), 1 root (b), 2 roots (c) and 3 roots (d).   
 \vskip .2cm
 \noindent$\boldsymbol{\omega = 1
.}$ The variable change $y=1-x$ effectively reduces our case to the previous one; however, some differences exist between the  two.  After substitution, the general form of the equation in $y$ will be 
\begin{small}
    \begin{equation}
        \label{eq: equation in y omega=1}
        a \delta  \left(1-y^d\right) (y-q)+\frac{c (1-y) ((d-1) q r -(1-y) (2 (d-1) q r-d+r)+(1-q) (r-d))}{d}-\mu  (1-2y)=0.
    \end{equation}
\end{small}
For brevity, we denote the coefficients at $y$ and $y^2$  by $B$ and $A$ respectively-- note that they don't have a fixed sign. Analogously to the previous case, we write out the sing changes:
\begin{equation}
\label{eq: change signs y}
         \begin{array}{c c c c c}
        -a\delta & aq\delta  &A &B &q\left(c(1-r)-a\delta\right) -\mu\\
        -&+&?&?&-
    \end{array}
\end{equation}

When $y=0$,  the left-hand side of (\ref{eq: equation in y omega=1}) is $q\left(c(1-r)-a\delta\right) -\mu<0$ and when $y=1$, it is equal to $\mu>0$. Hence, the number of roots is either 1 or 3, and at least one root must exist.  We set out to show that, in actuality, only one solution exists. 

Note that since the sign of the last coefficient is fixed (unlike in the case for $\omega=0$), for three roots to exist we need the coefficients to have exactly the following sings:
\begin{equation}
\label{eq: change signs y2}
         \begin{array}{c c c c c}
        -a\delta & aq\delta  &A &B &q\left(c(1-r)-a\delta\right) -\mu\\
        -&+&-&+&-
    \end{array}
\end{equation}

But we have shown above that the equation of this type can't have four positive roots (even outside of $[0,1]$ interval); therefore, with this arrangement of signs two positive roots exist, with  one of them lying between $0$ and $1$ (note that at least one root must exist). 

Consider all other possible distributions of signs:
\[
 \begin{array}{c c c c c}
        -a\delta & aq\delta  &A &B &q\left(c(1-r)-a\delta\right) -\mu\\
        -&+&+&+&-
    \end{array},
\]
\[
 \begin{array}{c c c c c}
        -a\delta & aq\delta  &A &B &q\left(c(1-r)-a\delta\right) -\mu\\
        -&+&-&-&-
    \end{array},
\]
or 
\[
 \begin{array}{c c c c c}
        -a\delta & aq\delta  &A &B &q\left(c(1-r)-a\delta\right) -\mu\\
        -&+&+&-&-
    \end{array},
\]

all of which have two sign changes, meaning that two or zero positive roots exist. By the same reasoning as above we deduce that these two roots must exist, with one of them belonging to the $[0,1]$-interval. 
\vskip .2cm
\noindent $\boldsymbol{\mu=0.}$
    This assumption turns $G(x)$ into 
    \begin{small}
        \begin{equation}
            \begin{split}
                G(x)_{\mu_0} &= \frac{c}{d}x\left(rq(d-1) + (r-d)(1-q) - x(2rq(d-1) + (r-d))\right)\\&+a\omega\delta(1-q-x)\left(1 - (1-x)^d\right) - b(q-x)\delta(1-\omega)(1-x^d).
            \end{split}
        \end{equation}
    \end{small}
    \begin{figure}
        \centering
      \subfigure[One Solution]{\includegraphics[scale=.5]{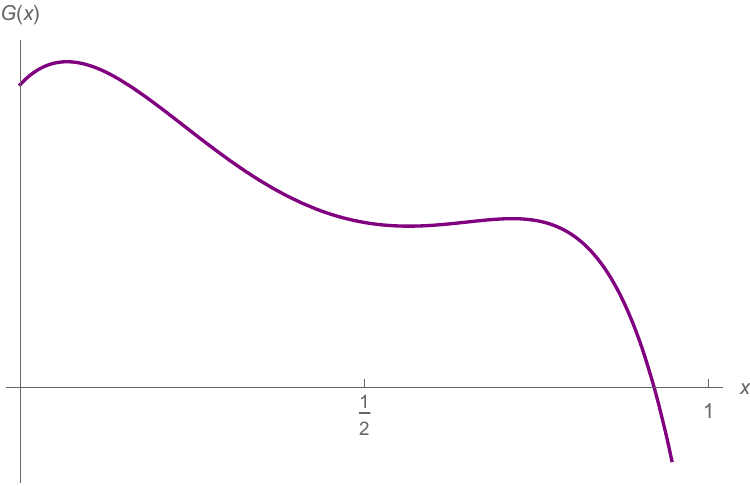}}
      \subfigure[Two Solutions]{\includegraphics[scale=.5]{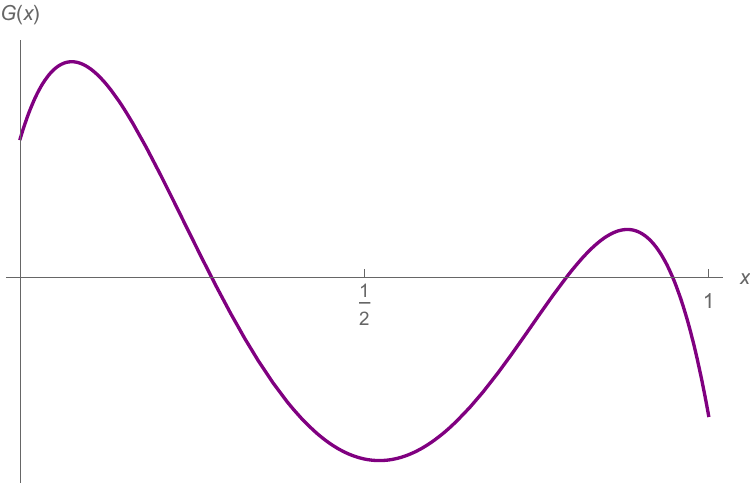}}
        \caption{Different numbers of solutions when $q=0$.}
        \label{fig:q=0}
    \end{figure}
    At the endpoints of the interval 
    \begin{equation*}
        \begin{split}
            G_{\mu_0}(0) &= -b \delta  q (1-\omega )<0,\\
            G_{\mu_0}(1) &= -q (a \delta  \omega +c (r-1))<0;
        \end{split}
    \end{equation*}
      hence, the equation $G_{\mu_0}=0$ has an even number of solutions. Taking Theorem \ref{st: general case} into consideration, this means that $G_{\mu_0}=0$ has $0,2$ or $4$ roots -- see Figure \ref{fig:mu=0} for illustrations.
\noindent

\noindent
$\mathbf{q=0}$.

Substituting $q=0$ turns the function $G(x) $into 
\begin{equation}
    \label{eq:gq0}\
    G_{q_0} := a \delta  (x-1) \omega  \left((1-x)^d-1\right)+b \delta  x (\omega -1) \left(x^d-1\right)+\frac{c (x-1) x (d-r)}{d}+\mu -2 \mu  x.
\end{equation}

Computation shows that $G_{q_0}(0) = \mu$ and $G_{q_0}(1) = -\mu$ -- therefore, the equation 
\[
G_{q_0} = 0
\] 
can only have an odd number of solutions. We know that the number does not exceed 4 in the general case, therefore, we are left with 1 or 3 solutions. 

Finding the appropriate values of the coefficients, we may observe that both cases take place -- see Figure \ref{fig:q=0}
\vskip .2cm
\noindent\textbf{Intersections} All the rest of the cases in the table can be checked by substituting and configurating parameters
\end{proof}

\subsection{Proof of Lemma \ref{st: how g2 behaves with omega and q}}
\begin{proof}[Proof of Lemma \ref{st: how g2 behaves with omega and q}]
    \begin{enumerate}
        \item We are going to demonstrate this explicitly by computing $\frac{\partial G''_1(x)}{\partial\omega}$. 
        \[
        \frac{\partial G''_1(x)}{\partial\omega} = x^{d-2} \left(x - q\frac{d-1}{d+1}\right)-(1-x)^{d-2} \left(1-x - q\frac{d-1}{d+1}\right)
        \]
        One can repeat the proof of Theorem \ref{st: general case} verbatim to ascertain that the equation $\frac{\partial G''_2(x)}{\partial\omega} = 0$ has one root; computation shows that it is located at $x = 1/2$. Since $frac{\partial G''_2}{\partial\omega}(0) = -(d (1-q)+q+1)<0$,  $\frac{\partial G''_2(x)}{\partial\omega}<0$ when $x<1/2$. 
        \item We proceed analogously and compute
        \[
     \frac{\partial G''_1(x)}{\partial q}  =   (d-1) \left((1-\omega ) x^{d-2}+\omega  (1-x)^{d-2}\right)>0 \ \text{for all} \ x\in[0,1].
        \]
    \end{enumerate}
   
\end{proof}
\subsection{Proof of Lemma \ref{lem: q2}}
\begin{proof}[Proof of Lemma \ref{lem: q2}]
    Consider the derivative $q_2'(x)$:
    \begin{small}
        \begin{equation*}
            q_2'(x) =\frac{(d+1) (2 x-1) \left(d^2-4d+2 (x-1) x+4 -\sqrt{d(1-2 x)^2(d-4)+4 (x^2-x)  (x^2- x+4)+4}\right)}{(d-2) (d-1) \sqrt{d(1-2 x)^2(d-4)+4 (x^2-x)  (x^2- x+4)+4}}
        \end{equation*}
    \end{small}
    Clearly, $q_2'(1/2)=0$; we claim that the derivative has no other zeros. To show it, we consider the expression in the parentheses in the numerator:
    \begin{equation*}
    \begin{split}
    d^2-4d+2 (x-1) x+4 -\sqrt{d(1-2 x)^2(d-4)+4 (x^2-x)  (x^2- x+4)+4} &= 0\iff\\
    \left(d^2-4d+2 (x-1) x+4\right)^2- \left(d(1-2 x)^2(d-4)+4 (x^2-x)  (x^2- x+4)+4\right) &=0 \iff \\
    (d-3) (d-2)^2 (d-1) &=0
    \end{split}
    \end{equation*}

The last line is the simplification of the penultimate line. The last statement is clearly false, so $x = 1/2$ remains the only zero. 

It is a minimum of the function, since $q''_1(1/2) = 4 \left(d-\frac{3}{d-2}\right)>0$.

To complete our proof, we compute $q_2(1/2) =\frac{d+1}{2 (d-1)}>\frac12 $.
\end{proof}
\subsection{Proof of Lemma \ref{lem: geo. property}}
\begin{proof}[Proof of Lemma \ref{lem: geo. property}]
The first statement is an immediate corollary of Theorem \ref{st: general case}: for all values of $\mu$ the equation $G(x)=0$ has no more than 4 solutions. The second one follows from Theorem \ref{st: particular cases} -- the equation $G_{\mu_0}=0$ has 0,1,2 or 3 roots.
\end{proof}
\subsection{Proof of Theorem \ref{thm: bifurcation}}

We prove the theorem in a number of steps. 
\begin{lemma}
    The left branch of the graph of $G_{\mu}$ starts above 0, and the right branch "ends" below 0. 
\end{lemma}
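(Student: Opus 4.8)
The plan is to reduce the statement to evaluating the explicit rational function $G_{\mu}$ at the two endpoints $x=0$ and $x=1$, since the \emph{left branch} and \emph{right branch} are precisely the restrictions of $G_\mu$ to $[0,1/2)$ and $(1/2,1]$, the two pieces separated by the vertical asymptote at $x=1/2$. Writing $G(x)=P(x)-\mu(2x-1)$, where $P$ collects all the $\mu$-independent terms of \eqref{eq:g(x)=0}, the defining relation \eqref{eq: mu bifurcations} is simply $G_\mu(x)=P(x)/(2x-1)$, equivalently $P(x)=G(x)+\mu(2x-1)$. Thus the endpoint values of $G_\mu$ follow directly from the already-recorded boundary values $G(0)=\mu-b\delta q(1-\omega)$ and $G(1)=-q(a\delta\omega+c(r-1))-\mu$ from the stability analysis, with no new computation of the degree-$(d+1)$ polynomial required.

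First I would handle the left endpoint. At $x=0$ the denominator $2x-1$ equals $-1$, and $P(0)=G(0)-\mu=-b\delta q(1-\omega)$: the factor $1-(1-x)^d$ kills the reward term and the factor $x$ kills the quadratic part, leaving only the punishment contribution. Hence $G_\mu(0)=P(0)/(-1)=b\delta q(1-\omega)$, which is strictly positive because $b,\delta,q>0$ and $\omega\in(0,1)$ by \eqref{eq: table of parameters}; so the left branch indeed starts above $0$.

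Next I would treat the right endpoint. At $x=1$ the denominator equals $+1$, and $P(1)=G(1)+\mu=-q(a\delta\omega+c(r-1))$: here the factor $1-x^d$ annihilates the punishment term, while the surviving reward and quadratic pieces combine to $-q(a\delta\omega+c(r-1))$. Therefore $G_\mu(1)=-q(a\delta\omega+c(r-1))$, which is strictly negative since $a,\delta,\omega,q,c>0$ and $r>1$; so the right branch ends below $0$.

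The computation is elementary, so there is no genuine analytic obstacle; the only point requiring care is the correct identification of which terms survive at each endpoint. This is clean precisely because the exponential factors $1-(1-x)^d$ and $1-x^d$ vanish at $x=0$ and $x=1$ respectively, collapsing $P$ to a single linear-in-$\delta$ expression at each end, after which the signs are forced entirely by the parameter ranges in \eqref{eq: table of parameters}.
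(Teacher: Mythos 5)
Your proposal is correct and takes essentially the same route as the paper: the paper's proof simply records $G_{\mu}(0)=b\delta q(1-\omega)>0$ and $G_{\mu}(1)=-q\left(a\delta\omega+c(r-1)\right)<0$, which are exactly the values you obtain (your derivation via the previously computed boundary values $G(0)$ and $G(1)$ is just a tidy shortcut for the same endpoint evaluation). Both signs follow from the parameter ranges in \eqref{eq: table of parameters}, as you note.
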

\begin{proof}
   It is easily checked that 
    \[
    G_{\mu}(0) = b \delta  q (1-\omega)>0, \ G_{\mu}(1) = -q \left(a \delta  \omega +c (r-1)\right)<0
    \]
\end{proof}
\begin{lemma}
    The function $G_{\mu}(x)$ has at most four maxima and minima (not counting the singularity at $x =1/2$). 
\end{lemma}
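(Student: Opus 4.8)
The plan is to reduce the statement to a zero-counting problem for the derivative $G_{\mu}'$, and then to run the same Rolle's-theorem cascade that underlies the proof of Theorem~\ref{st: general case}. Write $\Phi(x) := (2x-1)\,G_{\mu}(x)$ for the $\mu$-independent part of $G$, so that $G(x) = \Phi(x) - \mu(2x-1)$ and
\[
G_{\mu}(x) = \frac{\Phi(x)}{2x-1}.
\]
Here $\Phi$ is a polynomial of degree $d+1$, smooth on all of $[0,1]$, and since $G$ and $\Phi$ differ only by the affine term $\mu(2x-1)$ we have $\Phi'' = G''$ and $\Phi''' = G'''$. In particular, by the monotone-intersection argument already carried out in the proof of Theorem~\ref{st: general case}, $\Phi'''$ has at most one zero in $[0,1]$.

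Next I would locate the extrema of $G_{\mu}$. Differentiating the quotient gives
\[
G_{\mu}'(x) = \frac{(2x-1)\Phi'(x) - 2\Phi(x)}{(2x-1)^2} =: \frac{P(x)}{(2x-1)^2},
\]
so away from the asymptote $x=1/2$ the critical points of $G_{\mu}$ are exactly the zeros of the polynomial $P(x) := (2x-1)\Phi'(x) - 2\Phi(x)$. The crucial computation is the telescoping identity
\[
P'(x) = \bigl(2\Phi'(x) + (2x-1)\Phi''(x)\bigr) - 2\Phi'(x) = (2x-1)\,\Phi''(x),
\]
which exhibits $P'$ in factored form, so that its zeros are read off directly.

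With this identity the bound follows from a short Rolle cascade. Since $\Phi'''$ has at most one zero in $[0,1]$, Rolle's theorem forces $\Phi''$ to have at most two zeros there; consequently $P' = (2x-1)\Phi''$ has at most three zeros in $[0,1]$, namely the (at most two) zeros of $\Phi''$ together with the point $x=1/2$. Applying Rolle's theorem once more, $P$ has at most four zeros in $[0,1]$. Since $(2x-1)^2>0$ off the singularity, every local maximum or minimum of $G_{\mu}$ on $[0,1]\setminus\{1/2\}$ is a zero of $P$, and therefore $G_{\mu}$ has at most four extrema, which is precisely the claim.

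The main obstacle is essentially bookkeeping rather than a new estimate: one must verify the telescoping identity $P'=(2x-1)\Phi''$ cleanly, since it is exactly what makes the zero-count collapse; confirm that $\Phi''=G''$ and $\Phi'''=G'''$ so that the single-zero conclusion of Theorem~\ref{st: general case} can be imported; and keep careful track of the point $x=1/2$, which must be \emph{excluded} when counting genuine extrema of $G_{\mu}$ but \emph{included} as a zero of $P'$ in the intermediate Rolle step. No estimate beyond Theorem~\ref{st: general case} is required.
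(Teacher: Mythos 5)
Your proof is correct and follows essentially the same route as the paper: the paper also writes $G_{\mu}'$ as a numerator over $(1-2x)^2$, factors the numerator's derivative as $(2x-1)$ times a function whose derivative is the expression \eqref{eq: G'''(x)} for $G'''$, and runs the same Rolle cascade from the at-most-one-zero property established in the proof of Theorem \ref{st: general case}. Your telescoping identity $P'=(2x-1)\Phi''$ is simply a cleaner, computation-free way of obtaining the factorization that the paper verifies by explicit differentiation.
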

\begin{proof}
    The proof is purely computational. 
Calculating the derivative gives 
\begin{small}
    \begin{equation}
    \begin{split}
    \label{eq: first derivative}
        G'_{\mu} &= \frac{d \delta  \left(a \omega  \left((1-x)^{d-1} (-(d (2 x-1) (q+x-1)-2 q x+2 q+x-1))+2 q-1\right)\right)}{ (1-2 x)^2}+\\ & \frac{d\delta\left(b (\omega -1) \left(x^{d-1} \left(-x (2 (d-1) q+d+1)+d q+2 d x^2\right)-2 q+1\right)\right)}{d (1-2 x)^2}+\\  &\frac{c \left(-d q \left(r (1-2 x)^2+1\right)+2 d (x-1) x+d+(2 q-1) r (2 (x-1) x+1)\right)}{d (1-2 x)^2}=:\frac{F(x)}{d(1-2x)^2}
        \end{split}
    \end{equation}
\end{small}
For simplicity, we combine below all the derivatives of the function $F(x)$:
\begin{small}
    \begin{equation}
        \begin{split}
            F'(x) =& \ (2 x-1) \Bigl[\delta  d^2 \bigl(a \omega  (1-x)^{d-2} (d (q+x-1)-q+x-1)+\\ & b (\omega -1) x^{d-2} (d x-d q+q+x)\bigr)+2 c (d(1-2  q r)+(2 q-1) r)\Bigr]\\
            F''(x) =& \  \delta  d^2 \Bigl[(b (\omega -1) x^{d-3} ((d+1) x (2 d x-d+1)-(d-1) q (2 (d-1) x-d+2))-\\& a \omega  (1-x)^{d-3} \left(d^2 (2 x-1) (q+x-1)+d \left(q-4 (q+1) x+2 x^2+2\right)+2 q x-x+1\right)\Bigr]+\\&4 c (-2 d q r+d+(2 q-1) r)\\
            F'''(x) =&\ (d-1) d^2 \delta  \Bigl[a \omega  (1-x)^{d-4} \bigl(d^2 (2 x-1) (q+x-1)+\\&d \left(q(1-6x)+2 x^2-5 x+3\right)+2 (2 q x+q-x+1)\bigr)+\\& b (\omega -1) x^{d-4} ((d+1) x (2 d x-d+2)-(d-2) q (2 (d-1) x-d+3))\Bigr]
        \end{split}
    \end{equation}
\end{small}

Consider the second multiplier in  $F'(x)$, namely,
\begin{small}
   \begin{equation}
       \begin{split}
           f(x):=& \delta  d^2 \left(a \omega  (1-x)^{d-2} (d (q+x-1)-q+x-1)+b (\omega -1) x^{d-2} (d(x-q)+q+x)\right)+\\ &2 c (-2 d q r+d+(2 q-1) r)
       \end{split}
   \end{equation}
\end{small}
Then, 
\begin{small}
    \begin{equation}
        \begin{split}
            f'(x) = (d+1)(d-1) d^2 \delta  \Bigl[-b (1-\omega) x^{d-3} \left(x - q\frac{d-2}{d+1}\right)+a \omega  (1-x)^{d-3} \left(1-x - q\frac{d-2}{d+1}\right)\Bigr],
        \end{split}
    \end{equation}
\end{small}
which coincides with (\ref{eq: G'''(x)}). Recall that the equation $G'''(x)=0$ has a unique solution; hence, $f(x)$ has at most two zeros, $F'(x)$ has three and $F(x)$ at most 4.   
\end{proof}
\begin{lemma}
    The function $G(x)$ can't have all of its four zeros to any one side of $x=1/2$. 
\end{lemma}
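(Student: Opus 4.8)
The plan is to reason on the graph of the function $G_{\mu}$ introduced in \eqref{eq: mu bifurcations}, because $x\neq\frac12$ is a zero of $G$ precisely when $G_{\mu}(x)=\mu$, and the vertical asymptote at $x=\frac12$ cuts this graph into a \emph{left branch} on $[0,\frac12)$ and a \emph{right branch} on $(\frac12,1]$. The configuration to be excluded is exactly that the horizontal line $y=\mu$ meets one single branch in all four points. I would first collect the data already available: by the two preceding lemmas $G_{\mu}(0)=b\delta q(1-\omega)>0$, $G_{\mu}(1)=-q\bigl(a\delta\omega+c(r-1)\bigr)<0$, and $G_{\mu}$ has at most four critical points away from $x=\frac12$. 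Moreover, near the pole $G_{\mu}(x)\sim \dfrac{G(\tfrac12)}{2x-1}$, so the sign of $G(\tfrac12)$ decides which branch runs to $+\infty$ and which to $-\infty$.

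Next I would turn both forbidden configurations into sign constraints. If the four zeros $x_1<\dots<x_4$ all lie on one side of $\frac12$, then $G$ has constant sign on the complementary subinterval; since $G(1)<0$ and nothing lies to the right of $x_4$, propagating the four sign alternations back from $x=1$ forces $G(0)<0$ \emph{and} $G(\tfrac12)<0$ in either case. Hence we may assume $G(\tfrac12)<0$, so the left branch runs to $+\infty$ and the right branch to $-\infty$. A short count then shows that four crossings of the positive level $y=\mu$ on a single branch require that branch to carry three interior extrema (a min--max--min or a max--min--max pattern), so the other branch carries at most one; the budget of four extrema is therefore saturated and I must use their precise location. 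Here the structure from the previous lemma is decisive: $F'(x)=(2x-1)f(x)$ with $f$ \emph{unimodal}, its single critical point being the unique zero $x^{\ast}$ of $G'''$. Consequently $F$ can have three zeros on one side of $\frac12$ only if $f$ has \emph{both} its zeros on that same side, which forces $x^{\ast}$ to lie on that side as well; and a direct evaluation gives $G'''(\tfrac12)=(d+1)d(d-1)\delta\,(\tfrac12)^{d-3}\bigl(\tfrac12-q\tfrac{d-2}{d+1}\bigr)\bigl(a\omega-b(1-\omega)\bigr)$, so the side of $x^{\ast}$ is pinned by the sign of $a\omega-b(1-\omega)$. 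In particular, whenever the two zeros of $f$ straddle $\frac12$, each branch has at most two extrema and four crossings on one branch is already impossible.

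The remaining, and genuinely delicate, case is when both zeros of $f$ lie strictly on the side of $x^{\ast}$ (equivalently $f(\tfrac12)<0$), so that branch \emph{could} a priori host three extrema. This is the main obstacle: the extremum count is now tight and must be beaten by comparing the actual extreme \emph{values} of $G_{\mu}$ on that branch with the level $y=\mu$. I would finish by evaluating the sign of $F$ at the finite endpoint of the branch (namely $F(0)=d\,G_{\mu}'(0)$ when $x^{\ast}<\frac12$, or $F(1)=d\,G_{\mu}'(1)$ when $x^{\ast}>\frac12$, whose sign I would read off from $f(0)$, $f(1)$ and the $c$-term of $F$) and combining it with the monotonicity dictated by the single hump of $f$ and with the forced slope of $G_{\mu}$ near the $\pm\infty$-side of the pole. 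The aim of this step is to show that the required min--max--min (resp.\ max--min--max) alternation cannot simultaneously straddle $y=\mu$ four times, so that the four putative crossings collapse to at most two, giving the contradiction. Since $\operatorname{sign}(a\omega-b(1-\omega))$ already selects which branch is the candidate in each case, the single endpoint computation covers both the all-in-$(0,\tfrac12)$ and the all-in-$(\tfrac12,1)$ configurations, completing the proof.
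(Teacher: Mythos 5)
Your reduction is carefully done up to a point: recasting the question in terms of the two branches of $G_{\mu}$, the count ``four crossings of $y=\mu$ on one branch forces three interior extrema there, hence three zeros of $F$ and two zeros of $f$ on that side of $\tfrac12$'', and the localisation of $x^{\ast}$ via the sign of $G'''\bigl(\tfrac12\bigr)$ are all legitimate consequences of the structure $F'=(2x-1)f$, $f'\propto G'''$ established earlier in the paper. But the proof is not finished. The case you yourself call ``the main obstacle'' --- both zeros of $f$ lying on the same side of $\tfrac12$ as $x^{\ast}$, so that one branch may genuinely carry three extrema --- is precisely the configuration the lemma must exclude, and for it you only announce a plan (``I would finish by evaluating the sign of $F$ at the finite endpoint\dots the aim of this step is to show\dots''). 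Nothing in the proposal actually demonstrates that a min--max--min profile on that branch cannot cross the level $y=\mu$ four times: a sign computation at one endpoint together with unimodality of $f$ controls the \emph{positions} of the extrema of $G_{\mu}$, not their \emph{values} relative to $\mu$, and it is the values that decide how many crossings occur. As written, the lemma is established only in the easy sub-case where the two zeros of $f$ straddle $\tfrac12$, so there is a genuine gap at the decisive step.

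For comparison, the paper's own proof takes a completely different and much shorter route: it invokes the symmetry $x\mapsto 1-x$, $a\leftrightarrow b$, $\omega\mapsto 1-\omega$ of $G$ to reduce to the case of four roots below $\tfrac12$, and then rules that case out by a one-line observation at $x=\tfrac12$ forcing a root above $\tfrac12$. Your branch-and-extrema skeleton is a reasonable alternative strategy and its completed parts are sound, but to count as a proof it must actually close the three-extrema case rather than defer it.
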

\begin{proof}
    First, we note that the exchange $x\mapsto1-x$, $a\mapsto b$, $b\mapsto a$, $\omega\mapsto1-\omega$ preserves the function $G(x)$. Therefore, we need to prove the above statement for $x<1/2$ only. 

To do so, note that since $G'(1/2)=0$, at least one root of $G(x)$ is greater than $1/2$-- this excludes the possibility of all four roots lying to one side.
    
\end{proof}
With these considerations in mind, it is very easy to draw all the possible schematic forms of the graph of $G_{\mu}$, which can be  in Figures \ref{fig: bifurcations 1}, \ref{fig: bifurcations 2} and \ref{fig: bifurcations 3}. Bearing in mind generalisations from the  from the statement of the theorem, all possible shapes can be classified by  the number of critical points on either side of $x=1/2$, asymptotic behaviour and the number of intersections with the $x-$axis. 
\subsection{Conditions for the replicator-mutator dynamics have one or two equilibria}
\label{sec: conditions two solutions}
In this section, we seek conditions for the parameters to satisfy Proposition \ref{st: when one or two solution}. We begin with $G_2(x)$.

The graph of this quadratic function is an upturned parabola if and only if the coefficient at $x^2$ is greater than 0, i.e. if 
\begin{equation}
    \label{eq: coefficient at g2}
    \frac{c(2 (d-1) q r-d+r)}{d}
    >0
\end{equation}
For fixed $d$ and $q$, the area in $c$ and $r$ for which (\ref{eq: coefficient at g2}) holds, is in Figure \ref{fig:conditions in c and r}. 

 Next we seek the conditions such that the function $G_1$ is concave, that is the second derivative  $G_1''(x)$ thus has to be strictly negative in the $[0,1]$ interval.

Explicitly, we can write
\begin{equation*}
  G_1''(x) = -d \delta  \Bigl((1-\omega ) x^{d-2} ((d+1)x-(d-1)q)+\omega  (1-x)^{d-2} ((d+1)(1-x) - (d-1)q)\Bigr). 
\end{equation*}

Since $G''_1(0) = -a d \delta  \omega  (d (1-q)+q+1)<0$  and $G''_1(1) =- b d \delta  (1-\omega ) (d (1-q)+q+1)<0$, the condition for it assuming only negative values is  the equation 
\begin{equation}
    \label{eq: g''2 has no solutions}
    -\frac{(1-\omega ) x^{d-2} ((d+1)x - (d-1)q)}{\omega }=(1-x)^{d-2} ((d+1) (1-x)-(d-1) q)
\end{equation}
having no solutions. In the light of $G_1''(0)<0$ this will be sufficient for our claim. 

\begin{remark}
    Without loss of generality, we may assume that $x<1/2$. If not, we make an exchange $x\mapsto1-x, \omega\mapsto1-\omega$ and reduce the problem to the previous one. 
\end{remark}
\begin{proposition}
\label{st: how g2 behaves with omega and q}
    When $x<1/2$, the following hold:
    \begin{enumerate}
        \item $G''_1(x)$ is a decreasing function of $\omega$;
        \item  $G''_1(x)$ is an increasing function of $q$.
    \end{enumerate}
\end{proposition}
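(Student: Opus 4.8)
The plan is to work directly from the closed form of the second derivative, which one obtains by differentiating $G_1$ twice:
\[
G_1''(x) = -d\delta\Bigl((1-\omega)x^{d-2}\bigl((d+1)x-(d-1)q\bigr)+\omega(1-x)^{d-2}\bigl((d+1)(1-x)-(d-1)q\bigr)\Bigr).
\]
The crucial structural feature is that $G_1''$ is affine in both $\omega$ and $q$, so each partial derivative is elementary. I would record this formula first and then dispatch the two monotonicity claims separately.

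For part (1), differentiating in $\omega$ annihilates nothing but recombines the two pieces, leaving, up to the positive factor $d\delta(d+1)$,
\[
\frac{\partial G_1''}{\partial\omega}\ \propto\ \widetilde H(x)-\widetilde H(1-x),\qquad \widetilde H(x):=x^{d-2}\Bigl(x-q\tfrac{d-1}{d+1}\Bigr).
\]
The key point is that this is exactly the reflection structure $\widetilde H(x)=K\,\widetilde H(1-x)$ with $K=1$ analysed in the proof of Theorem \ref{st: general case}. I would reuse that argument to conclude that $\widetilde H(x)-\widetilde H(1-x)$ has a unique zero in $(0,1)$, and then note that by the $x\mapsto 1-x$ antisymmetry of the expression this zero must sit at $x=1/2$. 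It then remains only to fix the sign on $(0,1/2)$, which I would do at the endpoint $x=0$: there $\widetilde H(0)-\widetilde H(1)=-\bigl(1-q\tfrac{d-1}{d+1}\bigr)<0$, since $q<\tfrac12$ and $\tfrac{d-1}{d+1}<1$. As the sign cannot change before $x=1/2$, we obtain $\partial_\omega G_1''<0$ throughout $(0,1/2)$, i.e.\ $G_1''$ is decreasing in $\omega$.

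Part (2) is essentially immediate: differentiating the displayed formula in $q$ strips off the $x$- and $(1-x)$-linear factors and yields, up to the positive constant $d\delta(d-1)$,
\[
\frac{\partial G_1''}{\partial q}\ \propto\ (1-\omega)x^{d-2}+\omega(1-x)^{d-2},
\]
a sum of manifestly non-negative terms that is strictly positive for $x\in(0,1)$ and $\omega\in(0,1)$. Hence $G_1''$ is increasing in $q$ on the whole interval, with no restriction to $x<1/2$ required.

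The only genuinely non-trivial ingredient is the uniqueness-of-root claim in part (1); everything else is sign bookkeeping. I expect the main obstacle to be verifying that the Theorem \ref{st: general case} argument transfers cleanly, since the exponent has shifted from $d-3$ to $d-2$ and the constant from $q\tfrac{d-2}{d+1}$ to $q\tfrac{d-1}{d+1}$. Concretely, $\widetilde H$ now vanishes at $x=q\tfrac{d-1}{d+1}$ and attains its minimum at $x=q\tfrac{d-2}{d+1}$, and one must confirm that $0\le q\le\tfrac12$ keeps these breakpoints (and their reflections) in the order $0<q\tfrac{d-2}{d+1}<q\tfrac{d-1}{d+1}<\tfrac12<1-q\tfrac{d-1}{d+1}<1$. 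Granting this ordering, $\widetilde H(x)$ and $\widetilde H(1-x)$ have opposite signs off the central band and are respectively strictly increasing and strictly decreasing on it, forcing a single crossing at $x=1/2$ and closing the proof.
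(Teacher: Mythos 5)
Your proof is correct and follows essentially the same route as the paper: compute $\partial_\omega G_1''$ and $\partial_q G_1''$ from the closed form, reuse the reflection argument of Theorem \ref{st: general case} to locate the unique zero of the $\omega$-derivative at $x=1/2$, and fix the sign at $x=0$, while the $q$-derivative is a manifestly positive sum. The only difference is that you explicitly verify the shifted breakpoint ordering $0<q\tfrac{d-2}{d+1}<q\tfrac{d-1}{d+1}<\tfrac12<1-q\tfrac{d-1}{d+1}<1$ for the exponent $d-2$, a detail the paper dismisses with ``repeat verbatim''; this is a welcome addition rather than a deviation.
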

Note that Equation (\ref{eq: g''2 has no solutions})  can be rewritten as 
\[
KF(x) = F(1-x) \ \text{for some } K<0,
\]
and the graphs of the left hand side and the right hand side need not intersect (see Figure \ref{fig:x and 1-x zoomed in} (a)).

Obviously, they are structured identically (barring the vertical reflection): monotonously increasing from $x=0$ to the maximum at $x = \frac{d-2}{d+1}$, then monotonously decreasing. When the value at the maximum is  made large enough by $K$, the equation has two solutions (see Figure \ref{fig:x and 1-x zoomed in}(b)).

We are concerned with how the $K$-driven bifurcations happen for (\ref{eq: g''2 has no solutions}). The conditions for the two graphs to be  tangent are (at a given point, the functions, as well as their derivatives, must be equal):
\begin{equation}
    \label{eq: when two grahs are tangent}
    \begin{cases}
        (1-\omega ) x^{d-2} (-d q+d x+q+x)=\omega  (1-x)^{d-2} (d (q+x-1)-q+x-1);\\
         (\omega -1) x^{d-3} (-d q+d x+2 q+x)= \omega  (1-x)^{d-3} (d (q+x-1)-2 q+x-1),
    \end{cases}
\end{equation}
The natural next step is to divide the first equation by the second, to obtain
\[
\frac{x (-d q+d x+q+x)}{(d-1) (-d q+d x+2 q+x)}=\frac{(x-1) (d (q+x-1)-q+x-1)}{(d-1) (d (q+x-1)-2 q+x-1)},
\]
an equation that can be solved for $q$, yielding 
\begin{small}
\begin{equation}
    \begin{split}
    \label{eq: conditions on q}
       q_1&= \frac{(d+1) \left(d-2-2x (x-1) +\sqrt{d(1-2 x)^2(d-4)+4 (x^2-x)  (x^2- x+4)+4}\right)}{2 (d-2) (d-1)},\\
       q_2&= \frac{(d+1) \left(d-2-2x (x-1) -\sqrt{d(1-2 x)^2(d-4)+4 (x^2-x)  (x^2- x+4)+4}\right)}{2 (d-2) (d-1)}.
    \end{split}
\end{equation}
\end{small}

\begin{lemma}
\label{lem: q2}
    The solution $q_2$ does not belong to $\left[0,\frac12\right]$-interval. 
\end{lemma}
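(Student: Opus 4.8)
The plan is to treat $q_2$ as a single-variable function $q_2=q_2(x)$ on $[0,1]$, with the integer $d$ held fixed, using the closed form in \eqref{eq: conditions on q}, and to show that on the range of the tangency variable $x$ its values stay clear of $\left[0,\tfrac12\right]$. Since $q_2$ is smooth in the interior, the strategy reduces to locating its critical points and comparing the resulting extreme values with $\tfrac12$.

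First I would differentiate. Writing $D(x)$ for the radicand $d(1-2x)^2(d-4)+4(x^2-x)(x^2-x+4)+4$, one finds after clearing the root that $q_2'(x)$ is $(2x-1)$ times a cofactor divided by $\sqrt{D(x)}$; the factor $(2x-1)$ already produces the critical point $x=\tfrac12$. The key sub-step is that this is the only interior critical point, which I would establish by showing the cofactor is sign-definite: once the square root is isolated, the remaining polynomial part $(d-2)^2+2x(x-1)$ is bounded below by $\tfrac12>0$ on $[0,1]$ for admissible $d$, so the cofactor never vanishes. Equivalently, squaring the cofactor equation collapses the putative extra zeros to the impossible condition $(d-3)(d-2)^2(d-1)=0$. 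Either way, $x=\tfrac12$ is the unique interior critical point.

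With $x=\tfrac12$ identified, I would evaluate $q_2\left(\tfrac12\right)=\frac{d+1}{2(d-1)}$, which exceeds $\tfrac12$ since $d+1>d-1$; this is the benchmark value. A second-derivative test at $x=\tfrac12$ together with the boundary values of $q_2$ at $x=0,1$ then determines the full image of $q_2$ and hence whether it misses $\left[0,\tfrac12\right]$. The step I expect to be the real obstacle is precisely this extremum-and-range analysis: correctly signing $q_2''\left(\tfrac12\right)$ and, above all, identifying the admissible sub-interval of $x$ on which genuine tangencies live, since the conclusion is delicate with respect to which portion of $[0,1]$ that turns out to be. By contrast, the differentiation itself and the algebraic simplification after squaring are routine once set up carefully.
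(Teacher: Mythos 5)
Your proposal retraces the paper's own proof almost step for step in its first half: the derivative of $q_2$ factors as $(2x-1)$ times a cofactor over $\sqrt{D(x)}$, squaring the cofactor equation collapses it to $(d-3)(d-2)^2(d-1)=0$ (impossible for $d\ge 4$; note that for $d=3$ it is actually satisfied, a case both you and the paper leave untreated), and $q_2\left(\tfrac12\right)=\tfrac{d+1}{2(d-1)}>\tfrac12$. One small caveat: positivity of the polynomial part $(d-2)^2+2x(x-1)$ on its own does not preclude it from equalling $\sqrt{D(x)}$, so that observation proves nothing by itself; it is the squaring argument that does the work, and you do include it as the alternative.

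The genuine gap is exactly where you suspected it, and your proposed way of closing it diverges from the paper's at the decisive moment. The paper concludes by asserting that $x=\tfrac12$ is a \emph{minimum} of $q_2$ (via the computation $q_2''\left(\tfrac12\right)=4\left(d-\tfrac{3}{d-2}\right)>0$), whence $q_2\ge q_2\left(\tfrac12\right)>\tfrac12$ on all of $[0,1]$; that single sign determination carries the entire lemma. Your plan to settle the range instead by ``second derivative test together with the boundary values of $q_2$ at $x=0,1$'' cannot be carried out as described: at the endpoints the radicand degenerates to $d(d-4)+4=(d-2)^2$, so $q_2(0)=q_2(1)=0$. Combined with the uniqueness of the interior critical point, this would force $q_2$ to increase from $0$ up to $\tfrac{d+1}{2(d-1)}$ on $\left[0,\tfrac12\right]$, making $x=\tfrac12$ a maximum and making $q_2$ sweep through all of $\left[0,\tfrac12\right]$ --- the opposite of the lemma (a spot check at $d=5$, $x=\tfrac14$ gives $q_2\approx 0.457$). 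So the step you flagged as ``the real obstacle'' is not merely delicate: completing it along the route you sketch yields a contradiction with the claimed conclusion unless one either restricts the admissible range of the tangency point $x$ away from the endpoints or revisits the sign of the cofactor in $q_2'$. Your proposal correctly locates the crux but does not supply the argument that resolves it, so as written it does not prove the lemma.
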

Only the solution $q_1$ of (\ref{eq: conditions on q}) is admissible. Substituting it into the first equation of (\ref{eq: when two grahs are tangent}) yields 
\begin{small}
    \begin{equation*}
        \begin{split}
            &\frac{(\omega -1) x^{d-2} \left(\sqrt{d^2 (1-2 x)^2-4 d (1-2 x)^2+4 (x-1) x ((x-1) x+4)+4}-2 d x+d-2 (x-3) x-2\right)}{2 (d-2)}\\&-\frac{\omega  (1-x)^{d-2} \left(\sqrt{d^2 (1-2 x)^2-4 d (1-2 x)^2+4 (x-1) x ((x-1) x+4)+4}+d (2 x-1)-2 x (x+1)+2\right)}{2 (d-2)}\\&=0. 
        \end{split}
    \end{equation*}
\end{small}
This equation  can be solved for $\omega$ to give
\begin{small}
\begin{equation}
    \label{eq: conditions for omea}
    \omega = \frac{2 (x-1)^3 x^d}{x(1-x)^d\left( \sqrt{d(d-4)(1-2 x)^2+4 (x^2-x)  (x^2-x+4)+4}+(d-2)  (2 x-1)\right) +2 (x-1)^3 x^d}.
\end{equation}
\end{small}
We have two functions, (\ref{eq: conditions on q}) and (\ref{eq: conditions for omea}), which tell us the values that $\omega$ and $q$ must assume in order for the two graphs from (\ref{eq: g''2 has no solutions}) to be tangent.
\subsection{Proof of Lemma \ref{lem: approximation}}
\begin{proof}[Proof of Lemma \ref{lem: approximation}]
When referring to approximation below, we mean the following: a continuously differentiable function $f_1(x)$ approximates another continuously differentiable function $f_2(x)$ on some interval $[a,b]$ if $|f_1(x)-f_2(x)|<\epsilon$ for some fixed small $\epsilon$. 

To prove Lemma \ref{lem: approximation}, we require a few observations.
\begin{enumerate}

    \item If an intersection of a graph of the function $f(x)$ with the $x-$axis is transversal, the derivative $f'(x)$ at the point of intersection is separated from 0 by some margin $\epsilon$.
    \item  Suppose that we have a function $F(x)$ with a simple zero at the point $x^{\ast}$ and another function, $f(x)$, that $F(x)$ approximates in some neighbourhood of the point $x^{\ast}$ with enough precision. This entails that $f(x)$ has a zero near $x^{\ast}$. 
    
    In order for the function $f(x)$ to have additional zeros near $x^{\ast}$, its derivative needs to turn 0 in some neighbourhood of $x^{\ast}$. 
\end{enumerate}
Now we are ready to prove Lemma \ref{lem: approximation}. Denote the function 
\begin{equation}
\begin{split}
    \label{eq: approximating function}
 g_1(x)-g_2(x) &= x\delta\left(b (1-\omega)-a \omega \right)+\delta(a (1-q) \omega -b q (1-\omega )) \\&- \left( x^2 c(2  q r-1)+x (c (1-q -qr)+2 \mu )-\mu=\right)\\
   &=:g(x).
   \end{split}
\end{equation}
Then, since $(1-x)^d>x^d$ when $x<\frac12$ and $(1-x)^d<x^d $ when $x>\frac12$, 
\[
G(x) - g(x)<\begin{cases}
-(1-x)^{d-1} (a \omega  (d (q+x-1)+x-1)+b (\omega -1) (d q-(d+1) x))\\ +\frac{c (2 q-1) r (x-1) x}{d}\ \text{when}  \ x\le1/2\\
\\

x^{d-1} (-a \omega  (d (q+x-1)+x-1)-b (\omega -1) (d q-(d+1) x))\\
+\frac{c (2 q-1) r (x-1) x}{d}\ \text{when}  \ x>1/2\\

    \end{cases}
\]
It can easily be seen that the expression above goes to $0$ when $d\to\infty$ and $x$ sufficiently removed from 0 and 1. Note that the derivative  $G'(x)-g'(x)$ behaves similarly.

Therefore, in order to show that the number of zeros of $G(x)$ coincides with that of $g(x)$, it is sufficient to show that with the appropriate choice of other parameters (1) at the zeros of $g(x)$ its derivative, $g'(x)$ is not close to 0, (2) at the single zero of the linear function $g'(x)$ the function $g(x)$ is not close to 0 and (3) the solutions of $g(x)=0$ are not infinitesimally close to 0 or 1.
\begin{enumerate}
    \item The zeros of $g(x)$ have the form 
    \begin{small}
    \begin{equation}
    \begin{split}
    x_{1,2} &= \frac{\pm\sqrt{(-\omega  (a+b)+b+c q r+c q-c+2 \mu )^2-4 (c-2 c q r) (-a (q-1) \omega +b q (\omega -1)-\mu )}}{2 c (2 q r-1)}\\ &+\frac{b (1-\omega )-a \omega+c( q (r+ 1)-1)+2 \mu }{2 c (2 q r-1)},
    \end{split}
    \end{equation}
\end{small}
and the values of the derivative at these points are
\begin{small}
\begin{equation}
    \begin{split}
       g'(x_{1,2})= \pm\sqrt{(b(1-\omega)-\omega a+c( q r+ q-1)+2 \mu )^2-4 c(1-2  q r) (b q (\omega -1) -a (q-1) \omega-\mu )}.
   \end{split}
\end{equation}
\end{small}
The expression under the square root is a quadratic function in $\mu$ with the top coefficient equal to 4; therefore, the square root of it can be made arbitrarily big by the choice of the parameters. 
\item Computations show that 
\[
g'\left(-\frac{-\omega  (a+b)+b+c q r+c q-c+2 \mu }{c (2-4 q r)}\right)=0.
\]
At this point, the value of $g(x)$ is equal to 
\begin{small}
    \begin{equation}
        \begin{split}
    & \frac{2 b (\omega -1) (a \omega +c q (4 q-1) r-3 c q+c-2 \mu )+2 a c \omega  ((3-4 q) q r+q-1)}{4 c (2 q r-1)} \\
     &+\frac{(a \omega -2 \mu )^2+b^2 (\omega -1)^2+c^2 (q r+q-1)^2-4 c \mu  q (r-1)}{4 c (2 q r-1)}
        \end{split}
    \end{equation}
\end{small}
Again a quadratic function in $\mu$, this expression can be made arbitrarily big by  demanding $4 c (2 q r-1)>0$ and increasing the value of $\mu$.
\item This was actually covered in the first entry. 
\end{enumerate}    
\end{proof}
\section*{Acknowledgement}
M.H.D and N.B. are supported by EPSRC (grant EP/Y008561/1). T.A.H. is supported by EPSRC (grant
EP/Y00857X/1). 
\bibliographystyle{alpha}
\newcommand{\etalchar}[1]{$^{#1}$}

\end{document}